\tikzstyle{empty}=[circle,draw=black!80,thick]
\tikzstyle{emptyn}=[circle,draw=black!80,fill=white,scale=0.5] 
\tikzstyle{nero}=[circle,draw=black!80,fill=black!80,thick] 
\newcommand{\fS}{\mathfrak{S}}
\newcommand{\Irr}{{\operatorname{Irr}}}
\newcommand{\Syl}{\operatorname{Syl}}
\newcommand{\Lin}{\operatorname{Lin}}
\newcommand{\Char}{\operatorname{Char}}
\newcommand{\triv}{\mathbbm{1}}
\newcommand{\down}{\big\downarrow}
\newcommand{\up}{\big\uparrow}
\newcommand{\ccl}{\operatorname{ccl}}
\newcommand{\partition}{\operatorname{Part}}
\newcommand{\cut}{\operatorname{Cut}}
\newcommand{\Sum}{\textstyle\sum}
\newcommand{\itemspace}{\setlength\itemsep{5pt}}
\newtheorem{theorem}{Theorem}[section]
\newtheorem{lemma}[theorem]{Lemma}
\newtheorem{corollary}[theorem]{Corollary}
\newtheorem{proposition}[theorem]{Proposition}
\newtheorem{definition}[theorem]{Definition}
\theoremstyle{definition}
\newtheorem{example}[theorem]{Example}
\newtheorem{notation}[theorem]{Notation}
\newtheorem{remark}[theorem]{Remark}
\newtheorem{theoremA}{Theorem}
\begin{document}

\title[Linear characters of Sylow subgroups of $\fS_n$]{Linear characters of Sylow subgroups of symmetric groups}


\author{Eugenio Giannelli}
\address[E. Giannelli]{Dipartimento di Matematica e Informatica	U.~Dini, Viale Morgagni 67/a, Firenze, Italy}
\email{eugenio.giannelli@unifi.it}

\author{Stacey Law}
\address[S. Law]{Department of Pure Mathematics and Mathematical Statistics, University of Cambridge, Cambridge CB3 0WB, UK}
\email{swcl2@cam.ac.uk}

\author{Jason Long}
\address[J. Long]{Mathematical Institute, University of Oxford, Radcliffe Observatory, Andrew Wiles Building, Woodstock Rd, Oxford OX2 6GG, UK}
\email{jason.long@maths.ox.ac.uk}


\begin{abstract}
Let $p$ be any prime. Let $P_n$ be a Sylow $p$-subgroup of the symmetric group $\fS_n$. Let $\phi$ and $\psi$ be linear characters of $P_n$ and let $N$ be the normaliser of $P_n$ in $\fS_n$. In this article we show that the inductions of $\phi$ and $\psi$ to $\fS_n$ are equal if, and only if, $\phi$ and $\psi$ are $N$--conjugate. This is an analogue for symmetric groups of a result of Navarro for $p$-solvable groups.
\end{abstract}

\keywords{}

\subjclass[2020]{20C15, 20C30}

\maketitle

\section{Introduction}\label{sec:intro}

Let $G$ be a finite group and let $P$ be a Sylow subgroup of $G$.
The algebraic properties of those characters of $G$ obtained by inducing irreducible characters of $P$ encodes interesting information about the group $G$ itself \cite{NavSylSurvey}. For example, the normality of $P$ in $G$ is controlled by specific properties of the permutation character $\triv_{P}\big\uparrow^G$. More precisely, in \cite{MN12} it is proved that $P$ is normal in $G$ if, and only if, all the irreducible constituents of $\triv_{P}\big\uparrow^G$ have degree coprime to $p$. On the other hand, in \cite{NTV} it is shown that $P$ is self-normalising if, and only if, $\triv_G$ is the only constituent of $\triv_{P}\big\uparrow^G$ having degree coprime to $p$. 

Recently, the first two authors have studied the induction of characters of $P$ to $G$ in the case where $G=\fS_n$ is a symmetric group. The irreducible constituents of $\triv_{P}\big\uparrow^{\fS_n}$ were completely described in \cite{GL1}. Later in \cite{GL2}, the focus was extended to the study of any monomial character $\phi\big\uparrow^{\fS_n}$, where $\phi$ is an arbitrary linear character of $P\in\Syl_{p}(\fS_n)$. 
The present article complements the study begun in \cite{GL2} by extending to symmetric groups a result proved by Navarro for $p$-solvable groups \cite[Theorem C]{N}. 

Let $p$ be a prime. For any finite group $G$ and $P$ a Sylow $p$-subgroup of $G$, the normaliser $N=N_G(P)$ acts by conjugation on the set $\Lin(P)$ of linear characters of $P$. It is easy to see that if two characters are $N$--conjugate then their inductions to $N$, and hence $G$, are equal. Therefore, in order to study structural properties of such induced characters $\phi\up^G$, it is sufficient to consider only a set of representatives for the orbits of $N$ in its action on $\Lin(P)$. However, is the converse true? That is, if $\phi$ and $\psi$ are two linear characters of $P$ such that $\phi\up^G=\psi\up^G$, must $\phi$ and $\psi$ be $N$--conjugate?
As mentioned above, this was answered in the affirmative for all $p$-solvable groups by Navarro in \cite{N}, though there exist finite groups (such as $\text{PSL}(3,3)$ with $p=3$) for which the answer is negative. 
Our main result shows that the answer to this question is also affirmative for symmetric groups at all primes $p$.

\begin{theoremA}\label{thm:A}
Let $n\in\mathbb{N}$ and $p$ be any prime. Let $P_n\in\Syl_p(\fS_n)$ and let $N=N_{\fS_n}(P_n)$. Let $\phi$ and $\psi$ be linear characters of $P_n$. Then $\phi\up^{\fS_n}_{P_n}=\psi\up^{\fS_n}_{P_n}$ if, and only if, $\phi$ and $\psi$ are $N$--conjugate.
\end{theoremA}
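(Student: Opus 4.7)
The ``if'' direction is immediate: if $\psi={}^{g}\phi$ for some $g\in N$, then since $g$ normalises $P_n$ we have $\psi\up^{\fS_n}=\phi\up^{\fS_n}$. The plan for the converse is a nested induction, first reducing to the case $n=p^k$ via the direct product structure of Sylow subgroups of $\fS_n$, and then inducting on $k$ using the wreath product structure $P_{p^k}=P_{p^{k-1}}\wr C_p$ together with the detailed description of the constituents of $\phi\up^{\fS_n}$ obtained in \cite{GL2}.

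For the reduction, write the base-$p$ expansion $n=\sum_{i\ge 0}a_ip^i$, so $P_n\cong\prod_iP_{p^i}^{a_i}$ sits inside the Young subgroup $Y=\prod_i\fS_{a_ip^i}$, and every $\phi\in\Lin(P_n)$ factors as $\phi=\bigotimes_i\phi_i$ with $\phi_i\in\Lin(P_{p^i}^{a_i})$. The normaliser $N$ contains $\prod_i N_{\fS_{a_ip^i}}(P_{p^i}^{a_i})$, enlarged only by the block permutations within each $\fS_{a_i}$. Starting from $\phi\up^{\fS_n}=\psi\up^{\fS_n}$, I would restrict to $Y$ and use Mackey/Frobenius reciprocity to extract the per-factor identities $\phi_i\up^{\fS_{a_ip^i}}=\psi_i\up^{\fS_{a_ip^i}}$, and then carry out a second Sylow-theoretic decomposition inside each $(\fS_{p^i})^{a_i}$ to reduce to the case $a_i=1$, namely $n=p^k$.

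For the prime power case, the abelianisation yields a canonical identification $\Lin(P_{p^k})\cong\Lin(P_{p^{k-1}})\times\widehat{C_p}$: a linear character $\phi$ corresponds to a pair $(\alpha,t)$, where $\alpha$ is applied diagonally on the base group $P_{p^{k-1}}^{p}$ (these being the only $C_p$-invariant linear characters of the base) and $t$ is applied on the top $C_p$. A direct calculation shows that $N=N_{\fS_{p^k}}(P_{p^k})$ acts on this product through an action of $N_{\fS_{p^{k-1}}}(P_{p^{k-1}})$ on the $\alpha$-factor and an action of $C_{p-1}$ on $\widehat{C_p}$; consequently the $N$-orbits on $\Lin(P_{p^k})$ are the pairs (an $N_{\fS_{p^{k-1}}}(P_{p^{k-1}})$-orbit on $\Lin(P_{p^{k-1}})$, together with ``trivial versus non-trivial'' on $C_p$). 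Given $\phi\leftrightarrow(\alpha,t)$ and $\psi\leftrightarrow(\beta,s)$ with $\phi\up^{\fS_{p^k}}=\psi\up^{\fS_{p^k}}$, the aim is to establish two claims: (i) $\alpha$ and $\beta$ are $N_{\fS_{p^{k-1}}}(P_{p^{k-1}})$-conjugate, by restricting through the Young subgroup $(\fS_{p^{k-1}})^p$ and applying the inductive hypothesis to each tensor factor; and (ii) $t,s$ are either both trivial or both non-trivial, which should be readable from whether a particular ``top-level'' irreducible constituent appears in $\phi\up^{\fS_{p^k}}$.

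The hardest step is likely the inductive decoupling of $\alpha$ from $t$: the restriction of $\phi\up^{\fS_{p^k}}$ to $(\fS_{p^{k-1}})^{p}$ is a sum over Mackey double cosets, and extracting clean information about $\alpha$ without interference from $t$ (or from the permutation symmetry of the $p$ factors) is delicate. The description of constituents of $\phi\up^{\fS_n}$ in \cite{GL2} will be the key tool here: the goal is to identify an explicit family of irreducible constituents $\chi^\lambda$ whose multiplicities $[\phi\up^{\fS_n},\chi^\lambda]$ depend in a controllable way on the $N$-orbit of $\phi$, so that the multiset of multiplicities $([\phi\up^{\fS_n},\chi^\lambda])_\lambda$ over this family is a complete invariant of the $N$-orbit. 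A related subtlety in the reduction step is ensuring that the restriction from $\fS_n$ to $Y=\prod_i\fS_{a_ip^i}$ really forces genuine per-factor equalities rather than merely some averaged constraint; this too will appeal to the explicit constituent information of \cite{GL2}.
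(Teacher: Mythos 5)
Your \emph{if} direction is fine, and the outermost reduction along the $p$-adic expansion of $n$ is broadly the paper's strategy too (the paper implements the ``cancellation of the common factor'' via a Littlewood--Richardson argument, Lemma~\ref{lem:C3}, together with an evaluation at well-chosen elements $\sigma g$ using Lemma~\ref{lem: C6} and the non-vanishing result Corollary~\ref{cor:non-zero}, rather than by Mackey restriction to the Young subgroup --- your own closing caveat about ``averaged constraints'' is exactly why a bare restriction argument is not enough). The fatal problem is the next step: you cannot reduce from $n=ap^k$ with $a\in\{2,\dots,p-1\}$ to $n=p^k$. Inside $\fS_{ap^k}$ the $a$ direct factors $P_{p^k}$ of the Sylow subgroup are permuted by $N_{\fS_{ap^k}}(P_{ap^k})$ (and by $\fS_{ap^k}$ itself), so writing $\phi=\phi_1\times\cdots\times\phi_a$ and $\psi=\psi_1\times\cdots\times\psi_a$, the hypothesis $\phi\up^{\fS_{ap^k}}=\psi\up^{\fS_{ap^k}}$ cannot yield per-factor identities $\phi_i\up^{\fS_{p^k}}=\psi_i\up^{\fS_{p^k}}$ --- indeed $(A\times B)\up=(B\times A)\up$ for any $A,B$ --- but at best an equality of the \emph{multisets} of $N_{\fS_{p^k}}(P_{p^k})$-orbits of the $\phi_i$ and of the $\psi_j$. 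Establishing that multiset equality is the actual content of the paper, and it is not a formality: Remark~\ref{rmk:a>1} exhibits, for $a=2$ and $k=3$, parameters $\underline{\mathbf{s}}=\big((1,0,0),(0,1,1)\big)$ and $\underline{\mathbf{t}}=\big((1,0,1),(0,1,0)\big)$ whose induced characters agree on every class of cycle type $p^l$ --- that is, on all the data the $n=p^k$ analysis controls --- and yet are neither equal nor $N$-conjugate. So no amount of information imported from the $a=1$ case can settle $a\ge 2$.

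The missing machinery is a genuinely new set of invariants for $a\ge 2$: the paper evaluates $\phi\up^{\fS_{ap^k}}$ on products of disjoint cycles of \emph{distinct} $p$-power lengths $p^{l_1}<\cdots<p^{l_b}$ (Theorem~\ref{thm:general-a}, whose proof needs Bell-polynomial and Stirling-number identities to untangle the resulting sums), plus one extra computation on cycle type $p^lp^l$ (Lemma~\ref{lem: claim 7}) to exclude a degenerate case of a signed power-sum lemma; it then converts these equations into equalities of multisets of sums of partial sums of the parametrising $0$--$1$ sequences (Proposition~\ref{prop: claim 8}) and finishes with a non-trivial combinatorial argument (Theorem~\ref{thm: part2}) showing those multiset equalities force $\underline{\mathbf{s}}$ to be a permutation of $\underline{\mathbf{t}}$. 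Your proposal contains no substitute for any of this. (For the genuine prime-power case $a=1$ your wreath-product induction is plausible but harder than necessary: once one knows the values on single $p^l$-cycles for all $l$, the result falls out in two lines, as in the first paragraph of Remark~\ref{rmk:a>1}.)
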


In order to prove Theorem~\ref{thm:A}, we calculate the values of the induced characters $\phi\up^{\fS_n}$ and $\psi\up^{\fS_n}$ on a range of different conjugacy classes of $S_n$. By relating these values to combinatorial properties of $\phi$ and $\psi$, we demonstrate that if the induced characters agree on sufficiently many conjugacy classes, then in fact the linear characters $\phi$ and $\psi$ are $N$--conjugate.
We believe that the closed formulae we provide in Lemma \ref{lem: b=1}, Proposition \ref{prop: b>1} and Theorem \ref{thm:general-a} may be useful for future investigations, and could be of independent interest. 
	
\subsection{Structure of the article}

We begin with a reduction of Theorem~\ref{thm:A} to the case where $n=ap^k$ for some $a\in\{1,2,\dots,p-1\}$ and $k\in\mathbb{N}$. This is stated separately as Theorem~\ref{thm: apk}. In Section~\ref{sec:reduction}, we prove Theorem~\ref{thm:A} assuming Theorem~\ref{thm: apk}.

In Section~\ref{sec:newnotation} we 
describe a certain parametrisation $\phi=\phi(\underline{\mathbf{u}})$ of the linear characters of $P_n$ and formulate what it means for linear characters to be $N$--conjugate in terms of this parametrisation. We then state Theorem~\ref{thm: apk restated} which re-expresses Theorem~\ref{thm: apk} using this new notation, and whose proof will be the main aim of the remaining sections of the paper.

Section~\ref{sec:char values} represents the technical core of the argument. Here, we tackle the computation of induced character values, culminating in Theorem~\ref{thm:general-a}. While it will quickly become clear that induced character $\phi\up^{\fS_n}$ can be calculated in a recursive fashion, the main difficulty will be in simplifying the resulting expressions so that they can be used to analyse the parametrisation $\phi(\underline{\mathbf{u}})$. To achieve this, we make use of a number of combinatorial tools, including identities concerning Bell polynomials and Stirling numbers of the second kind.

In Section~\ref{sec:final} we shall use the induced character formulae from Section~\ref{sec:char values} to establish Theorem~\ref{thm: apk restated}, and hence Theorem~\ref{thm:A}.

In Appendix~\ref{sec:appendix}, we provide detailed examples to clarify some of the expressions in Section~\ref{sec:char values}. 
The paper is self-contained and the examples are not necessary to follow the proof, but the reader may find it helpful to refer to the appendix when reading the proof of Proposition~\ref{prop: b>1}.

\subsection*{Acknowledgements}
We thank the anonymous reviewer for his/her helpful corrections.
The second author was supported by a London Mathematical Society Early Career Fellowship at the University of Oxford (ECF-1819-05).

\bigskip

\section{Reduction to $n=ap^k$}\label{sec:reduction}

We begin by introducing some notation used throughout the article.
We refer the reader to \cite{JK} for a detailed account of the representation theory of symmetric groups. 

For a finite group $G$, let $\Char(G)$ denote the set of ordinary characters of $G$. Let $\Irr(G)$ the subset of those which are irreducible, and $\Lin(G)$ for those which are linear, i.e.~of degree 1. 
For $m$ a natural number, let $[m]$ denote the set $\{1,2,\dotsc,m\}$. 
The symmetric group on the set $[n]=\{1,2,\dotsc,n\}$ will be denoted by $\fS_n$ throughout. 

\smallskip

First, we record a proof of the \textit{easy} direction of Theorem A.

\begin{lemma}\label{lem:easy direction}
	Let $G$ be a finite group and $p$ be a prime. Let $P\in\Syl_p(G)$ and $N=N_G(P)$. Suppose $\phi,\psi\in\Char(P)$ and $\psi=\phi^m$ for some $m\in N$. Then $\phi\up^G_P=\psi\up^G_P$.
\end{lemma}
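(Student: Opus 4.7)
The plan is to apply the Frobenius induction formula directly to $\psi\up^G_P$ and then reindex the sum using the normalising element $m$. Adopting the convention $\phi^m(p) = \phi(m^{-1}pm)$ for $p \in P$ (well-defined since $m$ normalises $P$), the Frobenius formula gives
\[
\psi\up^G_P(g) = \frac{1}{|P|}\sum_{\substack{x \in G \\ x^{-1}gx \in P}} \phi(m^{-1}x^{-1}gxm) = \frac{1}{|P|}\sum_{\substack{x \in G \\ x^{-1}gx \in P}} \phi\bigl((xm)^{-1} g (xm)\bigr)
\]
for each $g \in G$. I would then substitute $y = xm$: the variable $y$ ranges over $G$ as $x$ does, and the condition $x^{-1}gx \in P$ translates to $y^{-1}gy \in m^{-1}Pm = P$, while the summand becomes $\phi(y^{-1}gy)$. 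The sum therefore collapses to exactly $|P|\cdot \phi\up^G_P(g)$, yielding the desired equality of induced characters.

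An equivalent and more conceptual route is to invoke the standard fact that induction depends only on the $G$-conjugacy class of the pair $(H,\phi)$: for any subgroup $H\le G$, character $\phi$ of $H$, and element $g\in G$, one has $\phi\up^G_H = (\phi^g)\up^G_{H^g}$. Specialising to $g = m \in N$ gives $P^m = P$ and $\phi^m = \psi$ by hypothesis, so $\phi\up^G_P = \psi\up^G_P$ at once. There is no real obstacle here; the only substantive point is that the conjugating element lies in $N$, which guarantees that $\psi = \phi^m$ is a character of $P$ itself (rather than of some other conjugate Sylow subgroup) and that the change of variables above preserves the indexing set of the Frobenius sum.
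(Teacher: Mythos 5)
Your proof is correct, but it takes a different route from the paper. You work directly with the explicit Frobenius induction formula, reindexing the sum by $y=xm$ and using $m\in N$ only to guarantee that $P^m=P$, so that the condition $x^{-1}gx\in P$ transforms into $y^{-1}gy\in P$ and the index set is preserved; your second remark (that induction depends only on the $G$-conjugacy class of the pair $(H,\phi)$) is just the same computation stated abstractly. The paper instead passes through the intermediate group $N$: for any $\alpha\in\Irr(N)$ one has $\alpha^m=\alpha$ since $m\in N$, so Frobenius reciprocity gives $\langle\phi\up^N,\alpha\rangle=\langle\phi,\alpha\down_P\rangle=\langle\phi^m,(\alpha\down_P)^m\rangle=\langle\psi\up^N,\alpha\rangle$, whence $\phi\up^N=\psi\up^N$ and the equality over $G$ follows by inducing once more. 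The paper's argument actually proves the slightly stronger intermediate statement $\phi\up^N=\psi\up^N$ (which is where the $N$-conjugacy hypothesis is most naturally at home, and which is the form referenced in the introduction), while yours is more elementary and self-contained, needing nothing about characters of $N$. Both are complete; there is no gap in your argument.
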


\begin{proof}
	Let $\alpha\in\Irr(N)$. Note that $\alpha^m=\alpha$. Then by Frobenius reciprocity,
	$$\langle \phi\up^N,\alpha\rangle = \langle \phi, \alpha\down_P\rangle = \langle \phi^m, (\alpha\down_P)^m \rangle = \langle \psi, (\alpha^m)\down_P\rangle = \langle \psi\up^N, \alpha\rangle.$$
	Thus $\phi\up^N=\psi\up^N$, as $\alpha$ is arbitrary. It follows that $\phi\up^G_P=\psi\up^G_P$.
\end{proof}

The \textit{difficult} part of Theorem~\ref{thm:A} is the converse of Lemma~\ref{lem:easy direction}. It turns out that this converse reduces to the case when $n=ap^k$, which we now state.

\begin{theorem}\label{thm: apk}
	Let $a\in [p-1]$ and $k\in\mathbb{N}$. Let $\phi,\psi\in\mathrm{Lin}(P_{ap^k})$ be such that $\phi\up^{\fS_{ap^k}}=\psi\up^{\fS_{ap^k}}$. Then $\phi$ and $\psi$ are $N_{\fS_{ap^k}}(P_{ap^k})$--conjugate. 
\end{theorem}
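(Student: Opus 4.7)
The plan is to recover the linear characters $\phi$ and $\psi$ from the values of their $\fS_{ap^k}$-inductions on carefully chosen conjugacy classes, exploiting the iterated wreath product structure of $P_{p^k}$ together with the direct product decomposition of $P_{ap^k}$.

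Since $a\le p-1$, we have $P_{ap^k}\cong P_{p^k}\times\cdots\times P_{p^k}$ (with $a$ direct factors), and $P_{p^k}$ is the $k$-fold iterated wreath product $C_p\wr\cdots\wr C_p$. The abelianisation of $P_{p^k}$ is elementary abelian of rank $k$, so a linear character of $P_{p^k}$ is specified by a vector $\mathbf{u}\in\mathbb{F}_p^k$ (one coordinate per wreath layer), and hence a linear character of $P_{ap^k}$ by a tuple $\underline{\mathbf{u}}=(\mathbf{u}_1,\dotsc,\mathbf{u}_a)\in(\mathbb{F}_p^k)^a$. The normaliser $N=N_{\fS_{ap^k}}(P_{ap^k})$ permutes the $a$ factors as $\fS_a$ and acts within each factor through $N_{\fS_{p^k}}(P_{p^k})/P_{p^k}$, producing an explicit equivalence relation on such tuples; spelling this out precisely (the content flagged in Section~\ref{sec:newnotation}) is the first step, so that $N$-conjugacy becomes a combinatorial condition on $\underline{\mathbf{u}}$.

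The heart of the argument is to evaluate $\phi\up^{\fS_{ap^k}}$ on elements $g\in\fS_{ap^k}$ whose cycle type consists entirely of cycles of length $p^j$ for suitably chosen $j\in[k]$. By the Frobenius formula,
\[ \phi\up^{\fS_{ap^k}}(g) \;=\; \frac{1}{|P_{ap^k}|}\sum_{\substack{x\in\fS_{ap^k}\\ x^{-1}gx\in P_{ap^k}}}\phi(x^{-1}gx), \]
and because $g$ is a $p$-element, its conjugates lying in $P_{ap^k}$ organise naturally according to how its $p^j$-cycles are distributed across the $a$ direct factors and, within each factor, across the levels of the wreath-product tower. This recursion produces nested sums of products of the coordinates of $\underline{\mathbf{u}}$ which, to be used, must be brought into a usable closed form: I would expect the inner sums to be recognisable as evaluations of Bell polynomials that are then reducible via Stirling-number-of-the-second-kind identities, exactly the combinatorial package flagged by the authors as the content of Lemma~\ref{lem: b=1}, Proposition~\ref{prop: b>1} and Theorem~\ref{thm:general-a}.

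With closed-form character values in hand, I would conclude by a layer-by-layer induction. Testing on classes of cycle type $(p)^{ap^{k-1}}$ first isolates the top-layer coordinates of each $\mathbf{u}_i$ and forces the multisets of these top coordinates to agree for $\phi$ and $\psi$, up to an $\fS_a$-permutation of the direct factors. Conditioning on this matching, testing next on cycle types built from $p^2$-cycles pins down the next layer, and so on down to $j=1$. The main obstacle is precisely this inductive pass: the closed formula of Theorem~\ref{thm:general-a} entangles coordinates from all factors simultaneously, so extracting per-factor information and matching the residual freedom exactly with the intra-factor normaliser action requires delicate bookkeeping to peel off cross terms. Once that matching is achieved, the equality $\phi\up^{\fS_{ap^k}}=\psi\up^{\fS_{ap^k}}$ will have forced $\underline{\mathbf{u}}_\phi$ and $\underline{\mathbf{u}}_\psi$ into a common $N$-orbit, yielding the theorem.
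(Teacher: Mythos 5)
Your high-level strategy (parametrise $\Lin(P_{ap^k})$ by tuples of vectors, evaluate the induced characters on $p$-power cycle types via the Frobenius formula and the wreath-product recursion, then extract the parameters combinatorially) matches the paper's, but two of your concrete choices create genuine gaps. The first is your choice of test classes. You propose elements whose cycle type consists \emph{entirely} of cycles of length $p^j$, e.g.\ type $(p)^{ap^{k-1}}$, and later "cycle types built from $p^2$-cycles". Such elements have no fixed points, so inside a factor $P_{p^k}=P_{p^{k-1}}\wr P_p$ their conjugates $(f_1,\dotsc,f_p;\sigma)$ can have $\sigma\ne 1$ at every level of the tower; the sum over $\ccl(g)\cap P_{ap^k}$ then genuinely involves the top group of the wreath product at each layer, and the contributions from $\sigma=1$ (depending on layers $1,\dotsc,k-1$) and $\sigma\ne 1$ (depending on layer $k$) mix. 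Your claim that testing on $(p)^{ap^{k-1}}$ "isolates the top-layer coordinates" is therefore unjustified and, as far as I can see, false. The paper avoids exactly this by restricting to cycle types $p^{l_1}\cdots p^{l_b}$ with \emph{distinct} exponents $l_1<\cdots<l_b$ and $b\le p-1$: such an element either is a single $p^k$-cycle (handled by Lemma~\ref{lem: pk cycles}) or has a fixed point, which forces $\sigma=1$ and confines the recursion to the base group (Remark~\ref{rem: newremark}). The only repeated-length class used is $p^lp^l$ (Lemma~\ref{lem: claim 7}), where two cycles still leave a fixed point when $p$ is odd.

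The second gap is the endgame. Even granting closed formulae, what the equality of induced characters yields is, for each admissible $\{l_1,\dotsc,l_b\}$, an identity $\sum_{j=1}^a\prod_i C_{l_i}(\mathsf{s}^j)=\sum_{j=1}^a\prod_i C_{l_i}(\mathsf{t}^j)$, i.e.\ an equality of sums of signed powers $\sum_j(-(p-1))^{\sigma_j}=\sum_j(-(p-1))^{\tau_j}$ where the exponents are sums of partial sums of the binary strings. Passing from this to equality of the \emph{multisets} of exponents is not automatic: there is an exceptional configuration (case (ii) of Lemma~\ref{lem: signed powers}) that must be excluded, and the paper needs the extra $p^lp^l$ computation precisely for this. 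Then one still needs the purely combinatorial argument (Lemma~\ref{lem:keylemma} and Theorem~\ref{thm: part2}) showing that equality of the multisets $f(l_1,\dotsc,l_b;\underline{\mathbf{s}})=f(l_1,\dotsc,l_b;\underline{\mathbf{t}})$ for all choices with $b\le a$ forces $\underline{\mathbf{s}}$ to be a permutation of $\underline{\mathbf{t}}$; your "delicate bookkeeping to peel off cross terms" does not supply this, and a naive layer-by-layer matching fails because the available identities only control symmetric data across the $a$ factors, not per-factor data. These two points are the substance of the proof, so the proposal as written does not close.
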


\noindent Theorem~\ref{thm: apk} is proved in Section~\ref{sec:final}, following explicit calculations of certain induced character values in Section~\ref{sec:char values}.
In the rest of this section, we deduce Theorem~\ref{thm:A} from Theorem~\ref{thm: apk}.

\medskip

Recall that for each $n\in\mathbb{N}$, $\Irr(\fS_n)$ is naturally in bijection with the set of all partitions of $n$. For a partition $\lambda$ of the number $n$, also written $\lambda\vdash n$, we denote the corresponding irreducible character of $\fS_n$ by $\chi^\lambda$. 

\begin{lemma}\label{lem:C3}
	Let $n,m\in\mathbb{N}$. Let $A,B\in\Char(\fS_n)$ and $Z\in\Char(\fS_m)$. Then 
	$$(A\times Z)\big\uparrow^{\fS_{n+m}}_{\fS_n\times\fS_m}=(B\times Z)\big\uparrow^{\fS_{n+m}}_{\fS_n\times\fS_m}\ \ \text{if and only if}\ \ A=B.$$ 
\end{lemma}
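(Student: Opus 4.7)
The forward implication ($A = B \Rightarrow$ equal induced characters) is immediate from linearity of the induction functor, so the content of the lemma lies entirely in the converse.

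For the converse, the plan is to translate the statement into the ring of symmetric functions $\Lambda$ via the Frobenius characteristic map. Recall that there is an isometric graded ring isomorphism
\[
\mathrm{ch}:\ \bigoplus_{n \geq 0} R(\fS_n) \ \longrightarrow\ \Lambda, \qquad \chi^\lambda \longmapsto s_\lambda,
\]
(see e.g.~\cite{JK}), where $R(\fS_n)$ denotes the virtual character ring of $\fS_n$, the $s_\lambda$ are the Schur functions, and crucially the multiplication on the left-hand side is the induction product $(X,Y) \mapsto (X \times Y)\up^{\fS_{n+m}}_{\fS_n \times \fS_m}$. Applying $\mathrm{ch}$ to both sides, the hypothesis of the lemma becomes the single identity $\mathrm{ch}(A)\cdot\mathrm{ch}(Z) \;=\; \mathrm{ch}(B)\cdot\mathrm{ch}(Z)$ in $\Lambda$.

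The remaining step is a cancellation argument. Since $\Lambda$ is an integral domain — for instance, $\Lambda_{\mathbb{Q}}$ is isomorphic to the polynomial ring $\mathbb{Q}[p_1, p_2, \ldots]$ on the power-sum symmetric functions — and since $\mathrm{ch}(Z) \neq 0$ because $Z$ is a nonzero character, we may cancel $\mathrm{ch}(Z)$ to obtain $\mathrm{ch}(A) = \mathrm{ch}(B)$. Injectivity of $\mathrm{ch}$ then forces $A = B$.

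No serious obstacle is anticipated: every ingredient (the characteristic map, its compatibility with induction from Young subgroups, and the fact that $\Lambda$ is a domain) is standard and is contained in the reference \cite{JK} already cited by the paper. The only minor point to be careful about is that $Z \in \Char(\fS_m)$ is understood to be nonzero — otherwise the statement is vacuously false, since both inductions would vanish. An alternative, more pedestrian route would be to expand everything in the Schur basis, apply Frobenius reciprocity together with the Littlewood--Richardson branching rule, and show that the resulting linear system in the coefficients $\langle A, \chi^\lambda \rangle - \langle B, \chi^\lambda \rangle$ has only the trivial solution; but this ultimately rests on the same integral-domain fact, so the symmetric-function version is cleaner.
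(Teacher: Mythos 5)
Your argument is correct, but it is genuinely different from the one in the paper. You pass through the Frobenius characteristic map, turn the hypothesis into the identity $\mathrm{ch}(A)\,\mathrm{ch}(Z)=\mathrm{ch}(B)\,\mathrm{ch}(Z)$ in the ring of symmetric functions, and cancel $\mathrm{ch}(Z)$ using the fact that $\Lambda_{\mathbb{Q}}\cong\mathbb{Q}[p_1,p_2,\dots]$ is an integral domain; injectivity of $\mathrm{ch}$ on virtual characters then gives $A=B$. The paper instead argues by contradiction directly with the Littlewood--Richardson rule: it takes the lexicographically greatest $\overline{\lambda}$ with $\langle A,\chi^{\overline{\lambda}}\rangle\neq\langle B,\chi^{\overline{\lambda}}\rangle$ and the lexicographically greatest $\overline{\mu}$ in the support of $Z$, and shows that the two induced characters have different multiplicities at $\chi^{\overline{\lambda}+\overline{\mu}}$, since the coefficient $c^{\overline{\lambda}+\overline{\mu}}_{\overline{\lambda}\,\overline{\mu}}$ equals $1$ and all other contributing $\lambda$ are lexicographically larger than $\overline{\lambda}$, where $A$ and $B$ agree. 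The two proofs are essentially the same fact seen at different levels of abstraction --- the paper's extremal-partition argument is in effect a hands-on proof that $\Lambda$ has no zero divisors (via leading terms) --- so your version is shorter and more conceptual at the cost of invoking the characteristic-map machinery, while the paper's is self-contained modulo the Littlewood--Richardson rule it already cites. You are also right to flag that $Z$ must be nonzero; the paper uses this implicitly when asserting that the set $\mathcal{N}$ of constituents of $Z$ is non-empty, consistent with the convention that a character is the trace of a nonzero representation.
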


\begin{proof}
	The `if' direction is clear, so now suppose that $(A\times Z)\big\uparrow^{\fS_{n+m}}=(B\times Z)\big\uparrow^{\fS_{n+m}}$ and assume for a contradiction that $A\neq B$. 
	For $X\in\{A,B,Z\}$, we let $c^X_\lambda:=\left\langle X, \chi^\lambda\right\rangle$, where $\lambda$ is a partition of $n$ (respectively $m$) if $X\in\{A,B\}$ (respectively $X=Z$).
	We define the following sets:
	$$\mathcal{M}=\{\lambda\vdash n\ |\ c_\lambda^A\neq c_\lambda^B\}\ \ \text{and}\ \ \mathcal{N}=\{\mu\vdash m\ |\ c_\mu^Z\neq 0\},$$
	which by assumption are non-empty.
	Let $\overline{\lambda}$ and $\overline{\mu}$ be the lexicographically greatest partitions in $\mathcal{M}$ and $\mathcal{N}$ respectively, and let $\alpha$ be the partition of $n+m$ defined by $\alpha=\overline{\lambda}+\overline{\mu}:=(\lambda_1+\mu_1,\lambda_2+\mu_2,\ldots )$.
	By the Littlewood\textendash Richardson rule \cite[Chapter 16]{J}, we have that 
	\begin{align*}
	\left\langle(A\times Z)\uparrow^{\fS_{n+m}}, \chi^\alpha\right\rangle &= c_{\overline{\lambda}}^A c_{\overline{\mu}}^Z c_{\overline{\lambda}\overline{\mu}}^{\alpha} + \sum_{\lambda>\overline{\lambda}} \sum_{\mu\in \mathcal{N}} c_{\lambda}^A c_{\mu}^Z c_{\lambda\mu}^{\alpha} = c_{\overline{\lambda}}^A c_{\overline{\mu}}^Z c_{\overline{\lambda}\overline{\mu}}^{\alpha} + \sum_{\lambda>\overline{\lambda}} \sum_{\mu\in \mathcal{N}} c_{\lambda}^B c_{\mu}^Z c_{\lambda\mu}^{\alpha}\\ 
	&\neq c_{\overline{\lambda}}^B c_{\overline{\mu}}^Z c_{\overline{\lambda}\overline{\mu}}^{\alpha} + \sum_{\lambda>\overline{\lambda}} \sum_{\mu\in \mathcal{N}} c_{\lambda}^B c_{\mu}^Z c_{\lambda\mu}^{\alpha} = \left\langle(B\times Z)\uparrow^{\fS_{n+m}}, \chi^\alpha\right\rangle.
	\end{align*}
	This is a contradiction, and the proof is concluded. 
\end{proof}

\begin{lemma}\label{lem: C6}
	Let $b, n, m\in \mathbb{N}$ with $n>m$. Let $P\times Q\leq \fS_{bn}\times\fS_m\leq \fS_{bn+m}$ be such that $P$ contains an element $\sigma$ 
	which is a product of $b$ disjoint $n$-cycles. Let $g\in Q$. Let $\chi$ and $\eta$ be class functions of $P$ and $Q$ respectively. Then 
	$$(\chi\times\eta)\big\uparrow_{P\times Q}^{\fS_{bn+m}}(\sigma g)=\chi\big\uparrow_P^{\fS_{bn}}(\sigma)\cdot\eta\big\uparrow_Q^{\fS_m}(g).$$
\end{lemma}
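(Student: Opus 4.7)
The plan is to apply the standard formula for the induction of a class function: for $H \leq G$ and $f$ a class function on $H$,
$$f\uparrow_H^G(x) = \frac{1}{|H|} \sum_{\substack{t \in G \\ t^{-1}xt \in H}} f(t^{-1}xt).$$
I would invoke this with $G = \fS_{bn+m}$, $H = P \times Q$ and $x = \sigma g$. Since the two chains $P \times Q \leq \fS_{bn} \times \fS_m \leq \fS_{bn+m}$ are given, the natural intermediate subgroup $K := \fS_{bn} \times \fS_m$ is the setwise stabiliser in $\fS_{bn+m}$ of the partition $[bn] \sqcup \{bn+1,\ldots,bn+m\}$, and this is where the cycle-structure analysis will happen.

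The key step, and the only place where the hypothesis $n > m$ is used, is to show that any $t \in \fS_{bn+m}$ with $t^{-1} \sigma g\, t \in P \times Q$ must already lie in $K$. To establish this I would argue as follows. The element $\sigma g$ has exactly $b$ cycles of length $n$, all supported on $[bn]$. Every cycle of $g \in \fS_m$ has length at most $m < n$, so for any element of $K$ conjugate to $\sigma g$, its $n$-cycles must be supported entirely within $[bn]$; since these $b$ cycles account for all $bn$ such points, their support is exactly $[bn]$. Applied to the conjugate $h := t^{-1} \sigma g\, t$, whose $n$-cycles are precisely the images under $t^{-1}$ of the $n$-cycles of $\sigma g$, this yields $t^{-1}([bn]) = [bn]$, and hence $t \in K$.

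Granted this reduction, I would then write $t = (t_1, t_2) \in \fS_{bn} \times \fS_m$. Because $\sigma$ and $g$ commute and move disjoint sets of points, conjugation splits as $t^{-1}\sigma g\, t = (t_1^{-1} \sigma t_1)(t_2^{-1} g t_2)$, and membership in $P \times Q$ decouples into $t_1^{-1}\sigma t_1 \in P$ together with $t_2^{-1} g t_2 \in Q$. Evaluating $\chi \times \eta$ on this product gives the product $\chi(t_1^{-1}\sigma t_1)\,\eta(t_2^{-1} g t_2)$, so the induction sum factors as
$$\frac{1}{|P|\cdot|Q|}\Bigg(\sum_{\substack{t_1\in\fS_{bn}\\ t_1^{-1}\sigma t_1 \in P}} \chi(t_1^{-1}\sigma t_1)\Bigg)\Bigg(\sum_{\substack{t_2\in\fS_m\\ t_2^{-1}g t_2 \in Q}} \eta(t_2^{-1}g t_2)\Bigg) = \chi\up_P^{\fS_{bn}}(\sigma)\cdot \eta\up_Q^{\fS_m}(g),$$
which is exactly the claim.

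The only non-routine part of the argument is the cycle-structure step in the second paragraph, where one pins down the support of the $n$-cycles of a conjugate lying in $K$; everything that follows is a mechanical factorisation of the induction double sum.
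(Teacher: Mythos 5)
Your proposal is correct and is essentially the paper's argument: the paper's one-line proof ("this follows from the definition of induction, after observing that the conjugate lies in $P\times Q$ only if it lies in $P$") is exactly the cycle-structure observation you spell out, namely that since every cycle of $g$ has length at most $m<n$, the $b$ cycles of length $n$ in any conjugate of $\sigma g$ landing in $P\times Q$ must fill up $[bn]$, forcing the conjugating element into $\fS_{bn}\times\fS_m$ and letting the induction sum factorise. No gaps.
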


\begin{proof}
	This follows from the definition of induction, after observing that $\sigma^x\in P\times Q$ if and only if $\sigma^x\in P$, for all $x\in\fS_{bn+m}$. 
\end{proof}

Assuming Theorem~\ref{thm: apk}, we now prove Theorem A.

\begin{proof}[Proof of Theorem A]
	Recall that Lemma~\ref{lem:easy direction} gives the easy direction of Theorem A. We now prove the difficult direction. Let $n=a_1p^{n_1}+\cdots+a_tp^{n_t}$ be the $p$-adic expansion of $n$, with $t\in\mathbb{N}$, $a_i\in[p-1]$ for all $i$ and $0\le n_1<\cdots<n_t$. Suppose $\phi\up^{\fS_n}_{P_n}=\psi\up^{\fS_n}_{P_n}$. We wish to show that $\phi$ and $\psi$ are $N_{\fS_n}(P_n)$--conjugate.
	
	We proceed by induction on $t$, noting that Theorem~\ref{thm: apk} gives the case of $t=1$. Now suppose $t\geq 2$ and assume for a contradiction that $\phi$ and $\psi$ are not $N_{\fS_{n}}(P_{n})$--conjugate. Let $m=a_tp^{n_t}$ and write $\phi=\phi_1\times\phi_2$ and $\psi=\psi_1\times\psi_2$ where $\phi_1,\psi_1\in\mathrm{Lin}(P_m)$ and $\phi_2,\psi_2\in\mathrm{Lin}(P_{n-m})$.
	Since $\phi$ and $\psi$ are not $N_{\fS_{n}}(P_{n})$--conjugate and $N_{\fS_n}(P_n)\cong 
	N_{\fS_m}(P_m)\times N_{\fS_{n-m}}(P_{n-m})$ (see \cite[\textsection 2.3.2]{SLThesis}, for instance), at least one of the following two statements must hold: 
	
	\begin{itemize}
		\item[(i)] $\phi_1$ and $\psi_1$ are not $N_{\fS_m}(P_m)$--conjugate; 
		\item[(ii)] $\phi_2$ and $\psi_2$ are not $N_{\fS_{n-m}}(P_{n-m})$--conjugate. 
	\end{itemize}
	Since $P_n\cong P_m\times P_{n-m}$, we have that 
	\[ (\phi_1\up^{\fS_m}_{P_m}\times\phi_2\up^{\fS_{n-m}}_{P_{n-m}})\up^{\fS_n} 
	= \phi\up^{\fS_n}_{P_n} = \psi\up^{\fS_n}_{P_n} = (\psi_1\up^{\fS_m}_{P_m}\times\psi_2\up^{\fS_{n-m}}_{P_{n-m}})\up^{\fS_n} \]
	and so using Lemmas~\ref{lem:easy direction}, \ref{lem:C3} and the inductive hypothesis, we deduce that both conditions (i) and (ii) must hold. 
	Let $g\in \fS_{n-m}$ be such that $\phi_2\up^{\fS_{n-m}}(g)\ne\psi_2\up^{\fS_{n-m}}(g)$; such an element exists by the inductive hypothesis. Let $\sigma\in P_m\le \fS_m$ be a product of $a_t$ disjoint $p^{n_t}$-cycles. We now denote by $h$ the element of $\fS_m\times\fS_{n-m}\le \fS_n$ defined as follows:
	\[ h = \begin{cases} 
	\sigma & \mathrm{if}\ \phi_1\up^{\fS_m}(\sigma)\ne \psi_1\up^{\fS_m}(\sigma),\\
	\sigma g & \mathrm{otherwise}.\end{cases} \]
	We remark that $\phi\up^{\fS_m}(\sigma)\ne 0$ for all $\phi\in\Lin(P_m)$: this follows from Corollary~\ref{cor:non-zero} below. (The proof could be included here, but it will be much shorter to describe following the introduction of relevant notation and ideas later.)
	Then $\phi\up^{\fS_n}(h)\ne \psi\up^{\fS_n}(h)$ by Lemma~\ref{lem: C6}, a contradiction.
\end{proof}

\bigskip

\section{Wreath products and the structure of the Sylow subgroup}\label{sec:newnotation}
We recall some basic facts concerning wreath products and the representation theory of the Sylow subgroups $P_n$ of $\fS_n$ (see \cite[Chapter 4]{JK} for further detail). In Section~\ref{subsec:param} we describe a parametrisation of the linear characters of $P_n$ which will play a central role in our calculations of induced character values.

\subsection{Wreath products}\label{sec:wreath}
Let $G$ be a finite group, $n\in\mathbb{N}$ and $H\le\fS_n$. We denote by $G^{\times n}$ the direct product of $n$ copies of $G$. The natural action of $\fS_n$ on the direct factors of $G^{\times n}$ induces an action of $\fS_n$ (and therefore of $H$) via automorphisms of $G^{\times n}$, giving the wreath product $G\wr H:= G^{\times n}\rtimes H$. 
As in \cite[Chapter 4]{JK}, we denote the elements of $G\wr H$ by $(g_1,\dotsc,g_n;h)$ for $g_i\in G$ and $h\in H$. Let $V$ be a $\mathbb{C}G$--module and suppose it affords the character $\phi$.
We let $V^{\otimes n}:=V\otimes\cdots\otimes V$ be the corresponding $\mathbb{C}G^{\times n}$--module. The left action of $G\wr H$ on $V^{\otimes n}$ defined by linearly extending
\begin{equation*}
(g_1,\dotsc,g_n;h)\ :\quad v_1\otimes \cdots\otimes v_n \longmapsto g_1v_{h^{-1}(1)}\otimes\cdots\otimes g_nv_{h^{-1}(n)}
\end{equation*}
turns $V^{\otimes n}$ into a $\mathbb{C}(G\wr H)$--module, which we denote by $\widetilde{V^{\otimes n}}$ (see \cite[(4.3.7)]{JK}). We denote by $\tilde{\phi}$ the character afforded by the $\mathbb{C}(G\wr H)$--module $\widetilde{V^{\otimes n}}$. For any ordinary character $\psi$ of $H$, we let $\psi$ also denote its inflation to $G\wr H$ and let
$$\mathcal{X}(\phi;\psi):=\tilde{\phi}\cdot\psi$$
be the ordinary character of $G\wr H$ obtained as the 
product of $\tilde{\phi}$ and $\psi$.

We record some basic results that will be useful later.

\begin{lemma}[{\cite[Lemma 4.3.9]{JK}}]\label{lem:4.3.9}
	Let $n\in\mathbb{N}$. Let $H\le\fS_n$ and $G$ be finite groups. Let $\phi\in\Irr(G)$ and $\psi\in\Irr(H)$. Then for all $f_1,\dotsc,f_n\in G$ and $\pi\in H$,
	$$ \mathcal{X}(\phi;\psi)(f_1,\dotsc,f_n;\pi) = \prod_{v=1}^{c(\pi)} \phi( f_{j_v}\cdot f_{\pi^{-1}(j_v)}\cdot f_{\pi^{-2}(j_v)}\cdots f_{\pi^{-l_v+1}(j_v)}) \cdot \psi(\pi),$$
	where $c(\pi)$ is the number of disjoint cycles in $\pi$, $l_v$ is the length of the $v^{th}$ cycle, and for each $v$, $j_v$ is some fixed element in the $v^{th}$ cycle.
\end{lemma}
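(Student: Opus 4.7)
The plan is to reduce the statement to a trace computation. Since $\psi$ in $\mathcal{X}(\phi;\psi)=\tilde\phi\cdot\psi$ is inflated from $H$ through the quotient map $G\wr H\twoheadrightarrow H$, one has $\psi(f_1,\ldots,f_n;\pi)=\psi(\pi)$, so it suffices to show that $\tilde\phi(f_1,\ldots,f_n;\pi)$ equals the displayed product of $\phi$-values. Here $\tilde\phi$ is the character afforded by $\widetilde{V^{\otimes n}}$, where $V$ is a $\mathbb{C}G$-module affording $\phi$.

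First I would fix a basis $e_1,\ldots,e_d$ of $V$, which induces the tensor basis $\{e_{i_1}\otimes\cdots\otimes e_{i_n}\}$ of $V^{\otimes n}$. Writing $(f_k)_{a,b}$ for the matrix entries of $f_k$ acting on $V$ in this basis, the defining action in the excerpt gives
\[
(f_1,\ldots,f_n;\pi)\cdot (e_{i_1}\otimes\cdots\otimes e_{i_n})=f_1e_{i_{\pi^{-1}(1)}}\otimes\cdots\otimes f_ne_{i_{\pi^{-1}(n)}}.
\]
Extracting the diagonal coefficient and summing over all index tuples yields
\[
\tilde\phi(f_1,\ldots,f_n;\pi)=\sum_{i_1,\ldots,i_n}\,\prod_{k=1}^n (f_k)_{i_k,\,i_{\pi^{-1}(k)}}.
\]

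Next I would split the product according to the cycle decomposition of $\pi$. For a single cycle $C=(j_v,\pi^{-1}(j_v),\ldots,\pi^{-l_v+1}(j_v))$, the indices $i_k$ for $k\in C$ appear in a circular chain of matrix entries, and summing them out gives $\operatorname{tr}\bigl(f_{j_v}\cdot f_{\pi^{-1}(j_v)}\cdots f_{\pi^{-l_v+1}(j_v)}\bigr)=\phi\bigl(f_{j_v}\cdot f_{\pi^{-1}(j_v)}\cdots f_{\pi^{-l_v+1}(j_v)}\bigr)$. Since disjoint cycles involve disjoint sets of indices, the full sum factors as a product over cycles, producing exactly the asserted expression. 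Independence of the cycle representative $j_v$ follows from the conjugation-invariance of $\phi$ applied to the cyclic reordering of the product.

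The only real obstacle is bookkeeping: making sure that the permutation $\pi^{-1}$ on index positions (which comes from the left action defined via $v_{h^{-1}(i)}$) is threaded correctly through the matrix entries so that, after summing, the factors inside $\phi$ appear in the exact order $f_{j_v}, f_{\pi^{-1}(j_v)}, \ldots, f_{\pi^{-l_v+1}(j_v)}$ rather than its reverse. Everything else is standard linear algebra.
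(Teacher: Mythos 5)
Your proposal is correct: the trace computation over the tensor basis, split along the cycles of $\pi$, does yield $\operatorname{tr}\bigl(f_{j_v}f_{\pi^{-1}(j_v)}\cdots f_{\pi^{-l_v+1}(j_v)}\bigr)$ for each cycle in exactly the stated order, and the choice of $j_v$ is immaterial since $\phi$ is a class function. The paper does not prove this lemma but simply cites \cite[Lemma 4.3.9]{JK}, and your argument is precisely the standard proof given there, so nothing further is needed.
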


\noindent For example, if $n=8$ and $\pi=(1,3,7,2)(5,8,6)(4)$, then $\mathcal{X}(\phi;\psi)(f_1,\dotsc,f_8;\pi) = \phi(f_2f_7f_3f_1)\cdot \phi(f_6f_8f_5)\cdot \phi(f_4)\cdot\psi(\pi)$.

\begin{lemma}{\cite[Lemma 2.22]{SLThesis}}\label{lem: pk cycles}
	Let $k\in\mathbb{N}$ and $p$ be a prime. Let $x=(f_1,\dotsc,f_p;\sigma)\in P_{p^k}$ 
	for some $f_i\in P_{p^{k-1}}$ and $\sigma\in P_p$. If $x$ has a fixed point, then $\sigma=1$. Moreover, $x$ is a $p^k$-cycle if and only if $\sigma\ne 1$ and $f_{\sigma^{p-1}(1)}\cdots f_{\sigma(1)}\cdot f_1$ is a $p^{k-1}$-cycle. 
\end{lemma}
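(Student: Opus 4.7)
My plan is to work inside the iterated wreath product realisation $P_{p^k} = P_{p^{k-1}} \wr P_p$ acting on $[p^k]$, viewed as $p$ disjoint blocks $B_1, \dotsc, B_p$ of size $p^{k-1}$: the element $x = (f_1, \dotsc, f_p; \sigma)$ first acts on block $B_i$ by $f_i$ and then permutes the blocks according to $\sigma$.

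For the first assertion, I would observe that $P_p \cong C_p$, so every non-identity element of $P_p$ is a $p$-cycle acting freely on $[p]$. Hence if $\sigma \ne 1$, then $\sigma$ permutes the blocks $B_1, \dotsc, B_p$ without fixing any of them, and so $x$ fixes no point of $[p^k]$. The contrapositive gives the claim.

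For the second assertion, I would compute powers of $x$ directly in the wreath product. A routine induction on $m$ gives $x^m = (h_1^{(m)}, \dotsc, h_p^{(m)}; \sigma^m)$, where each $h_i^{(m)}$ is an explicit ordered product of $m$ factors drawn from $f_1, \dotsc, f_p$ along the $\sigma$-orbit of $i$. If $\sigma = 1$ the blocks are individually preserved, so $x$ cannot be a single $p^k$-cycle. If $\sigma \ne 1$ then $\sigma^p = 1$ and each $h_i := h_i^{(p)}$ is an ordered product of all of $f_1, \dotsc, f_p$; specialising $i = 1$ and unpacking the wreath-product multiplication convention produces precisely the stated expression $f_{\sigma^{p-1}(1)} \cdots f_{\sigma(1)} f_1$. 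I would conclude with a standard orbit argument: since $\sigma$ cyclically permutes the blocks, every $x$-orbit meets each block in equally many points, and the $x$-orbit through a point $y \in B_1$ has size $p$ times the $h_1$-orbit of $y$ in $B_1$. Hence $x$ is a single $p^k$-cycle on $[p^k]$ if and only if $h_1$ is a single $p^{k-1}$-cycle on $B_1$, as required.

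The only delicate step is keeping the multiplication convention of the wreath product straight so that $h_1$ comes out in the exact order $f_{\sigma^{p-1}(1)} \cdots f_{\sigma(1)} f_1$ rather than a cyclic shift of it. This is essentially bookkeeping, and is mostly harmless because cyclic reorderings of the product yield conjugate permutations with the same cycle type, so the substantive statement about being a $p^{k-1}$-cycle is invariant. One must, however, be careful when the \emph{precise} form of $h_1$ is later invoked in character-value computations downstream.
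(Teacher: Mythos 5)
Your proof is correct; note that the paper itself gives no argument for this lemma, simply citing \cite[Lemma 2.22]{SLThesis}, and your block--orbit argument is the standard one: if $\sigma\ne 1$ it is a $p$-cycle on the blocks, so no point is fixed and every $\langle x\rangle$-orbit has size $p$ times the size of the corresponding orbit of the return map of $x^p$ on $B_1$, whence $x$ is a $p^k$-cycle exactly when that return map is a $p^{k-1}$-cycle. You are also right that the cyclic-shift issue is harmless: under the action convention you adopt ($x$ sends the point $b$ of block $B_i$ to the point $f_i(b)$ of block $B_{\sigma(i)}$, matching the module action in Section~\ref{sec:wreath}), the return map to $B_1$ is exactly $f_{\sigma^{p-1}(1)}\cdots f_{\sigma(1)}f_1$, and any cyclic rotation of this product is conjugate to it and so has the same cycle type, which is all that the lemma and its later uses require.
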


\medskip

\subsection{The representation theory of the Sylow subgroups of $\fS_n$}\label{subsec:param}

Fix a prime $p$ and let $n\in\mathbb{N}$. As in \cite[\textsection 2.3.2]{SLThesis}, for every $n$ we fix a Sylow $p$-subgroup $P_n$ of $\fS_n$ such that $P_m\le P_n$ whenever $m$ is a $p$-adic subsum of $n$.
Clearly $P_1$ is trivial while $P_p$ is cyclic of order $p$. More generally, $P_{p^k}= (P_{p^{k-1}})^{\times p}\rtimes P_p=P_{p^{k-1}}\wr P_p\cong P_p\wr \cdots \wr P_p$ ($k$-fold wreath product) for all $k\in\mathbb{N}$.
For an arbitrary natural number $n$, suppose it has $p$-adic expansion $n=\sum_{i=1}^t a_ip^{n_i}$. That is, $t\in\mathbb{N}$, $0\le n_1<\cdots<n_t$ and $a_i\in[p-1]$ for all $i$. Then $P_n\cong (P_{p^{n_1}})^{\times a_1}\times\cdots\times (P_{p^{n_t}})^{\times a_t}$.

\smallskip

We fix a parametrisation of the linear characters of $P_n$ for all $n$ as follows (see \cite[\textsection 2.3.2]{SLThesis} or \cite{GL2} for more detail).
First we consider the case $n=p^k$. When $k=1$, observe that $\Lin(P_p) = \Irr(P_p)$. When $k\ge 2$, \cite[Corollary 6.17]{IBook} gives
\[ \Lin(P_{p^k}) = \{ \mathcal{X}(\phi;\psi) : \phi\in\Lin(P_{p^{k-1}}),\ \psi\in \Lin(P_p) \}. \]
This gives a natural bijection\footnote{We remark that $P_{p^k}/{P_{p^k}}'$ is isomorphic to $(P_p)^k$. Fixing a natural isomorphism $P_{p^k}/{P_{p^k}}'\cong(P_p)^k$, our indexing of $\Lin(P_{p^k})$ may be obtained equivalently from the canonical bijection $\Lin(P_{p^k})\to \Irr(P_{p^k}/{P_{p^k}}')$.} between $\Lin(P_{p^k})$ and $(\Lin(P_p))^k$. More precisely, we may parametrise $\Lin(P_{p^k})$ by writing 
\[ \Lin(P_{p^k}) = \{ \mathcal{X}(\phi_1,\dotsc,\phi_k) : \phi_i\in\Lin(P_p) \}\]
where $\mathcal{X}(\phi_1,\dotsc,\phi_k)$ is defined recursively via $\mathcal{X}(\phi_1)=\phi_1$ when $k=1$, and
\[ \mathcal{X}(\phi_1,\dotsc,\phi_k) := \mathcal{X}\big( \mathcal{X}(\phi_1,\dotsc,\phi_{k-1}); \phi_k \big) \]
when $k\ge 2$.
The following lemma characterises when two linear characters of $P_{p^k}$ are conjugate via an element of $N_{\fS_{p^k}}(P_{p^k})$ in terms of this parametrisation. Here $\triv_{P_p}$ denotes the trivial character of $P_p$. 
\begin{lemma}\label{lem:C1}
	Let $k\in\mathbb{N}$ and let $\theta_1,\theta_2\in\Lin(P_{p^k})$. Suppose $\theta_1=\mathcal{X}(\phi_1,\dotsc,\phi_k)$ and $\theta_2=\mathcal{X}(\psi_1,\dotsc,\psi_k)$. Then $\theta_1$ and $\theta_2$ are $N_{\fS_{p^k}}(P_{p^k})$--conjugate if and only if $\{i : \phi_i=\triv_{P_p}\} = \{j:\psi_j=\triv_{P_p} \}$.
\end{lemma}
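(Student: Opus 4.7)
The plan is to induct on $k$. The base case $k=1$ is immediate: $P_p\cong C_p$ and $N_{\fS_p}(P_p)/P_p\cong C_{p-1}$ acts on $\Lin(P_p)=\Irr(C_p)$ by fixing $\triv_{P_p}$ and permuting the $p-1$ non-trivial characters transitively. For the inductive step, write $P:=P_{p^k}=P_{p^{k-1}}\wr P_p$ with base $B:=(P_{p^{k-1}})^{\times p}$, set $N:=N_{\fS_{p^k}}(P)$ and $N_0:=N_{\fS_{p^{k-1}}}(P_{p^{k-1}})$, and decompose $\theta_1=\mathcal{X}(\Phi;\phi_k)$, $\theta_2=\mathcal{X}(\Psi;\psi_k)$ where $\Phi:=\mathcal{X}(\phi_1,\dotsc,\phi_{k-1})$ and $\Psi:=\mathcal{X}(\psi_1,\dotsc,\psi_{k-1})$. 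The key structural fact (cf.\ \cite[\textsection 2.3.2]{SLThesis}) is that $N\leq N_0\wr N_{\fS_p}(P_p)$, so every $g\in N$ has the form $(g_1,\dotsc,g_p;h)$ with $g_i\in N_0$ and $h\in N_{\fS_p}(P_p)$; in particular $N$ normalises $B$.

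For the ``if'' direction, suppose the trivial patterns agree. The inductive hypothesis applied to $\Phi,\Psi\in\Lin(P_{p^{k-1}})$ yields $g_0\in N_0$ with $\Phi^{g_0}=\Psi$, and a direct check using Lemma~\ref{lem:4.3.9} shows that conjugation by $(g_0,\dotsc,g_0;1)\in N$ sends $\theta_1$ to $\mathcal{X}(\Psi;\phi_k)$. If $\phi_k$ and $\psi_k$ are both non-trivial, the base case provides $h\in N_{\fS_p}(P_p)$ with $\phi_k^h=\psi_k$, and conjugation by $(1,\dotsc,1;h)\in N$ then sends $\mathcal{X}(\Psi;\phi_k)$ to $\theta_2$; if both are trivial, no second step is required.

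For the ``only if'' direction, suppose $\theta_1^g=\theta_2$ with $g=(g_1,\dotsc,g_p;h)\in N$. Restricting to $B$, Lemma~\ref{lem:4.3.9} yields $\theta_1|_B=\Phi\times\cdots\times\Phi$ and $\theta_2|_B=\Psi\times\cdots\times\Psi$, while a computation in the wreath product gives $(\theta_1|_B)^g(f_1,\dotsc,f_p)=\prod_{j=1}^p\Phi^{g_j}(f_j)$; comparing these forces $\Phi^{g_j}=\Psi$ for each $j$, and the inductive hypothesis then yields agreement of the first $k-1$ trivial patterns. For the $k$-th coordinate, evaluate $\theta_1^g=\theta_2$ at $x=(1,\dotsc,1;\sigma)$ for $\sigma$ a generator of $P_p$: applying Lemma~\ref{lem:4.3.9} together with the identity $h\pi^{-a}=\sigma^{-a}h$ (where $\pi:=h^{-1}\sigma h\in P_p$) makes the base-group entries telescope to $1$, so $\theta_1(g^{-1}xg)=\phi_k(h^{-1}\sigma h)=\phi_k^h(\sigma)$; equating to $\psi_k(\sigma)$ and invoking the base case forces $\phi_k,\psi_k$ to be either both trivial or both non-trivial.

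The main obstacle is the wreath-product bookkeeping: pinning down the structure of $N$ precisely enough to describe its action on $B$, and carrying out the telescoping calculation for the $k$-th coordinate cleanly. Once these are handled, both directions reduce transparently to the inductive hypothesis together with the base case.
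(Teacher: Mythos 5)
Your proof is correct. Note that the paper does not write out an argument for this lemma at all; it defers to the cited references (Law's thesis, Lemma 4.3, and Giannelli's Corollary 2.5), and the ``standard argument'' referred to there is exactly what you have reconstructed: the normaliser $N_{\fS_{p^k}}(P_{p^k})$ sits inside $N_{\fS_{p^{k-1}}}(P_{p^{k-1}})\wr N_{\fS_p}(P_p)$, acts coordinate-wise on the parametrisation $\Lin(P_{p^k})\cong\Lin(P_p)^k$ (fixing $\triv_{P_p}$ and acting transitively on the non-trivial characters in each slot), and the induction on $k$ with the diagonal element $(g_0,\dotsc,g_0;1)$, the top element $(1,\dotsc,1;h)$, the restriction to the base group, and the telescoping evaluation at $(1,\dotsc,1;\sigma)$ carries this out in full.
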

\begin{proof}
This follows from a standard argument. We refer the reader to either \cite[Lemma 4.3]{SLThesis} or to \cite[Corollary 2.5]{G20} for a complete proof.
\end{proof}

In other words, $\mathcal{X}(\phi_1,\dotsc,\phi_k)$ and $\mathcal{X}(\psi_1,\dotsc,\psi_k)$ are $N_{\fS_{p^k}}(P_{p^k})$--conjugate if and only if $\triv_{P_p}$ occurs in exactly the same positions in the two sequences $\phi_1,\dotsc,\phi_k$ and $\psi_1,\dotsc,\psi_k$.

Next, we consider arbitrary natural numbers $n$. Suppose $n=\sum_{i=1}^t a_ip^{n_i}$ in $p$-adic expansion, as above. Since $P_n\cong (P_{p^{n_1}})^{\times a_1}\times\cdots\times (P_{p^{n_t}})^{\times a_t}$, we have
\[ \Lin(P_n) = (\Lin(P_{p^{n_1}}))^{a_1} \times \cdots \times (\Lin(P_{p^{n_t}}))^{a_t}. \]
The case of $n=ap^k$ for $k\in\mathbb{N}$ and $a\in[p-1]$ is the most important for our purposes, since our goal is to prove Theorem~\ref{thm: apk}. Therefore, while a statement analogous to Lemma~\ref{lem:C1} holds for all $n$, we need only record it here in the case $n=ap^k$. We note that $\Lin(P_{ap^k})$ may be parametrised as follows:
\[ \Lin(P_{ap^k}) = \{ \mathcal{L}(\phi_{1,1},\phi_{1,2},\dotsc,\phi_{a,k-1},\phi_{a,k}) : \phi_{i,j}\in\Lin(P_p)\ \forall\ i\in[a],\ j\in[k] \} \]
where $\mathcal{L}(\phi_{1,1},\dotsc,\phi_{a,k}) = \mathcal{X}(\phi_{1,1},\dotsc,\phi_{1,k})\times\cdots\times\mathcal{X}(\phi_{a,1},\dotsc,\phi_{a,k})$.

\begin{lemma}\label{lem:C2}
	Let $k\in\mathbb{N}$ and let $a\in[p-1]$. Let $\theta_1,\theta_2\in\Lin(P_{ap^k})$. Suppose $\theta_1=\mathcal{L}(\phi_{1,1},\dotsc,\phi_{a,k})$ and $\theta_2=\mathcal{L}(\psi_{1,1},\dotsc,\psi_{a,k})$. Then $\theta_1$ and $\theta_2$ are $N_{\fS_n}(P_n)$--conjugate if and only if there exists $\sigma\in\fS_a$ such that for each $i\in[a]$, $\mathcal{X}(\phi_{i,1},\dotsc,\phi_{i,k})$ and $\mathcal{X}(\psi_{\sigma(i),1},\dotsc,\psi_{\sigma(i),k})$ are $N_{\fS_{p^k}}(P_{p^k})$--conjugate.
\end{lemma}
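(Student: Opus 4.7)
The plan is to reduce Lemma~\ref{lem:C2} to Lemma~\ref{lem:C1} via the wreath-product structure of the normaliser. The key structural input is the decomposition
\[ N_{\fS_{ap^k}}(P_{ap^k}) \cong N_{\fS_{p^k}}(P_{p^k}) \wr \fS_a, \]
which is standard and may be quoted from \cite[\textsection 2.3.2]{SLThesis}. Intuitively: because $a\in[p-1]$, the group $P_{ap^k}=P_{p^k}^{\times a}$ has exactly $a$ orbits on $[ap^k]$, each of size $p^k$, so any element normalising $P_{ap^k}$ must permute these $a$ orbits among themselves while acting on each orbit via an element of $N_{\fS_{p^k}}(P_{p^k})$. (Contrast this with the coprime-orbit-size situation of the proof of Theorem A, where the factors cannot be permuted and one obtains a direct product rather than a wreath product.)

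Under the identification $\Lin(P_{ap^k})\cong\Lin(P_{p^k})^{\times a}$, the induced conjugation action of $N_{\fS_{ap^k}}(P_{ap^k})$ can then be described explicitly: the base group $N_{\fS_{p^k}}(P_{p^k})^{\times a}$ acts componentwise, its $i$-th factor acting on the $i$-th tensor factor by the usual $N_{\fS_{p^k}}(P_{p^k})$-conjugation, while the top group $\fS_a$ permutes the $a$ factors. This is a routine check: conjugation by an element of the $i$-th base copy fixes every other factor pointwise, while conjugation by a permutation $\sigma\in\fS_a$ of the $a$ blocks precisely sends the $i$-th factor to the $\sigma(i)$-th.

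Given this description, two characters $\eta_1\times\cdots\times\eta_a$ and $\mu_1\times\cdots\times\mu_a$ in $\Lin(P_{p^k})^{\times a}$ are $N_{\fS_{ap^k}}(P_{ap^k})$-conjugate if and only if there exists $\sigma\in\fS_a$ such that $\eta_i$ is $N_{\fS_{p^k}}(P_{p^k})$-conjugate to $\mu_{\sigma(i)}$ for every $i\in[a]$. Specialising to $\eta_i=\mathcal{X}(\phi_{i,1},\dotsc,\phi_{i,k})$ and $\mu_j=\mathcal{X}(\psi_{j,1},\dotsc,\psi_{j,k})$ immediately yields the equivalence claimed in the lemma.

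The only non-bookkeeping ingredient is the wreath-product decomposition of $N_{\fS_{ap^k}}(P_{ap^k})$, which is essentially the main obstacle; however, it is well known in this context and is already implicit in the direct-product version invoked in the proof of Theorem A. Once it is in hand, the rest of the argument is simply an unwinding of definitions of the $\mathcal{L}$- and $\mathcal{X}$-parametrisations together with the componentwise-plus-permutation description of the normaliser action.
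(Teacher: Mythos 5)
Your argument is correct and is essentially the paper's: the paper does not prove Lemma~\ref{lem:C2} directly but defers to \cite[Lemma 4.5]{SLThesis} and \cite[Lemma 2.14]{G20}, whose proofs rest on precisely the decomposition $N_{\fS_{ap^k}}(P_{ap^k})\cong N_{\fS_{p^k}}(P_{p^k})\wr\fS_a$ that you invoke (valid since $a\in[p-1]$ forces $P_{ap^k}=P_{p^k}^{\times a}$ with $a$ orbits of equal size $p^k$). The subsequent reduction to Lemma~\ref{lem:C1} via the componentwise-plus-permutation description of the conjugation action on $\Lin(P_{p^k})^{\times a}$ is the standard unwinding, so there is nothing to add.
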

\begin{proof}
As with Lemma \ref{lem:C1}, we refer the reader to either \cite[Lemma 4.5]{SLThesis} or to \cite[Lemma 2.14]{G20} for a complete proof.
\end{proof}

In light of Lemmas~\ref{lem:C1} and~\ref{lem:C2} and our goal of proving Theorem~\ref{thm: apk}, we may simplify the notation used to parametrise the linear characters of $P_{ap^k}$.

\begin{notation}
	Let $k\in\mathbb{N}$ and $a\in[p-1]$. For the remainder of this article, we denote the elements of $\Lin(P_{ap^k})$ as follows. 
	\begin{itemize}
		\item Let $\mathsf{u}=(\mathsf{u}_1,\dotsc,\mathsf{u}_k)\in\{0,1\}^k$. Then $\phi(\mathsf{u})$ denotes any element $\mathcal{X}(\psi_1,\dotsc,\psi_k)$ of $\Lin(P_{p^k})$ such that $\mathsf{u}_j=1$ if and only if $\psi_j=\triv_{P_p}$.
		
		\item Let $\underline{\mathbf{u}}=(\mathsf{u}^1,\dotsc,\mathsf{u}^a)$ with $\mathsf{u}^i=(\mathsf{u}^i_1,\dotsc,\mathsf{u}^i_k)\in\{0,1\}^k$ for each $i\in[a]$. Then $\phi(\underline{\mathbf{u}})$ denotes any element $\mathcal{L}(\psi_{1,1},\dotsc,\psi_{a,k})$ of $\Lin(P_{ap^k})$ such that $\mathsf{u}^i_j=1$ if and only if $\psi_{i,j}=\triv_{P_p}$.
	\end{itemize}
	For short, we will write $\phi(\mathsf{u})\in\Lin(P_{p^k})$ or $\phi(\underline{\mathbf{u}})\in\Lin(P_{ap^k})$, meaning that $\mathsf{u}\in\{0,1\}^k$ or $\underline{\mathbf{u}}=(\mathsf{u}^1,\dotsc,\mathsf{u}^a)$ with $\mathsf{u}^i\in\{0,1\}^k$ for each $i\in[a]$ respectively.\hfill$\lozenge$
\end{notation}

With this, Theorem~\ref{thm: apk} is therefore equivalent to the following:
\begin{theorem}\label{thm: apk restated}
	Let $k\in\mathbb{N}$ and $a\in[p-1]$. Let $\phi(\underline{\mathbf{s}})$, $\phi(\underline{\mathbf{t}})\in\Lin(P_{ap^k})$. Suppose $\phi(\underline{\mathbf{s}})\up^{\fS_{ap^k}}=\phi(\underline{\mathbf{t}})\up^{\fS_{ap^k}}$. Then there exists a permutation $\sigma\in\fS_a$ such that $\mathsf{s}^i=\mathsf{t}^{\sigma(i)}$ for all $i$.
\end{theorem}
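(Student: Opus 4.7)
The plan is to recover the multiset $\{\!\!\{\mathsf{s}^1,\dots,\mathsf{s}^a\}\!\!\}$ from the class function $\phi(\underline{\mathbf{s}})\up^{\fS_{ap^k}}$ by evaluating it on a carefully chosen family of conjugacy classes of $\fS_{ap^k}$ and appealing to the closed formulae established in Section~\ref{sec:char values}. Since two class functions are equal exactly when they agree on every conjugacy class, showing that the map $\underline{\mathbf{s}} \mapsto \bigl(\phi(\underline{\mathbf{s}})\up^{\fS_{ap^k}}(g)\bigr)_g$ separates $\fS_a$-orbits of tuples would immediately produce a permutation $\sigma \in \fS_a$ with $\mathsf{s}^i = \mathsf{t}^{\sigma(i)}$ for all $i \in [a]$.

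I would first treat the base case $a=1$: here $\underline{\mathbf{s}} = \mathsf{s} \in \{0,1\}^k$, and each bit $\mathsf{s}_j$ must be recovered from $\phi(\mathsf{s})\up^{\fS_{p^k}}$. The iterated-wreath structure $P_{p^k} = P_p \wr \cdots \wr P_p$ and Lemma~\ref{lem:4.3.9}, combined with the closed formula of Lemma~\ref{lem: b=1}, should show that evaluation on an element with a single $p^j$-cycle (padded by fixed points) depends only on the initial segment $(\mathsf{s}_1,\dots,\mathsf{s}_j)$; a reverse induction on $j$ then pins down $\mathsf{s}$. For $a>1$, the factorisation $\phi(\underline{\mathbf{s}}) = \phi(\mathsf{s}^1)\times\cdots\times\phi(\mathsf{s}^a)$ together with Lemma~\ref{lem: C6} expresses character values on elements whose $p$-power cycles respect the $a$-block decomposition as symmetric functions of invariants attached to each $\mathsf{s}^i$. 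Theorem~\ref{thm:general-a} supplies such a formula for arbitrary cycle types, while Proposition~\ref{prop: b>1} covers the key case in which every block carries a cycle of the same $p^j$-length. Varying the cycle type across sufficiently many classes should yield a rich enough family of symmetric functions in the $\mathsf{s}^i$ to recover the multiset, either through a Newton-identity / Vandermonde-type argument or by arranging for each cycle type to isolate a specific "signature" of $\mathsf{s}^i$ determined in the $a=1$ analysis.

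The principal obstacle I anticipate is this final inversion step. The closed formulae of Section~\ref{sec:char values} are concrete but intricate (involving Bell polynomials, Stirling numbers, and sums over partitions), so establishing that the family of evaluated character values is a complete invariant for multisets of binary tuples will require careful selection of test classes---plausibly indexed by support patterns $\mathsf{u} \in \{0,1\}^k$---and a delicate argument decoupling the contributions from different $\mathsf{s}^i$'s. A natural implementation is to exhibit, for each $\mathsf{u}$, a conjugacy class whose associated character value vanishes on $\phi(\mathsf{s}^i)$ unless $\mathsf{s}^i$ matches $\mathsf{u}$ in a prescribed way, so that the multiplicity $|\{i : \mathsf{s}^i = \mathsf{u}\}|$ can be read off directly. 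Proving the requisite vanishing behaviour of the formulae is where I expect the main technical effort to lie.
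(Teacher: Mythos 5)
Your framework---evaluate $\phi(\underline{\mathbf{s}})\up^{\fS_{ap^k}}$ on well-chosen classes, use the closed formulae of Section~\ref{sec:char values}, and invert---is exactly the paper's starting point, and your $a=1$ analysis matches Remark~\ref{rmk:a>1}. But the proposal stops precisely where the real work begins: you yourself flag the ``final inversion step'' as the obstacle, and neither of the two routes you float survives scrutiny. A Newton/Vandermonde argument on power sums would at best recover, for each $l$ separately, the marginal multiset $\{\sum_{m\le l}\mathsf{s}^i_m : i\in[a]\}^*$, and Remark~\ref{rmk:a>1} exhibits $\underline{\mathbf{s}}=\big((1,0,0),(0,1,1)\big)$, $\underline{\mathbf{t}}=\big((1,0,1),(0,1,0)\big)$ for which all of these marginal multisets coincide while the tuples are not permutations of one another; the distinguishing information lives in the cross-terms $\sum_j\prod_i C_{l_i}(\mathsf{s}^j)$ for $b\ge 2$ \emph{distinct} lengths, i.e.\ in the joint distribution of the partial sums across positions. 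Your alternative---classes whose values vanish unless $\mathsf{s}^i$ ``matches'' a prescribed pattern---cannot be realised with these formulae: each $C_l(\mathsf{u})$ is a product of factors $p\mathsf{u}_m-1\in\{p-1,-1\}$ and never vanishes (cf.\ Corollary~\ref{cor:non-zero}), so no such indicator classes exist.

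What the paper actually does, and what your proposal is missing, is threefold. First, the equalities $\sum_j\prod_i C_{l_i}(\mathsf{s}^j)=\sum_j\prod_i C_{l_i}(\mathsf{t}^j)$ (Corollary~\ref{cor:4.18}) are equalities of sums of signed powers $\sum_j(-q)^{\sigma_j}$ with $q=p-1$, and converting these into multiset equalities of the exponents requires Lemma~\ref{lem: signed powers}, whose degenerate case (ii) must be excluded by an additional character computation on products of two equal-length cycles (Lemma~\ref{lem: claim 7})---a cycle type outside the scope of Theorem~\ref{thm:general-a}, which assumes distinct lengths. Second, even once one knows $f(l_1,\dotsc,l_b;\underline{\mathbf{s}})=f(l_1,\dotsc,l_b;\underline{\mathbf{t}})$ for all $b\in[a]$ and distinct $l_i$ (Proposition~\ref{prop: claim 8}), deducing that $\underline{\mathbf{s}}$ is a permutation of $\underline{\mathbf{t}}$ is a genuinely nontrivial combinatorial induction on $a+k$ (Lemma~\ref{lem:keylemma} and Theorem~\ref{thm: part2}); note also that $b$ is capped at $a\le p-1$, so one cannot simply ``vary the cycle type across sufficiently many classes'' to get an overdetermined system. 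As written, the proposal is an accurate description of the problem rather than a proof of the theorem.
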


\bigskip

\section{Induced character values}\label{sec:char values} 

Throughout this section, fix $p$ any prime and $k\in\mathbb{N}$.
To complete the proof of Theorem~\ref{thm:A}, it remains to prove Theorem~\ref{thm: apk restated}. Our approach will be to calculate the values $\phi(\underline{\mathbf{s}})\up^{\fS_{ap^k}}(g)$ for $g$ belonging to various different conjugacy classes of $\fS_n$, or in other words, for $g$ of various different cycle types. Our goal will be to express these values in terms of the sequences $\mathsf{s}^i$ of $\underline{\mathbf{s}}$, so that we can use the assumption $\phi(\underline{\mathbf{s}})\up^{\fS_{ap^k}}=\phi(\underline{\mathbf{t}})\up^{\fS_{ap^k}}$ to deduce information relating $\underline{\mathbf{s}}$ and $\underline{\mathbf{t}}$.
Recall that 
\begin{equation}\label{eqn:induced}
\phi\up^{\fS_{ap^k}}_{P_{ap^k}}(g) = \frac{|{\bf C}_{\fS_{ap^k}}(g)|}{|P_{ap^k}|}\cdot \sum_{x\in\ccl_{\fS_{ap^k}}(g)\cap P_{ap^k}}\phi(x).
\end{equation}
Here and throughout, ${\bf C}_G(g)$ denotes the centraliser and $\ccl_G(g)$ the conjugacy class in $G$ of an element $g\in G$. Since it is easy to calculate $|{\bf C}_{\fS_{ap^k}}(g)|$ and $|P_{ap^k}|$, our focus is on understanding the summation term in \eqref{eqn:induced}, and therefore the structure of $\ccl_{\fS_{ap^k}}(g)\cap P_{ap^k}$. By writing $P_{ap^k}$ as the direct product of $a$ copies of $P_{p^k}$, we can reduce this task to understanding $\ccl_{\fS_{p^k}}(h)\cap P_{p^k}$ for various $h\in\fS_{p^k}$. Since $P_{p^k}=P_{p^{k-1}}\wr P_p$, we can tackle this by induction on $k$. However, the wreath product structure makes this a rather complicated task in general.
	
Fortunately, in order to deduce Theorem~\ref{thm: apk restated}, it will in fact suffice to consider $\phi(\underline{\mathbf{s}})\up^{\fS_{ap^k}}(g)$ for only those $g$ which, excluding fixed points, are products of cycles whose lengths are distinct powers of $p$. In this case the calculations simplify considerably, as explained in the following remark.

\begin{remark}\label{rem: newremark}
	The reason for the aforementioned simplification is exemplified by the case $n=p^k$. Given an element $g\in \fS_{p^k}$ which, excluding fixed points, is a product of disjoint cycles of \emph{distinct} $p$-power length, either $g$ has at least one fixed point in $[{p^k}]$ or $g$ is a single $p^k$-cycle. In the first case, the presence of a fixed point implies that $\ccl_{\fS_{p^k}}(g)\cap P_{p^k}$ is contained in the natural subgroup $P_{p^{k-1}}\times\cdots\times P_{p^{k-1}}$ of $P_{p^k}$, allowing us to avoid working with the wreath product structure. The second case will be dealt with using Lemma~\ref{lem: pk cycles}, which characterises $p^k$-cycles in $P_{p^k}$.
		\hfill$\lozenge$
\end{remark}

The structure of this section is as follows. We begin with the case $n=p^k$ and $g$ a single $p^l$-cycle, for some $1\le l \le k$. In this case, the value $\phi\up^{\fS_{p^k}}_{P_{p^k}}(g)$ (for $\phi\in\Lin(P_{p^k})$) is given by Lemma~\ref{lem: b=1}. This lemma will be proved by induction on $k$, with the key ideas as described in the paragraph above. In Proposition~\ref{prop: b>1}, we extend these methods to any $g\in \fS_{p^k}$ a product of disjoint cycles of {distinct} $p$-power length. Theorem~\ref{thm:general-a} then generalises Proposition~\ref{prop: b>1} from $n=p^k$ to $n=ap^k$ with $a\in[p-1]$ by using the fact that $P_{ap^k}=P_{p^k}\times \dots\times P_{p^k}$. This is conceptually straightforward, but there are several combinatorial simplifications needed to reduce to a usable formula.

\medskip

We are now ready to begin the technical details. We start with some definitions which will help ease the notation in the rest of this section.

\begin{definition}\label{def:Gamma}
	Let $b\in\mathbb{N}$ and suppose that $l_1,l_2,\dotsc,l_b$ are natural numbers in $[k]$. Suppose $g\in\fS_{p^k}$ has disjoint cycles of length $p^{l_1}, p^{l_2},\cdots, p^{l_b}, 1, \dotsc,1$ (we also say $g$ \emph{has cycle type} $p^{l_1}p^{l_2}\cdots p^{l_b}$). Let $\mathsf{u}\in\{0,1\}^k$. Define
	\[ \Gamma_{l_1,l_2,\dotsc,l_b;k}(\mathsf{u}) := \sum_{x\in\ccl_{\fS_{p^k}}(g)\cap P_{p^k}} \phi(\mathsf{u})(x). \]
	More generally, we define $\Gamma_{l_1,l_2,\dotsc,l_b;k}(\mathsf{u})$ for any natural numbers $l_1,\dotsc,l_b$ by setting the value to be 0 whenever $p^{l_1}+p^{l_2}+\cdots+p^{l_b}>p^k$. 
	In particular, \eqref{eqn:induced} can be rewritten as
	\begin{equation}\label{eqn:induced2}
	\phi(\mathsf{u})\up^{\fS_{p^k}}_{P_{p^k}}(g) = \tfrac{|{\bf C}_{\fS_{p^k}}(g)|}{|P_{p^k}|}\cdot\Gamma_{l_1,\dotsc,l_b;k}(\mathsf{u}).
	\end{equation}
\end{definition}

\begin{definition}\label{def:CXY}
	\begin{itemize}
		\item[(i)] Let $l\in[k]$ and $\mathsf{u}=(\mathsf{u}_1,\dotsc,\mathsf{u}_k)\in\{0,1\}^k$. We define
		\[ C_l(\mathsf{u}) := p^{\tfrac{p^l-1}{p-1}-2l}\cdot\prod_{m=1}^l(p\mathsf{u}_m-1). \]
		
		\item[(ii)] For a multiset $T$ of natural numbers, we let $X(T)$ (respectively $Y(T)$) denote the set of elements of $P_{p^k}$ (respectively $P_{p^{k-1}}$) of cycle type $\prod_{t\in T}p^t$.
		If $T=\{t_1,\dotsc,t_z\}$, we sometimes also use the notation $X(t_1,\dotsc,t_z)$ or $Y(t_1,\dotsc,t_z)$.
	\end{itemize}
\end{definition}

It is useful to observe that $C_l(\mathsf{u})$ does not depend on $\mathsf{u}_{l+1},\dotsc,\mathsf{u}_k$. In other words, $C_l(\mathsf{u})=C_l\big((\mathsf{u}_1,\dotsc,\mathsf{u}_l)\big)$. The following lemma gives the value of $\phi(\mathsf{u})\up^{\fS_{p^k}}_{P_{p^k}}$ on a single $p^l$-cycle, using \eqref{eqn:induced2}.

\begin{lemma}\label{lem: b=1}
	Let $l\in[k]$ and let $\mathsf{u}=(\mathsf{u}_1,\dotsc,\mathsf{u}_k)\in\{0,1\}^k$. Then 
	\[ \Gamma_{l;k}(\mathsf{u}) = p^k\cdot C_l(\mathsf{u}). \]
\end{lemma}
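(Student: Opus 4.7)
The plan is to proceed by induction on $k$, exploiting the wreath product decomposition $P_{p^k}=P_{p^{k-1}}\wr P_p$. Write $\phi(\mathsf{u})=\mathcal{X}(\phi;\chi)$, where $\phi = \mathcal{X}(\psi_1,\dots,\psi_{k-1})\in\Lin(P_{p^{k-1}})$ is the character corresponding to $\mathsf{u}'=(\mathsf{u}_1,\dots,\mathsf{u}_{k-1})$ and $\chi=\psi_k\in\Lin(P_p)$ corresponds to $\mathsf{u}_k$. The base case $k=l=1$ is immediate: $\phi(\mathsf{u})$ is either trivial or a faithful linear character of $P_p$, and summing over the $p-1$ nontrivial elements gives $p-1$ or $-1$ respectively, both equal to $p\mathsf{u}_1-1 = p\cdot C_1(\mathsf{u})$. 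For the inductive step, following Remark~\ref{rem: newremark}, I split into the cases $l<k$ and $l=k$.

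For $l<k$, any $x=(f_1,\dots,f_p;\sigma)\in\ccl_{\fS_{p^k}}(g)\cap P_{p^k}$ has a fixed point, so by Lemma~\ref{lem: pk cycles} we have $\sigma=1$ and $x$ lives in the base group $(P_{p^{k-1}})^{\times p}$. Since $x$ must be a single $p^l$-cycle, exactly one $f_i$ is a $p^l$-cycle in $P_{p^{k-1}}$ and the others are trivial. Applying Lemma~\ref{lem:4.3.9} with $\pi=1$ yields $\phi(\mathsf{u})(x)=\phi(f_i)$, so the sum partitions over the $p$ choices of position $i$ to give $\Gamma_{l;k}(\mathsf{u}) = p\cdot \Gamma_{l;k-1}(\mathsf{u}')$; since $C_l$ does not depend on coordinates beyond $l$, the induction closes immediately.

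For $l=k$, Lemma~\ref{lem: pk cycles} identifies the $p^k$-cycles in $P_{p^k}$ as those $(f_1,\dots,f_p;\sigma)$ with $\sigma\ne 1$ and $y := f_{\sigma^{p-1}(1)}\cdots f_{\sigma(1)} f_1$ a $p^{k-1}$-cycle in $P_{p^{k-1}}$. By Lemma~\ref{lem:4.3.9}, $\phi(\mathsf{u})(x)$ factors as $\phi(y')\cdot\chi(\sigma)$, where $y'$ is a cyclic rotation of $y$ and hence conjugate to $y$ in $P_{p^{k-1}}$, so $\phi(y')=\phi(y)$ when $y$ is restricted to the class of $p^{k-1}$-cycles (both $\phi(y)$ and $\phi(y')$ depend only on its class). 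For each fixed $\sigma\ne 1$ and each $p^{k-1}$-cycle $y$, the number of tuples $(f_1,\dots,f_p)$ producing $y$ is exactly $|P_{p^{k-1}}|^{p-1}$ (choose $p-1$ of the $f_i$ freely and solve for the last). This yields
\[ \Gamma_{k;k}(\mathsf{u}) \;=\; \Big(\textstyle\sum_{\sigma\ne 1}\chi(\sigma)\Big)\cdot |P_{p^{k-1}}|^{p-1}\cdot \Gamma_{k-1;k-1}(\mathsf{u}'). \]
Orthogonality of characters of $P_p$ gives $\sum_{\sigma\ne 1}\chi(\sigma)=p\mathsf{u}_k-1$, and the inductive hypothesis supplies $\Gamma_{k-1;k-1}(\mathsf{u}')=p^{k-1}C_{k-1}(\mathsf{u}')$.

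The only real obstacle is the bookkeeping of the exponent of $p$: using $|P_{p^{k-1}}|=p^{(p^{k-1}-1)/(p-1)}$, one must check that
\[ p^{k-1}-1+(k-1)-2(k-1)+\tfrac{p^{k-1}-1}{p-1} \;=\; \tfrac{p^k-1}{p-1}-k, \]
which follows from the identity $\tfrac{p^k-1}{p-1}=\tfrac{p^{k-1}-1}{p-1}+p^{k-1}$. Combined with the product of $(p\mathsf{u}_m-1)$ factors aggregating from $\mathsf{u}'$ and the new factor $(p\mathsf{u}_k-1)$, this yields exactly $p^k\cdot C_k(\mathsf{u})$, completing the induction.
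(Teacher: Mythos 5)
Your proof is correct and follows essentially the same route as the paper's: induction on $k$ via $P_{p^k}=P_{p^{k-1}}\wr P_p$, with the $l<k$ case handled through the fixed-point/base-group reduction giving $\Gamma_{l;k}=p\cdot\Gamma_{l;k-1}$, and the $l=k$ case via Lemma~\ref{lem: pk cycles} with the same $|P_{p^{k-1}}|^{p-1}$ counting of tuples over a fixed product $y$. The exponent bookkeeping you verify matches the paper's computation exactly.
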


\begin{proof}
	Observe that if $l=k=1$, then $\Gamma_{1;1}(\mathsf{u})=p\mathsf{u}_1-1$. In other words,
	\[ \Gamma_{1;1}(\mathsf{u}) = \sum_{x\in P_p\setminus\{1\}} \phi(\mathsf{u})(x) = \begin{cases} p-1 & \mathrm{if}\ \phi(\mathsf{u})=\triv_{P_p}\ \ (\text{i.e.~if}\ \mathsf{u}=(1)),\\ -1 & \text{otherwise \ \ (i.e.~if $\mathsf{u}=(0)$)}.\end{cases} \]
	Now let $k\ge 2$ and let $\mathsf{u}^-=(\mathsf{u}_1,\dotsc,\mathsf{u}_{k-1})$. First suppose $l=k$. 
	Let $x\in X(k)$, that is, let $x$ be a $p^k$-cycle in $P_{p^k}$. By Lemma~\ref{lem: pk cycles}, $x=(f_1,\dotsc,f_p;\sigma)$ where $\sigma\in P_p\setminus\{1\}$, $f_i\in P_{p^{k-1}}$ for all $i$ and $f_{\sigma^{p-1}(1)}\cdots f_{\sigma(1)}\cdot f_1\in Y(k-1)$.
	By Lemma~\ref{lem:4.3.9},
	
	\begin{align*}
	\Gamma_{k;k}(\mathsf{u}) &= 
	\sum_{x\in X(k)} \mathcal{X}(\mathsf{u}^-;\mathsf{u}_k) \big((f_1,\dotsc,f_p;\sigma)\big) = \sum_{x\in X(k)} \phi(\mathsf{u}^-)(f_{\sigma^{p-1}(1)}\cdots f_{\sigma(1)}\cdot f_1)\cdot\phi(\mathsf{u}_k)(\sigma)\\
	&= |P_{p^{k-1}}|^{p-1}\cdot \sum_{y\in Y(k-1)} \phi(\mathsf{u}^-)(y) \cdot \sum_{\sigma\in P_p\setminus\{1\}} \phi(\mathsf{u}_k)(\sigma) = p^{p^{k-1}-1}\cdot \Gamma_{k-1;k-1}(\mathsf{u}^-)\cdot(p\mathsf{u}_k-1).
	\end{align*}
	The third equality holds since for any fixed $y\in Y(k-1)$, we may choose the $p-1$ elements $f_1,f_{\sigma(1)},\dotsc,f_{\sigma^{p-2}(1)}$ in $P_{p^{k-1}}$ freely, after which $f_{\sigma^{p-1}(1)}\cdots f_{\sigma(1)}\cdot f_1=y$ uniquely determines $f_{\sigma^{p-1}(1)}$. Inductively, we have that
	\[ \Gamma_{k;k}(\mathsf{u}) = p^{(p^{k-1}+p^{k-2}+\cdots+1)-k}\cdot\prod_{m=1}^k (p\mathsf{u}_m-1) = p^{\tfrac{p^k-1}{p-1}-k}\cdot\prod_{m=1}^k (p\mathsf{u}_m-1). \]
	Next, let $l\in[k-1]$. Let $x\in X(l)$. Then $x$ must have a fixed point as $l<k$. Thus $x=(f_1,\dotsc,f_p;1)$ where $f_i\in Y(l)$ for a unique $i\in[p]$ and $f_j=1$ for all $j\ne i$. Hence by Lemma~\ref{lem:4.3.9},
	\begin{align*}
	\Gamma_{l;k}(\mathsf{u}) &= \sum_{x\in X(l)} \phi(\mathsf{u}^-)(f_1)\cdots\phi(\mathsf{u}^-)(f_p)\cdot\phi(\mathsf{u}_k)(1)\\
	&= \binom{p}{1}\sum_{y\in Y(l)} \phi(\mathsf{u}^-)(y) = p\cdot \Gamma_{l;k-1}(\mathsf{u}^-) = p^{k-l}\cdot\Gamma_{l;l}\big((\mathsf{u}_1,\dotsc,\mathsf{u}_l)\big).
	\end{align*}
	The assertion then follows from the $l=k$ case.
\end{proof}

\begin{corollary}\label{cor:non-zero}
	Let $a\in[p-1]$ and $\phi\in\Lin(P_{ap^k})$. Then $\phi\up^{\fS_{ap^k}}_{P_{ap^k}}(g)\ne 0$ where $g$ is a product of $a$ disjoint $p^k$-cycles.
\end{corollary}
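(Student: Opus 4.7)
The plan is to reduce the character value $\phi\up^{\fS_{ap^k}}_{P_{ap^k}}(g)$ to a product of $a$ copies of $\Gamma_{k;k}$ and then to observe that each such factor is non-zero by the explicit formula of Lemma~\ref{lem: b=1}. First, using the isomorphism $P_{ap^k}\cong P_{p^k}^{\times a}$ (with the $i$-th factor supported on the $i$-th block of $p^k$ points), write $\phi=\phi(\mathsf{u}^1)\times\cdots\times\phi(\mathsf{u}^a)$ where $\mathsf{u}^i\in\{0,1\}^k$ for each $i\in[a]$.

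Next, I would identify $\ccl_{\fS_{ap^k}}(g)\cap P_{ap^k}$. Since each element of $P_{ap^k}$ preserves the partition of $[ap^k]$ into $a$ blocks of size $p^k$, every cycle of any $x\in P_{ap^k}$ lies entirely in a single block. Therefore $x$ has cycle type $(p^k)^a$ if and only if each of its $a$ block components is a $p^k$-cycle in $P_{p^k}$, so
\[ \ccl_{\fS_{ap^k}}(g)\cap P_{ap^k} = X(k)\times\cdots\times X(k) \qquad (a\ \text{factors}). \]
Using \eqref{eqn:induced}, the induced character value becomes
\[ \phi\up^{\fS_{ap^k}}_{P_{ap^k}}(g) = \tfrac{|{\bf C}_{\fS_{ap^k}}(g)|}{|P_{ap^k}|}\cdot\prod_{i=1}^a\left(\sum_{x_i\in X(k)} \phi(\mathsf{u}^i)(x_i)\right) = \tfrac{|{\bf C}_{\fS_{ap^k}}(g)|}{|P_{ap^k}|}\cdot\prod_{i=1}^a \Gamma_{k;k}(\mathsf{u}^i). \]

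Finally, I would invoke Lemma~\ref{lem: b=1} to get $\Gamma_{k;k}(\mathsf{u}^i)=p^k\cdot C_k(\mathsf{u}^i)$, and inspect the defining formula
\[ C_k(\mathsf{u}^i) = p^{\tfrac{p^k-1}{p-1}-2k}\cdot\prod_{m=1}^k(p\mathsf{u}^i_m-1). \]
Each factor $p\mathsf{u}^i_m-1$ equals either $p-1$ (when $\mathsf{u}^i_m=1$) or $-1$ (when $\mathsf{u}^i_m=0$), and both are non-zero. Hence $C_k(\mathsf{u}^i)\ne 0$ for every $i$, so the product $\prod_i \Gamma_{k;k}(\mathsf{u}^i)$ is non-zero, and the claim follows. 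There is essentially no obstacle here once Lemma~\ref{lem: b=1} is in hand; the only mild point is the identification of $\ccl_{\fS_{ap^k}}(g)\cap P_{ap^k}$ as a direct product, which is immediate from the fact that $P_{ap^k}$ is block-diagonal.
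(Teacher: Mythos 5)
Your proposal is correct and follows essentially the same route as the paper: decompose $P_{ap^k}$ as $P_{p^k}^{\times a}$, factor the sum over $\ccl_{\fS_{ap^k}}(g)\cap P_{ap^k}$ as a product of the sums $\Gamma_{k;k}(\mathsf{u}^i)$, and conclude via the explicit formula of Lemma~\ref{lem: b=1} that each factor $p^k\cdot C_k(\mathsf{u}^i)$ is non-zero. The only difference is that you spell out the block-diagonal identification of the conjugacy class intersection and the non-vanishing of $C_k(\mathsf{u}^i)$ in slightly more detail than the paper does.
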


\begin{proof}
	From \eqref{eqn:induced}, it suffices to show that
	\[ \sum_{x\in\ccl_{\fS_{ap^k}}(g)\cap P_{ap^k}} \phi(x) \ne 0. \]
	Since $P_{ap^k}=P_{p^k}\times\cdots\times P_{p^k}$ ($a$ times), $x\in P_{ap^k}$ implies $x=x_1\cdots x_a$ with each $x_i$ in a distinct direct factor $P_{p^k}$. Letting $\phi=\phi(\mathsf{u}^1)\times\cdots\times\phi(\mathsf{u}^a)$ where $\phi(\mathsf{u}^i)\in\Lin(P_{p^k})$ for each $i$, we therefore have that
	\[ \sum_{x\in\ccl_{\fS_{ap^k}}(g)\cap P_{ap^k}} \phi(x) = \prod_{i=1}^a \left(\sum_{x_i\in X(k)}\phi(\mathsf{u}^i)(x_i)\right) = \prod_{i=1}^a \Gamma_{k;k}(\mathsf{u}^i) = \prod_{i=1}^a \big(p^k\cdot C_k(\mathsf{u}^i)\big) \]
	by Lemma~\ref{lem: b=1}, which is non-zero (Definition~\ref{def:CXY}).
\end{proof}

\begin{remark}\label{rmk:a>1}
	Lemma~\ref{lem: b=1} is already enough to prove Theorem~\ref{thm: apk restated} when $a=1$. Indeed, let $\phi(\mathsf{s})$, $\phi(\mathsf{t})\in\Lin(P_{p^k})$ for some $\mathsf{s},\mathsf{t}\in\{0,1\}^k$ and suppose $\phi(\mathsf{s})\up^{\fS_{p^k}}=\phi(\mathsf{t})\up^{\fS_{p^k}}$. Then $\phi(\mathsf{s})\up^{\fS_{p^k}}(g)=\phi(\mathsf{t})\up^{\fS_{p^k}}(g)$ for each $g\in\fS_{p^k}$, in particular $g$ of cycle type $p^l$ for each $l\in[k]$. By Lemma~\ref{lem: b=1}, this implies
	\[ \prod_{m=1}^l (p\mathsf{s}_m-1) = \prod_{m=1}^l (p\mathsf{t}_m-1)\]
	for all $l\in[k]$. Therefore $\mathsf{s}_m=\mathsf{t}_m$ for all $m\in[k]$, and thus $\mathsf{s}=\mathsf{t}$.
	
	However, Lemma~\ref{lem: b=1} is not enough when $a>1$. For example, let $a=2$, $k=3$ and consider $\underline{\mathbf{s}}=\big((1,0,0),(0,1,1)\big)$ and $\underline{\mathbf{t}}=\big((1,0,1),(0,1,0)\big)$. The induced characters $\phi(\underline{\mathbf{s}})\up^{\fS_{2p^3}}$ and $\phi(\underline{\mathbf{t}})\up^{\fS_{2p^3}}$ agree on $p$, $p^2$ and $p^3$-cycles, but are not equal. This motivates considering more complicated cycle types, as we shall now do.
	\hfill$\lozenge$
\end{remark}

%

The next proposition gives the value of the induced character $\phi(\mathsf{u})\up^{\fS_{p^k}}_{P_{p^k}}$ on a product of $p$-power cycles of distinct length. Before we state the proposition, we give some key definitions.

\begin{notation}\label{not:l}
	\begin{itemize}\itemspace
		\item[(i)] For a set $A$, let $\partition{A}$ denote the set of partitions of $A$, e.g.
		\[ \partition{\{1,2,3\}} = \Big\{ \big\{ \{1,2,3\} \big\}, \big\{ \{1,2\}, \{3\} \big\}, \big\{ \{1,3\}, \{2\} \big\}, \big\{ \{2,3\}, \{1\} \big\}, \big\{ \{1\}, \{2\}, \{3\} \big\} \Big\}. \]
		
		\item[(ii)] For the remainder of Section~\ref{sec:char values}, suppose $l_1,l_2,\dotsc,l_b$ are fixed integers for some $b\in\mathbb{N}$ such that $1\le l_1<l_2<\cdots<l_b\le k$. We further let $p_j:=p^{l_j}$.\hfill$\lozenge$
	\end{itemize}
\end{notation}

\begin{definition}\label{def:R}
	Let $\tau=\{t_1<t_2<\cdots<t_z\}\subseteq[b]$ be a set of distinct natural numbers. Let $i\in\mathbb{N}$. Define
	\[ R^\tau_i:= \begin{cases}
	(p^i-p_{t_z})(p^i-p_{t_z}-p_{t_{z-1}})\cdots(p^i-p_{t_z}-\cdots-p_{t_2}) & \text{if }z\ge 2,\\
	1 & \text{if }z=1.
	\end{cases} \]
\end{definition}

\noindent For example, if $b>1$ then $R^{[b]}_k = (p^k-p^{l_b})(p^k-p^{l_b}-p^{l_{b-1}})\cdots(p^k-p^{l_b}-p^{l_{b-1}}-\cdots-p^{l_2})$.

\begin{proposition}\label{prop: b>1}
	Let $b\in[p-1]$ and $\mathsf{u}\in\{0,1\}^k$. Suppose $l_1,l_2,\dotsc,l_b$ are integers such that $1\le l_1<l_2<\cdots<l_b\le k$. Then
	\begin{equation}\label{eqn: prop b>1}
	\Gamma_{l_1,l_2,\dotsc,l_b;k}(\mathsf{u}) = p^k\cdot C_{l_1}(\mathsf{u})\cdots C_{l_b}(\mathsf{u})\cdot R^{[b]}_k.
	\end{equation}
\end{proposition}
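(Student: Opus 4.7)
The plan is to proceed by induction on $k$. For the base case $k=1$, the constraints $1 \le l_1 < \cdots < l_b \le 1$ force $b=1$, and the identity reduces to Lemma~\ref{lem: b=1} together with $R^{[1]}_1 = 1$. For the inductive step $k \ge 2$, I would first dispose of the possibility $l_b = k$: a single $p^k$-cycle already occupies all of $[p^k]$, so we must have $b=1$, which is again handled by Lemma~\ref{lem: b=1}. Otherwise $l_b \le k-1$ and $b \ge 2$; combined with the hypothesis $b \le p-1$, the total number of moved points is at most $(p-1)p^{l_b} < p^k$, so every $x \in X(l_1, \dotsc, l_b)$ has a fixed point.

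By Lemma~\ref{lem: pk cycles}, each such $x$ can be written as $(f_1, \dotsc, f_p; 1)$ with $f_i \in P_{p^{k-1}}$, and its cycle type is the disjoint union of the cycle types of the $f_i$. Applying Lemma~\ref{lem:4.3.9} with $\pi = 1$ factorises the character value as $\phi(\mathsf{u})(x) = \prod_{i=1}^p \phi(\mathsf{u}^-)(f_i)$, where $\mathsf{u}^- = (\mathsf{u}_1, \dotsc, \mathsf{u}_{k-1})$. Classifying $x$ by the function $\alpha \colon [b] \to [p]$ that sends each cycle index to its slot (well-defined since the cycle lengths are distinct), and using the convention $\Gamma_{\emptyset; k-1}(\mathsf{u}^-) := 1$, this yields the recursion
\[ \Gamma_{l_1, \dotsc, l_b; k}(\mathsf{u}) = \sum_{\alpha \colon [b] \to [p]} \prod_{i=1}^p \Gamma_{\{l_j : \alpha(j) = i\}; k-1}(\mathsf{u}^-). \]
Each fibre of $\alpha$ has size at most $b \le p - 1$, so the inductive hypothesis applies to every factor. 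Grouping the functions $\alpha$ by the set partition $\pi(\alpha)$ of $[b]$ they induce (there are $p(p-1)\cdots(p-|\pi|+1) = p^{\underline{|\pi|}}$ such $\alpha$ for each $\pi$) and using $C_l(\mathsf{u}^-) = C_l(\mathsf{u})$ for all $l \le k-1$, the above rearranges into
\[ \Gamma_{l_1, \dotsc, l_b; k}(\mathsf{u}) = \prod_{j=1}^b C_{l_j}(\mathsf{u}) \cdot \sum_{\pi \in \partition{[b]}} p^{\underline{|\pi|}} (p^{k-1})^{|\pi|} \prod_{S \in \pi} R^S_{k-1}. \]

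What remains, and what I expect to be the main obstacle, is the combinatorial identity
\[ \sum_{\pi \in \partition{[b]}} p^{\underline{|\pi|}} (p^{k-1})^{|\pi|} \prod_{S \in \pi} R^S_{k-1} = p^k \cdot R^{[b]}_k. \]
Both sides are polynomials of degree $b$ in $y := p^{k-1}$, and as a sanity check the coefficient of $y^b$ on the left collapses to $\sum_{\pi \in \partition{[b]}} p^{\underline{|\pi|}} = p^b$ by the classical Stirling identity, matching the leading coefficient on the right. To handle the full identity I would try either an induction on $b$ that isolates the block of $\pi$ containing the largest index and exploits the recursive structure of $R^\tau_y$, or a term-by-term comparison after expanding both sides in ordinary powers of $y$ using Stirling numbers of the second kind and the Bell polynomial identities foreshadowed in the introduction. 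The worked-out small cases in Appendix~\ref{sec:appendix} should be helpful for verifying the identity on low $b$ and guiding the general combinatorial manipulation.
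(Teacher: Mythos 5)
Your reduction is sound and coincides with the paper's up to the point where the real work begins. The fixed-point decomposition $x=(f_1,\dotsc,f_p;1)$, the factorisation via Lemma~\ref{lem:4.3.9}, and the grouping of the maps $\alpha$ by the induced set partition reproduce exactly the paper's recursion (\ref{prop: b>1}(b)), since $p(p-1)\cdots(p-|\pi|+1)=\tfrac{p!}{(p-|\pi|)!}$. The one genuine difference is the induction scheme: you induct on $k$, so you may substitute the inductive formula into \emph{every} block including $S=[b]$, arriving at the one-step identity
\[
\sum_{\pi\in\partition{[b]}}\tfrac{p!}{(p-|\pi|)!}\,(p^{k-1})^{|\pi|}\prod_{S\in\pi}R^S_{k-1}\;=\;p^k\,R^{[b]}_k,
\]
whereas the paper inducts on $b$, cannot touch the block $[b]$, and instead unrolls the recursion in $k$ down to $i=l_b$, obtaining the telescoped identity $\Sigma_b=R^{[b]}_k$ of (\ref{prop: b>1}(e)). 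The two are equivalent: your identity is precisely the single recursion step $p^kR^{[b]}_k=p\cdot p^{k-1}R^{[b]}_{k-1}+(\text{terms with }\pi\ne\{[b]\})$, and summing it over levels $l_b+1,\dotsc,k$ (using $R^{[b]}_{l_b}=0$) yields the paper's version. A small slip: when $l_b=k$ the hypotheses do not force $b=1$; rather, for $b\ge2$ both sides vanish, since $X(l_1,\dotsc,l_b)=\emptyset$ and $R^{[b]}_k$ contains the factor $p^k-p^{l_b}=0$.

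The genuine gap is that the displayed combinatorial identity is asserted, checked only on its top coefficient, and then deferred to a choice of two possible strategies. That identity \emph{is} the proof: in the paper it occupies essentially all of (\ref{prop: b>1}(d))--(\ref{prop: b>1}(g)) and requires a nontrivial regrouping of the partitions $\nu$ according to the partition $\gamma$ of $\{2,\dotsc,b\}$ obtained by deleting the element $1$, a subsidiary use of the $b-1$ case to identify $\sum_{j=l_b}^{i-1}Q_j$ with $R^{\{2,\dotsc,b\}}_i$, and an Abel-summation-style rearrangement. The obstruction to a quick proof is exactly the feature your sanity check sidesteps: $R^S_{k-1}$ depends on the actual elements of $S$ and not merely on $|S|$, so the classical Stirling/Bell identities that dispose of the coefficient of $y^b$ say nothing about the lower coefficients. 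Until the identity is actually established (by the paper's regrouping argument or otherwise), the proof is incomplete; everything preceding it is correct but is also the routine part of the argument.
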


In Appendix~\ref{sec:appendix}, we provide detailed examples illustrating the proof of Proposition~\ref{prop: b>1} for $b=2$ and $b=3$. In Example~\ref{ex:gamma b=2} we prove the proposition directly for $b=2$, and in Example~\ref{ex:gamma b=3} we demonstrate the inductive step of the argument by deducing the proposition for $b=3$ from the case of $b=2$. 
Equations (\ref{prop: b>1}(a)--(g)) below are labelled as such to illustrate their correspondence to equations (\ref{ex:gamma b=3}(a)--(g)) in Appendix~\ref{sec:appendix}.
It may help the reader to first familiarise with the main proof ideas by reading Examples~\ref{ex:gamma b=2} and~\ref{ex:gamma b=3} before reading the general proof, which we now give. 

\begin{proof}[Proof of Proposition~\ref{prop: b>1}]
	We proceed by induction on $b$, with Lemma~\ref{lem: b=1} providing the base case $b=1$. Now assume $b\ge 2$, and that Proposition~\ref{prop: b>1} holds for all $b'\in[p-1]$ such that $b'<b$. Notice that both sides of \eqref{eqn: prop b>1} equal 0 if $l_b=k$, so from now on we may assume $l_b<k$. Let $\mathsf{u}^-=(\mathsf{u}_1,\dotsc,\mathsf{u}_{k-1})$.
	From Definitions~\ref{def:Gamma} and~\ref{def:CXY},
	\begin{equation}\label{eqn:b>2 start}
	\tag{\ref{prop: b>1}(a)}
	\Gamma_{l_1,\dotsc,l_b;k}(\mathsf{u}) = \sum_{x\in X(l_1,\dotsc,l_b)}\phi(\mathsf{u})(x).
	\end{equation}
	Consider some $x\in X(l_1,\dotsc,l_b)$. Since $l_1<l_2<\cdots <l_b<k$, we deduce that $x$ has a fixed point. Hence $x=(f_1,\dots,f_p;1)$ with $f_1,\dotsc,f_p\in P_{p^{k-1}}$ and so 
	\[ \phi(\mathsf{u})(x) = \phi(\mathsf{u}^-)(f_1)\cdot \phi(\mathsf{u}^-)(f_2)\cdots\phi(\mathsf{u}^-)(f_p) \]
	by Lemma~\ref{lem:4.3.9}. Moreover, the product of the cycle types of $f_i$ over all $i$ equals $p^{l_1}\cdots p^{l_b}$ since $x\in X(l_1,\dotsc,l_b)$.
	By considering how the cycle lengths $p^{l_w}$ ($w\in[b]$) are grouped together to give the cycle types of $f_1,\dotsc,f_p$
	, we can rewrite the sum over $x\in X(l_1,\dotsc,l_b)$ in \eqref{eqn:b>2 start} as a sum over partitions 
	$\nu$ of the set $[b]=\{1,2,\dotsc,b\}$
	to obtain 
	\begin{equation}\label{eqn:b>2 (a)}
	\tag{\ref{prop: b>1}(b)}
	\begin{split}
	\Gamma_{l_1,\dotsc,l_b;k}(\mathsf{u}) &= \sum_{\nu\in\partition{[b]}} \frac{p!}{(p-|\nu|)!}\cdot\prod_{\omega\in\nu} \Gamma_{{l_\omega};k-1}(\mathsf{u}^-)\\
	&= \sum_{\substack{\nu\in\partition{[b]}\\ \nu\ne\{[b]\}}} \frac{p!}{(p-|\nu|)!}\cdot\prod_{\omega\in\nu} \Gamma_{{l_\omega};k-1}(\mathsf{u}^-) + p\cdot\Gamma_{l_1,\dotsc,l_b;k-1}(\mathsf{u}^-).
	\end{split}
	\end{equation}
	Here $\Gamma_{{l_\omega};k-1}(\mathsf{u}^-)$ denotes $\Gamma_{l_{w_1},\dotsc,l_{w_t};k-1}(\mathsf{u}^-)$ when $\omega=\{w_1,\dotsc,w_t\}\subseteq[b]$. (So the last term of \eqref{eqn:b>2 (a)} which corresponds to $\nu=\{[b]\}$ can also be written as $p\cdot\Gamma_{l_{[b]};k-1}(\mathsf{u}^-)$, for instance.)
	By iterating \eqref{eqn:b>2 (a)} we obtain the following:
	\begin{equation}\label{eqn:b>2 (b)}
	\tag{\ref{prop: b>1}(c)}
	\Gamma_{l_1,\dotsc,l_b;k}(\mathsf{u}) = \sum_{i=l_b}^{k-1} p^{k-i} \sum_{\substack{\nu\in\partition{[b]}\\ \nu\ne\{[b]\}}} \frac{(p-1)!}{(p-|\nu|)!}\cdot\prod_{\omega\in\nu} \Gamma_{{l_\omega};i}\big((\mathsf{u}_1,\dotsc,\mathsf{u}_i)\big),
	\end{equation}
	recalling that $\Gamma_{l_1,l_2,\dotsc,l_b;i}=0$ if $l_b=i$.
	Since $\nu\ne\{[b]\}$, every $\omega$ appearing in \eqref{eqn:b>2 (b)} satisfies $|\omega|<b$. Therefore the inductive hypothesis gives
	\[ \Gamma_{{l_\omega};i}\big((\mathsf{u}_1,\dotsc,\mathsf{u}_i)\big) = p^i\cdot C_{l_{w_1}}(\mathsf{u})\cdots C_{l_{w_t}}(\mathsf{u})\cdot R^\omega_i. \]
	Substituting this into \eqref{eqn:b>2 (b)}, we obtain
	\begin{align}
	\Gamma_{l_1,\dotsc,l_b;k}(\mathsf{u}) &= \sum_{i=l_b}^{k-1} p^{k-i} \sum_{\substack{\nu\in\partition{[b]}\\ \nu\ne\{[b]\}}} \frac{(p-1)!}{(p-|\nu|)!}\cdot\prod_{\omega\in\nu} \left[ p^i \cdot \prod_{w\in\omega} C_{l_w}(\mathsf{u}) \cdot R^\omega_i \right]\nonumber\\
	&= p^k\cdot \prod_{i=1}^b C_{l_i}(\mathsf{u})\cdot \sum_{i=l_b}^{k-1}\ \sum_{\substack{\nu\in\partition{[b]}\\ \nu\ne\{[b]\}}} \frac{(p-1)!}{(p-|\nu|)!}\cdot p^{i(|\nu|-1)}\cdot\prod_{\omega\in\nu} R^\omega_i.	\tag{\ref{prop: b>1}(d)}
	\end{align}
	Thus, it remains to prove that $\Sigma_b = R^{[b]}_k$, 
	where we define $\Sigma_b$ as follows:
	\begin{equation}
	\tag{\ref{prop: b>1}(e)}
	\Sigma_b:= \sum_{i=l_b}^{k-1}\ \sum_{\substack{\nu\in\partition{[b]}\\ \nu\ne\{[b]\}}} \frac{(p-1)!}{(p-|\nu|)!}\cdot p^{i(|\nu|-1)}\cdot\prod_{\omega\in\nu} R^\omega_i.
	\end{equation}
	Let
	\[ Q_i := \sum_{\substack{ \gamma\in\partition{\{2,3,\dotsc,b\}}\\ \gamma\ne\{\{2,3,\dotsc,b\}\} }} \frac{(p-1)!}{(p-|\gamma|)!}\cdot p^{i(|\gamma|-1)}\cdot\prod_{\omega\in\gamma} R^\omega_i. \]
	Since Proposition~\ref{prop: b>1} holds for $b-1$ (for $l_2<l_3<\cdots<l_b$), we have 
	for all $i\in\mathbb{N}$ such that $i>l_b$ 
	the following:
	\begin{equation}\label{eqn:prop-f}
	\tag{\ref{prop: b>1}(f)}
	\sum_{j=l_b}^{i-1} Q_j = (p^i-p^{l_b})(p^i-p^{l_b}-p^{l_{b-1}})\cdots(p^i-p^{l_b}-\cdots-p^{l_3}) = R^{\{2,3,\dotsc,b\}}_i.
	\end{equation}
	Now we return to $\Sigma_b$, and sum together the terms whose associated $\nu\in\partition{[b]}$ correspond to the same $\gamma\in\partition{\{2,3,\dotsc,b\}}$ upon removing the number 1. Moreover, for a fixed $\gamma$, we first group together those terms with $\nu$ of the form (a) $\nu=\big(\gamma\setminus\{\tau\}\big) \cup \{ \tau\cup\{1\} \}$ for some $\tau\in\gamma$, and separately (b) $\nu=\gamma\cup\{\{1\}\}$. Notice $|\nu|=|\gamma|$ or $|\nu|=|\gamma|+1$ respectively. 
	In particular, in case (a), letting $\tau=\{w_1,\cdots,w_t\}\subseteq\{2,3,\dotsc,b\}$ then
	\[ \prod_{\omega\in\nu} R^\omega_i = \prod_{\omega\in\gamma} R^\omega_i \cdot(p^i-p^{l_{w_t}}-p^{l_{w_{t-1}}}-\cdots-p^{l_{w_1}}), \]
	while in case (b) we have $\prod_{\omega\in\nu} R^\omega_i = \prod_{\omega\in\gamma} R^\omega_i$. Finally, observe that since $\nu\in\partition{[b]}$ with $\nu\ne\{[b]\}$, then the only $\nu$ whose corresponding $\gamma$ equals $\{2,3,\dotsc,b\}$ is $\nu=\{ \{1\}, \{2,3,\dotsc,b\} \}$.
	Letting $B=\{2,3,\dotsc,b\}$, we thus obtain 
	\begin{align}
	\Sigma_b &= \sum_{i=l_b}^{k-1} \Bigg[ \sum_{\substack{ \gamma\in\partition{B}\\ \gamma\ne\{B\} }}
	\Bigg( \underbrace{\tfrac{(p-1)!}{(p-|\gamma|)!}\cdot p^{i(|\gamma|-1)}\cdot  \prod_{\omega\in\gamma} R^\omega_i \cdot \sum_{\tau\in\gamma} \Big(p^i-\sum_{w\in\tau} p^{l_w}\Big)}_{ \nu=(\gamma\setminus\{\tau\}) \cup \{ \tau\cup\{1\} \}\text{ for some }\tau\in\gamma }
	+ \underbrace{\tfrac{(p-1)!}{(p-|\gamma|-1)!}\cdot p^{i|\gamma|}\cdot \prod_{\omega\in\gamma} R^\omega_i}_{\nu=\gamma\cup\{\{1\}\}} \Bigg)\nonumber\\
	&\qquad\qquad + \underbrace{\tfrac{(p-1)!}{(p-2)!}\cdot p^i\cdot R^{\{2,3,\dotsc,b\}}_i}_{\gamma=\{B\}} 
	\Bigg]\nonumber\\
	&= \sum_{i=l_b}^{k-1} \Bigg[ \sum_{\substack{ \gamma\in\partition{B}\\ \gamma\ne\{B\} }} 
	\tfrac{(p-1)!\cdot p^{i(|\gamma|-1)} }{(p-|\gamma|)!}\prod_{\omega\in\gamma} R^\omega_i \Bigg( \sum_{\tau\in\gamma} \Big(p^i-\sum_{w\in\tau} p^{l_w}\Big) + (p-|\gamma|)p^i \Bigg) + (p-1)p^i R^{\{2,\dotsc,b\}}_i \Bigg]\nonumber\\
	&= \sum_{i=l_b}^{k-1} \Bigg[ Q_i 
	\cdot (p^{i+1}-p^{l_b}-\cdots-p^{l_2}) 
	+ (p-1)p^i\cdot R^{\{2,3,\dotsc,b\}}_i \Bigg]\nonumber\\
	&= (-p^{l_b}-\cdots-p^{l_2})\sum_{i=l_b}^{k-1} Q_i + \sum_{i=l_b}^{k-1} \left[ p^{i+1}Q_i + (p-1)p^i\sum_{j=l_b}^{i-1} Q_j \right] \quad (\text{by \eqref{eqn:prop-f}})\nonumber\\
	&= (-p^{l_b}-\cdots-p^{l_2})\sum_{i=l_b}^{k-1} Q_i + p^k \cdot \sum_{h=l_b}^{k-1} Q_h\nonumber\\
	&=(p^k-p^{l_b}-\cdots-p^{l_2}) \cdot (p^k-p^{l_b})(p^k-p^{l_b}-p^{l_{b-1}})\cdots(p^k-p^{l_b}-\cdots-p^{l_3}), \tag{\ref{prop: b>1}(g)}
	\end{align}
	which concludes the proof.
\end{proof}

The final proposition of this section gives the value of the induced character $\phi(\underline{\mathbf{u}})\up^{\fS_{ap^k}}_{P_{ap^k}}$ on a product of $p$-power cycles of distinct length where $a\in[p-1]$. We first fix some useful notation.

\begin{definition}\label{def:W}
	Suppose $a\in[p-1]$ is fixed. For a multiset $T=\{t_1,\dotsc,t_z\}$ of natural numbers, we let $W(T)$ (or $W(t_1,\dotsc,t_z)$) denote the set of elements of $P_{ap^k}$ of cycle type $\prod_{t\in T}p^t$.
\end{definition}

\begin{notation}\label{not:prop}
	For the remainder of Section~\ref{sec:char values}, we fix the following notation. Recall $l_1,l_2,\dotsc,l_b$ are fixed integers such that $1\le l_1<l_2<\cdots<l_b\le k$. Suppose further that $b\in[a]$ and $a\in[p-1]$. Fix $\phi(\underline{\mathbf{u}})\in\Lin(P_{ap^k})$ with $\underline{\mathbf{u}}=(\mathsf{u}^1,\dotsc,\mathsf{u}^a)$ and $\mathsf{u}^i\in\{0,1\}^k$ for each $i$.
	
	\smallskip
	
	For the function $C_l(\mathsf{u})$ introduced in Definition~\ref{def:CXY} (i), we simplify the notation by making the following abbreviations: 
	\[ C_t(i):=C_{l_t}(\mathsf{u}^i)\qquad \text{and}\qquad  C_rC_s\cdots C_t(i):=C_r(i)\cdot C_s(i)\cdots C_t(i). \]
	Furthermore, we let
	\[ (\Sum C_rC_s\cdots C_t) := \sum_{i=1}^a C_rC_s\cdots C_t(i) \]
	when the summation index runs through all of $\{1,2,\dotsc,a\}$. 
	\hfill$\lozenge$
\end{notation}

\begin{definition}\label{def:Mtau}
	Recall that $p_j:=p^{l_j}$. For $\tau=\{t_1<t_2<\cdots<t_z\}\subseteq[b]$ a set of distinct natural numbers, we write
	\[ M(\tau):=\begin{cases}
	p_{t_z}\cdot(p_{t_z}+p_{t_{z-1}})\cdots(p_{t_z}+\cdots+p_{t_2}) & \text{if }z\ge 2,\\
	1 & \text{if }z=1.
	\end{cases} \]
\end{definition}


\begin{theorem}\label{thm:general-a}
	Using Notation~\ref{not:prop}, we let $g\in\fS_{ap^k}$ be of cycle type $(p^{l_1}, p^{l_2},\ldots,p^{l_b})$ and we set ${\bf C}={\bf C}_{\fS_{ap^k}}(g)$. Then we have that
	\[ \frac{|P_{ap^k}|}{|{\bf C}|}\cdot\phi(\underline{\mathbf{u}})\up^{\fS_{ap^k}}(g)= \sum_{\nu\in\partition{[b]}} (-1)^{b-|\nu|}\cdot p^{k|\nu|}\cdot \prod_{\tau\in\nu}\Bigg[ M(\tau)\cdot \sum_{i=1}^a \Big( \prod_{t\in\tau} C_{t}(i)\Big) \Bigg]. \]
\end{theorem}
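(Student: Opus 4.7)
The plan is to compute the LHS directly by leveraging the decomposition $P_{ap^k} = P_{p^k} \times \cdots \times P_{p^k}$ ($a$ factors). By \eqref{eqn:induced}, every element $x \in \ccl_{\fS_{ap^k}}(g) \cap P_{ap^k}$ writes uniquely as $x_1 \cdots x_a$ with $x_i$ in the $i$-th factor. Since the $b$ non-trivial cycles of $g$ have distinct lengths, the sum over such $x$ decomposes as a double sum over functions $f: [b] \to [a]$ (recording which factor each labelled cycle lies in) and over sequences $(x_1,\dotsc,x_a)$ with $x_i$ of the prescribed cycle type in its factor. Setting $\Gamma_{l_\emptyset; k}(\mathsf{u}^i) := 1$, this yields
\[ \sum_{x \in W(l_1,\dotsc,l_b)} \phi(\underline{\mathbf{u}})(x) = \sum_{f: [b] \to [a]} \prod_{i=1}^a \Gamma_{l_{f^{-1}(i)}; k}(\mathsf{u}^i). \]
Regrouping the $f$'s by their fibre partition $\nu \in \partition{[b]}$ and induced injection $\iota: \nu \hookrightarrow [a]$, and applying Proposition~\ref{prop: b>1} to each non-trivial factor, this rewrites as
\[ \sum_{\nu \in \partition{[b]}} p^{k|\nu|} \Big( \prod_{\tau \in \nu} R_k^\tau \Big) S_{\mathrm{inj}}(\nu), \qquad S_{\mathrm{inj}}(\nu) := \sum_{\iota: \nu \hookrightarrow [a]} \prod_{\tau \in \nu} \prod_{t \in \tau} C_t(\iota(\tau)). \]

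The RHS of the theorem, on the other hand, involves the unrestricted sum $S_{\mathrm{full}}(\nu) := \prod_{\tau \in \nu} \sum_{i=1}^a \prod_{t \in \tau} C_t(i)$. I would bridge the two using the zeta transform on the partition lattice: expanding $S_{\mathrm{full}}(\nu)$ as a sum over all functions $\nu \to [a]$ and grouping by fibres gives $S_{\mathrm{full}}(\nu) = \sum_{\nu' \succeq \nu} S_{\mathrm{inj}}(\nu')$. Substituting this into the RHS of the theorem and swapping the order of summation reduces the identity to verifying $A(\nu') = \sum_{\nu \preceq \nu'} B(\nu)$ for every $\nu' \in \partition{[b]}$, where $A(\nu') := p^{k|\nu'|} \prod_{\tau \in \nu'} R_k^\tau$ and $B(\nu) := (-1)^{b-|\nu|} p^{k|\nu|} \prod_{\sigma \in \nu} M(\sigma)$. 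Both $A$ and $B$ factorise over blocks, and refinements of $\nu'$ are precisely independent refinements of each individual block; hence this reduces further to a \emph{local} identity: for each non-empty $\tau' \subseteq [b]$,
\[ p^k R_k^{\tau'} = \sum_{\nu \in \partition{\tau'}} (-1)^{|\tau'|-|\nu|} p^{k|\nu|} \prod_{\sigma \in \nu} M(\sigma). \]

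The main obstacle is proving this local identity, which I view as a polynomial identity in $y := p^k$ of degree $|\tau'|$; its LHS factors as $y \prod_{j=2}^{|\tau'|}(y - d_j)$, where $d_j := p^{l_{t_{|\tau'|}}} + \cdots + p^{l_{t_j}}$ are the ``tail sums'' of $\tau' = \{t_1 < \cdots < t_{|\tau'|}\}$. My first approach would be induction on $|\tau'|$, splitting the partition sum on the RHS according to the block containing a distinguished element (say the smallest), applying the inductive hypothesis, and verifying that the resulting expression telescopes to $p^k R_k^{\tau'}$. If that route proves unwieldy, an alternative is to match coefficients of $y^{|\tau'|-r}$ on both sides, producing the equivalent combinatorial identity
\[ \sum_{\substack{S \subseteq \{2,\dotsc,|\tau'|\}\\ |S| = r}} \prod_{j \in S} d_j \ =\  \sum_{\substack{\nu \in \partition{\tau'} \\ |\nu| = |\tau'|-r}} \prod_{\sigma \in \nu} M(\sigma), \]
an elementary-symmetric-function type identity relating tail sums to $M$-weighted sums over set partitions, which should follow from known Stirling-number/Bell-polynomial manipulations, matching the combinatorial tools advertised in the introduction.
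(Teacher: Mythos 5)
Your opening step coincides with the paper's: decomposing $x=x_1\cdots x_a$ and grouping by the partition $\nu$ of $[b]$ recording which cycle lengths land in which factor is exactly equation \eqref{eqn:KC*}, with your $S_{\mathrm{inj}}(\nu)$ equal to the paper's $\mathcal{C}^*_\nu$ and your coefficient $p^{k|\nu|}\prod_\tau R^\tau_k$ equal to $\kappa_\nu$. From there you diverge usefully. The paper inverts the relation $\mathcal{C}_\lambda=\sum_{\nu\ge\lambda}\mathcal{C}^*_\nu$ (Lemma~\ref{lem:replace}, via a Bell-polynomial determinant), substitutes into \eqref{eqn:KC*}, and then must evaluate the resulting coefficients $\theta_\lambda$ (Lemma~\ref{lem:T}, which needs both Corollary~\ref{cor:M} and a Stirling-number identity). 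You instead apply the zeta transform in the easy direction — expanding the unrestricted sums on the right-hand side of the theorem into restricted ones — and match coefficients of $S_{\mathrm{inj}}(\nu')$; since both $A$ and $B$ are multiplicative over blocks and refinements of $\nu'$ factor blockwise, this correctly reduces the whole theorem to your single-block identity $p^kR^{\tau'}_k=\sum_{\rho\in\partition{\tau'}}(-1)^{|\tau'|-|\rho|}p^{k|\rho|}\prod_{\sigma\in\rho}M(\sigma)$. This is a genuine streamlining: it bypasses Lemma~\ref{lem:replace} and the polylogarithm/Stirling identity entirely, and note that matching coefficients is only needed as a \emph{sufficient} condition, so no linear independence of the $S_{\mathrm{inj}}(\nu')$ is required.

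The gap is that your local identity is precisely Lemma~\ref{lem:M} of the paper (summed over all $i$, after multiplying by $p^k$): the coefficient-of-$y^{|\tau'|-r}$ version you state, namely $e_r(d_2,\dotsc,d_{|\tau'|})=\sum_{|\rho|=|\tau'|-r}\prod_\sigma M(\sigma)$, is exactly the statement $\mathcal{M}_{|\tau'|-1-r}(\{[n]\})=\sum_{\mu\in\cut(\{[n]\},|\tau'|-1-r)}\mathcal{M}_0(\mu)$. This is not an off-the-shelf Stirling/Bell identity — the weights $M(\sigma)$ depend on the actual elements of $\sigma$, not just $|\sigma|$, so the standard exponential-formula machinery does not apply directly — and its proof is the most delicate part of the paper's Section~\ref{sec:char values} (the double induction establishing $J_a=\Xi_a$, occupying over a page). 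Your two proposed strategies (induction on $|\tau'|$ splitting on the block of a distinguished element; or direct coefficient comparison) are reasonable starting points, and the identity is true, but as written the hardest step of the argument is deferred with an optimistic ``should follow,'' so the proof is incomplete until that identity is actually established.
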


\bigskip

\noindent Before starting with the proof of Theorem \ref{thm:general-a}, we remark that since \[ \frac{|P_{ap^k}|}{|{\bf C}|}\cdot\phi(\underline{\mathbf{u}})\up^{\fS_{ap^k}}(g)=\sum_{x\in W(l_1,\dotsc,l_b)} \phi(\underline{\mathbf{u}})(x),\]
our main aim in the next few pages will be to show that 
\[ \sum_{x\in W(l_1,\dotsc,l_b)} \phi(\underline{\mathbf{u}})(x) = \sum_{\nu\in\partition{[b]}} (-1)^{b-|\nu|}\cdot p^{k|\nu|}\cdot \prod_{\tau\in\nu}\Bigg[ M(\tau)\cdot \sum_{i=1}^a \Big( \prod_{t\in\tau} C_{t}(i)\Big) \Bigg]. \]

\noindent For example, when $b=3$, the equation above (and more generally Theorem~\ref{thm:general-a}) asserts that

\begin{small}
	\begin{multline*}
	\sum_{x\in W(l_1,l_2,l_3)} \phi(\underline{\mathbf{u}})(x) = p^{3k}(\Sum C_1)(\Sum C_2)(\Sum C_3) - p^{2k}\cdot p_2(\Sum C_1C_2)(\Sum C_3)
	-p^{2k}\cdot p_3(\Sum C_1C_3)(\Sum C_2)\\ -p^{2k}\cdot p_3(\Sum C_2C_3)(\Sum C_1) + p^k\cdot p_3(p_3+p_2)(\Sum C_1C_2C_3).
	\end{multline*}
\end{small}

\noindent We remark that the case of $a=1$ follows immediately from Lemma~\ref{lem: b=1}, so from now on we may assume $a\ge 2$. 
We now begin the proof of Theorem~\ref{thm:general-a}, which forms the remainder of this section.

\subsection{Proof of Theorem \ref{thm:general-a}}
Let $x\in W(l_1,\dotsc,l_b)$. Since $P_{ap^k}\cong P_{p^k}\times\cdots\times P_{p^k}$ ($a$ times), we can write $x=x_1\cdots x_a$ with each $x_i$ in a distinct direct factor $P_{p^k}$. Then $\phi(\underline{\mathbf{u}})(x) = \phi(\mathsf{u}^1)(x_1)\cdots\phi(\mathsf{u}^a)(x_a)$. Since $x\in W(l_1,\dotsc,l_b)$, the cycle types of $x_1,\dotsc,x_a$ 
naturally give a partition of the set $[b]=\{1,2,\dotsc,b\}$.
We regroup the sum over $x\in W(l_1,\dotsc,l_b)$ according to the corresponding partition of $[b]$ and substitute in Proposition~\ref{prop: b>1}, in order to express $\sum_x \phi(\underline{\mathbf{u}})(x)$ in terms of the function $C_t(i)$. 

\smallskip

For example, when $b=3$ we have (recall the five partitions of the set $[3]$)
\begin{small}
	\begin{align}\label{eqn:b=3 example}
	\sum_{x\in W(l_1,l_2,l_3)} \phi(\underline{\mathbf{u}})(x) &= \sum_{\substack{h,i,j\in[a]\\\text{distinct}}} \Gamma_{l_1;k}(\mathsf{u}^h)\Gamma_{l_2;k}(\mathsf{u}^i)\Gamma_{l_3;k}(\mathsf{u}^j) + \sum_{i\ne j\in[a]}\Big[ \Gamma_{l_1,l_2;k}(\mathsf{u}^i)\Gamma_{l_3;k}(\mathsf{u}^j) \nonumber\\
	&\quad + \Gamma_{l_1,l_3;k}(\mathsf{u}^i)\Gamma_{l_2;k}(\mathsf{u}^j) +
	\Gamma_{l_2,l_3;k}(\mathsf{u}^i)\Gamma_{l_1;k}(\mathsf{u}^j) \Big] + \sum_{i=1}^a \Gamma_{l_1,l_2,l_3;k}(\mathsf{u}^i) \nonumber\\
	= p^{3k}\sum_{\substack{h,i,j\in[a]\\\text{distinct}}} &C_1(h)C_2(i)C_3(j) + p^{2k}\sum_{i\ne j\in[a]}\Big[ (p^k-p_2) C_1C_2(i)C_3(j) + (p^k-p_3) C_1C_3(i)C_2(j) \nonumber\\
	&\quad + (p^k-p_3) C_2C_3(i)C_1(j) \Big] + p^k(p^k-p_3)(p^k-p_3-p_2)\sum_{i=1}^a C_1C_2C_3(i).
	\end{align}
\end{small}

\noindent For general $b$, we obtain
\begin{equation}\label{eqn:KC*}
\sum_{x\in W(l_1,\dotsc,l_b)} \phi(\underline{\mathbf{u}})(x) = \sum_{\nu\in\partition{[b]}} \kappa_\nu\cdot\mathcal{C}^*_\nu
\end{equation}
where we define the terms $\kappa_\nu$ and $\mathcal{C}^*_\nu$ as follows.

\begin{definition}\label{def:KC}
	Let $\nu\in\partition{[b]}$.
	\begin{itemize}\itemspace
		\item[(i)] Define $\kappa_\nu:= p^{k|\nu|}\cdot\prod_{\tau\in\nu} R^\tau_k$.
		
		\item[(ii)] Let $\nu=\{\nu_1,\dotsc,\nu_s\}$. Define $\mathcal{C}_\nu$ to be the following sum with unrestricted indices (i.e.~free to run independently over $1,2,\dotsc,a$)
		\[ \mathcal{C}_\nu:= \sum_{i_1,\dotsc,i_s=1}^a (\textstyle\prod_{t\in \nu_1}C_t)(i_1)\cdot (\textstyle\prod_{t\in \nu_2}C_t)(i_2)\cdots (\textstyle\prod_{t\in \nu_s}C_t)(i_s). \]
		In other words, $\mathcal{C}_\nu = \prod_{\omega\in\nu} (\Sum \textstyle\prod_{t\in\omega}C_t)$. 
		For example, 
		\[ \mathcal{C}_{\{ \{1,2,4\},\{3,5\},\{6\}\}} = (\Sum C_1C_2C_4)(\Sum C_3C_5)(\Sum C_6).\]
		
		\item[(iii)] Let $\nu=\{\nu_1,\dotsc,\nu_s\}$. Define $\mathcal{C}^*_\nu$ to be the following sum with restricted indices (i.e.~satisfying a distinctness condition)
		\[ \mathcal{C}^*_\nu:= \sum_{\substack{i_1,\dotsc,i_s\in[a]\\\text{distinct}}} (\textstyle\prod_{t\in \nu_1}C_t)(i_1)\cdot (\textstyle\prod_{t\in \nu_2}C_t)(i_2)\cdots (\textstyle\prod_{t\in \nu_s}C_t)(i_s). \]
		
		\item[(iv)] We define a partial order $\le$ on $\partition{[b]}$ as follows. For $\nu,\lambda\in\partition{[b]}$, we say $\nu\le\lambda$ if $\lambda$ may be obtained from $\nu$ by taking unions of some of the parts of $\nu$. In other words, $\nu$ is a finer partition of $[b]$ than $\lambda$.
		For example,
		\[ \nu=\big\{ \{1,2\}, \{3\}, \{4\}, \{5\}, \{6\} \big\} \le \big\{ \{1,2,3\}, \{4,5,6\} \big\} =\lambda. \]
		For $\nu\le\lambda$ and $\tau\in\lambda$, we define $m^\lambda_\nu(\tau)$ to be the number of parts of $\nu$ which make up $\tau$. In the above example, if $\tau=\{1,2,3\}$ then $m^\lambda_\nu(\tau)=2$, while if $\tau=\{4,5,6\}$ then $m^\lambda_\nu(\tau)=3$.
	\end{itemize}
\end{definition}

\noindent Equation~\eqref{eqn:KC*} provides an expression for the induced character value $\phi(\underline{\mathbf{u}})\up_{P_{ap^k}}^{\fS_{ap^k}}$ on elements of cycle type $p^{l_1}\cdots p^{l_b}$ in terms of the sums $\mathcal{C}^*_\nu$ in which the indices are restricted by imposing the distinctness condition. This condition makes equation~\eqref{eqn:KC*} difficult to apply directly for our target of distinguishing induced characters based on their indexing parameter ${\underline{\mathbf{u}}}$. It turns out to be much easier to relate the unrestricted sums $\mathcal{C}_\nu$ to ${\underline{\mathbf{u}}}$, and our first objective will be to express $\mathcal{C}^*_\nu$ in terms of various $\mathcal{C}_\lambda$.
To that end, it is easy to see that
\[ \mathcal{C}_\lambda = \sum_{\nu\ge\lambda} \mathcal{C}^*_\nu. \]
What we would like is an inverse: to express $\mathcal{C}^*_\nu$ as a sum of $\mathcal{C}_\lambda$ for certain $\lambda$, and then to substitute this into \eqref{eqn:KC*}. 
For instance, we may replace the restricted indices $i\ne j\in[a]$ by unrestricted indices in the $\nu=\{\{1,2\},\{3\}\}$ term of \eqref{eqn:b=3 example} by observing that
\begin{small}
	\[ \sum_{i\ne j\in[a]} C_1C_2(i)C_3(j) = \Bigg(\sum_{i=1}^a C_1C_2(i)\Bigg)\cdot\Bigg(\sum_{j=1}^a C_3(j)\Bigg) - \sum_{i=1}^a C_1C_2C_3(i) = (\Sum C_1C_2)(\Sum C_3) - (\Sum C_1C_2C_3).\]
\end{small}

\noindent Consider the final two terms above as corresponding to the two members $\omega$ of $\partition{\nu}$, namely $\omega=\big\{ \{ \alpha \}, \{ \beta \} \big\}$ and $\omega=\big\{ \{ \alpha,\beta \} \big\}$ respectively, where $\nu=\{\alpha,\beta\}$, $\alpha=\{1,2\}$ and $\beta=\{3\}$.
We can perform a similar replacement in general for any number of restricted indices; this is the content of the following lemma.

\begin{lemma}\label{lem:replace}
	Let $\nu\in\partition{[b]}$. Then
	\begin{equation}\label{eqn:C*}
	\mathcal{C}^*_\nu = \sum_{\nu\le\lambda\in\partition{[b]}} (-1)^{|\nu|-|\lambda|}\prod_{\tau\in\lambda} (m^\lambda_\nu(\tau)-1)!\cdot \mathcal{C}_\lambda.
	\end{equation}
\end{lemma}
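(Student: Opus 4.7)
The plan is to deduce the identity via M\"obius inversion on the partition lattice $\Pi_b$ of $[b]$, ordered by refinement precisely as in Definition~\ref{def:KC}(iv) (so that $\nu \le \lambda$ whenever $\lambda$ is obtained from $\nu$ by merging parts).

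First I would verify the companion identity
$$\mathcal{C}_\lambda \;=\; \sum_{\nu \ge \lambda} \mathcal{C}^*_\nu,$$
which is implicit in the discussion preceding the lemma. Writing $\mathcal{C}_\lambda$ as the unrestricted sum over tuples $(i_1,\dotsc,i_s) \in [a]^s$ (where $s = |\lambda|$ and $\lambda = \{\nu_1,\dotsc,\nu_s\}$), I would stratify its terms by the equivalence relation on $\{\nu_1,\dotsc,\nu_s\}$ that identifies $\nu_j$ with $\nu_{j'}$ whenever $i_j = i_{j'}$. This relation determines a coarser partition $\nu \ge \lambda$ of $[b]$, obtained by merging the parts of $\lambda$ that are grouped together; summing over all tuples inducing a given $\nu$, and noting that distinct surviving indices contribute precisely the restricted sum $\mathcal{C}^*_\nu$, establishes the identity.

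Next I would invoke the classical evaluation of the M\"obius function of $\Pi_b$. The interval $[\nu,\lambda]$ factors as the direct product $\prod_{\tau \in \lambda} \Pi_{m^\lambda_\nu(\tau)}$, each factor acting on the $m^\lambda_\nu(\tau)$ parts of $\nu$ that merge into the block $\tau \in \lambda$; combined with the well-known value $\mu_{\Pi_m}(\hat 0, \hat 1) = (-1)^{m-1}(m-1)!$, multiplicativity of $\mu$ gives
$$\mu(\nu,\lambda) \;=\; \prod_{\tau \in \lambda} (-1)^{m^\lambda_\nu(\tau)-1}(m^\lambda_\nu(\tau)-1)!.$$
Since $\sum_{\tau \in \lambda} (m^\lambda_\nu(\tau) - 1) = |\nu| - |\lambda|$, the product of signs collapses to $(-1)^{|\nu|-|\lambda|}$, and M\"obius inverting the identity of the previous paragraph then yields \eqref{eqn:C*} exactly as stated.

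The only (mild) obstacle is the bookkeeping around the direction of the refinement order: one must check that the stratification of $\mathcal{C}_\lambda$ in the first step ranges over partitions $\nu$ that are coarser than $\lambda$, so that the dual inversion formula can be applied in the correct direction, and that the signs and factorials line up with the stated coefficients. A self-contained alternative would be a direct inclusion-exclusion argument inducting on $|\nu|-|\lambda|$ over which groups of indices are forced to coincide, but this essentially reproves the partition-lattice M\"obius computation and is therefore longer without adding any new conceptual content.
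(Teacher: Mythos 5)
Your proposal is correct. The overall strategy coincides with the paper's: both start from the stratification identity $\mathcal{C}_\lambda=\sum_{\nu\ge\lambda}\mathcal{C}^*_\nu$ and then invert it over the refinement order on $\partition{[b]}$. Where you differ is in how the inverse coefficients are justified. You invoke the classical M\"obius function of the partition lattice, $\mu(\nu,\lambda)=\prod_{\tau\in\lambda}(-1)^{m^\lambda_\nu(\tau)-1}\bigl(m^\lambda_\nu(\tau)-1\bigr)!$, obtained from $\mu_{\Pi_m}(\hat 0,\hat 1)=(-1)^{m-1}(m-1)!$ and the factorisation of the interval $[\nu,\lambda]$ as a product of smaller partition lattices; the sign bookkeeping $\sum_{\tau\in\lambda}(m^\lambda_\nu(\tau)-1)=|\nu|-|\lambda|$ is exactly right. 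The paper instead proves the inversion by hand: it substitutes $\mathcal{C}_\lambda=\sum_{\alpha\ge\lambda}\mathcal{C}^*_\alpha$ into the right-hand side and shows that the coefficient of $\mathcal{C}^*_\alpha$ vanishes for $\alpha\ne\nu$ by recognising it as a product of complete Bell polynomial evaluations $B_m(-0!,-1!,\dotsc,-(m-1)!)$, which are killed for $m\ge 2$ via the determinantal form of $B_m$. That Bell-polynomial vanishing is precisely the defining recursion $\sum_{\hat 0\le\rho\le\hat 1}\mu(\hat 0,\rho)=0$ for $\Pi_m$, so the two arguments are mathematically equivalent: yours is shorter and cites a standard fact, while the paper's is self-contained at the cost of the determinant computation. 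Either way the lemma follows.
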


\begin{proof}
	Let $\mathcal{D}_\nu$ be the expression on the right hand side of \eqref{eqn:C*}. We wish to prove that $\mathcal{C}^*_\nu=\mathcal{D}_\nu$.
	
	Suppose $|\nu|=n$. By identifying the members of $\nu$ with $\{1\}, \dotsc, \{n\}$, we may without loss of generality assume $\nu=\{\{1\},\dotsc,\{n\}\}$, the minimal element of $\partition{[n]}$ under $\le$. Since $\mathcal{C}_\lambda=\sum_{\alpha\ge\lambda} \mathcal{C}^*_\alpha$, we have
	\begin{align}\label{eqn:D=C*}
	\mathcal{D}_\nu = \sum_{\alpha\in\partition{[n]}} \Bigg(\sum_{\lambda:\lambda\le\alpha}(-1)^{n-|\lambda|}\cdot\prod_{\tau\in\lambda}(m^\lambda_\nu(\tau)-1)!\Bigg)\mathcal{C}^*_\alpha.
	\end{align}
	If $\alpha=\nu$, then we obtain the term $1\cdot\mathcal{C}^*_\nu$. 
	
	Now fix $\alpha\ne\nu$. 
	The coefficient of $\mathcal{C}^*_\alpha$ in \eqref{eqn:D=C*} may be rewritten as
	\[ \sum_{\lambda:\lambda\le\alpha}(-1)^{n-|\lambda|}\cdot\prod_{\tau\in\lambda}(m^\lambda_\nu(\tau)-1)! = (-1)^{n-2|\lambda|}\cdot\prod_{\tau\in\alpha}\Bigg( \sum_{\rho\in\partition{\tau}} (-1)^{|\rho|}\prod_{t\in\rho}(|t|-1)!\Bigg). \]
	For any fixed $\tau\in\alpha$, if $|\tau|=m$ then 
	\[ \sum_{\rho\in\partition{\tau}} (-1)^{|\rho|}\prod_{t\in\rho}(|t|-1)! 
	= B_m(-0!,-1!,\dotsc,-(m-1)!). \]
	Here $B_m(x_1,\dotsc,x_m)$ denotes the $m^{th}$ complete Bell polynomial: the coefficient of each monomial $\prod_i x_i^{a_i}$ is the number of partitions of $[m]$ such that there are exactly $a_i$ members of size $i$, e.g.~$B_4(x_1,x_2,x_3,x_4)=x_1^4+6x_1^2x_2+4x_1x_3+3x_2^2+x_4$. Moreover, by the determinantal form $B_m(x_1,\dotsc,x_m)=\det \mathsf{B}$ where $\mathsf{B}$ is the $m\times m$ matrix with
	\[ \mathsf{B}_{i,i+j}=\tfrac{x_{j+1}}{j!}\ \forall j\ge 0,\quad \mathsf{B}_{i,i-1}=-i+1\quad\text{and}\quad \mathsf{B}_{i,j}=0\text{ otherwise}, \]
	it is clear that if $m\ge 2$ then $B_m(x_1,\dotsc,x_m)=0$ when $x_i=-(i-1)!$ for all $i$. Since $\alpha\ne\nu$, there exists some $\tau\in\alpha$ with $|\tau|\ge 2$. The assertion of the lemma then follows from \eqref{eqn:D=C*}.
\end{proof}

\begin{corollary}\label{cor:theta}
	We have that
	\[ \sum_{x\in W(l_1,\dotsc,l_b)} \phi(\underline{\mathbf{u}})(x) = \sum_{\lambda\in\partition{[b]}} \theta_\lambda\cdot\mathcal{C}_\lambda,\]
	where
	\[ \theta_\lambda := \sum_{\nu\le\lambda} p^{k|\nu|} \cdot\prod_{\tau\in\nu} R^\tau_k\cdot(-1)^{|\nu|-|\lambda|}\prod_{z\in\lambda}\big( m^\lambda_\nu(\tau)-1 \big)!.\]
\end{corollary}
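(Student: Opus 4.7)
The plan is that this corollary is essentially a direct combination of equation~\eqref{eqn:KC*} with Lemma~\ref{lem:replace}, followed by a swap in the order of summation. Specifically, I would start from the expression
\[ \sum_{x\in W(l_1,\dotsc,l_b)} \phi(\underline{\mathbf{u}})(x) = \sum_{\nu\in\partition{[b]}} \kappa_\nu\cdot\mathcal{C}^*_\nu \]
supplied by \eqref{eqn:KC*}, where $\kappa_\nu = p^{k|\nu|}\cdot\prod_{\tau\in\nu} R^\tau_k$ by Definition~\ref{def:KC}(i).

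Next, I would substitute into each restricted sum $\mathcal{C}^*_\nu$ the expansion from Lemma~\ref{lem:replace} in terms of the unrestricted sums $\mathcal{C}_\lambda$, namely
\[ \mathcal{C}^*_\nu = \sum_{\nu\le\lambda\in\partition{[b]}} (-1)^{|\nu|-|\lambda|}\prod_{\tau\in\lambda} (m^\lambda_\nu(\tau)-1)!\cdot \mathcal{C}_\lambda. \]
Substituting yields a double sum over all pairs $(\nu,\lambda)$ with $\nu\le\lambda$ in $\partition{[b]}$, weighted by $\kappa_\nu$, the signed factor $(-1)^{|\nu|-|\lambda|}$, the product of factorials, and $\mathcal{C}_\lambda$.

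Finally, I would interchange the order of summation so that $\lambda$ becomes the outer index and $\nu$ (ranging over partitions with $\nu\le\lambda$) becomes the inner index. Pulling out $\mathcal{C}_\lambda$ and collecting everything else gives exactly the coefficient
\[ \theta_\lambda = \sum_{\nu\le\lambda} p^{k|\nu|} \cdot\prod_{\tau\in\nu} R^\tau_k\cdot(-1)^{|\nu|-|\lambda|}\prod_{\tau\in\lambda}\big( m^\lambda_\nu(\tau)-1 \big)!, \]
as required. Since the two inputs (equation~\eqref{eqn:KC*} and Lemma~\ref{lem:replace}) have already been established, there is no substantive obstacle here: the only care needed is bookkeeping to ensure that the summation indices and the partial order $\le$ on $\partition{[b]}$ are handled correctly when reindexing, and to verify that the exponent of $p$ and the factorial term arising from Lemma~\ref{lem:replace} combine cleanly into the stated formula for $\theta_\lambda$.
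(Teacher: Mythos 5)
Your argument is correct and is exactly the paper's proof: the published version simply says that the corollary follows by substituting Lemma~\ref{lem:replace} into \eqref{eqn:KC*} and invoking Definition~\ref{def:KC}, which is precisely the substitution-and-reindexing you carry out (your rewriting of the stray index $z$ as $\tau$ in the factorial product is also the intended reading). No gaps.
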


\begin{proof}
	This follows from Definition~\ref{def:KC} and substituting Lemma~\ref{lem:replace} into \eqref{eqn:KC*}.
\end{proof}

Comparing with the assertion of Theorem~\ref{thm:general-a}, it therefore remains to prove that
\begin{equation}\label{eqn:rtp}
\theta_\lambda = (-1)^{b-|\lambda|}\cdot p^{k|\lambda|}\cdot \prod_{\tau\in\lambda} M(\tau).
\end{equation}
To show that \eqref{eqn:rtp} holds, we view $\theta_\lambda$ as a polynomial in the term $p^k$ and evaluate the coefficients as follows. Fix $\lambda\in\partition{[b]}$. For $u\ge|\lambda|$, we define the quantities $T_u$ via:
\[ \theta_\lambda=: \sum_{u\ge |\lambda|} p^{uk}\cdot T_u. \]
For example, let $b=3$ and $\lambda=\{\{1,2,3\}\}$. Then the set of those $\nu\le\lambda$ is all of $\partition{[3]}$, and so
\begin{small}
	\begin{equation*}
	\begin{array}{lll}
	\theta_\lambda&  =\sum_{\nu\le\lambda} p^{k|\nu|} \cdot\prod_{\tau\in\nu} R^\tau_k\cdot(-1)^{|\nu|-|\lambda|}\prod_{z\in\lambda}\big( m^\lambda_\nu(\tau)-1 \big)! &\\[4pt]
	&= p^k(p^k-p_3)(p^k-p_3-p_2) & \nu=\{\{1,2,3\}\}\\[4pt]
	&\ \ -p^{2k}(p^k-p_2) & \nu=\{\{1,2\},\{3\}\}\\[4pt]
	&\ \ -p^{2k}(p^k-p_3) & \nu=\{\{1,3\},\{2\}\}\\[4pt]
	&\ \ -p^{2k}(p^k-p_3) & \nu=\{\{2,3\},\{1\}\}\\[4pt]
	&\ \ + 2p^{3k} & \nu=\{\{1\},\{2\},\{3\}\}\\[4pt]
	\implies &\multicolumn{2}{l}{ T_1 = p_3(p_3+p_2),\quad T_2=(-2p_3-p_2)+p_2+p_3+p_3,\quad T_3=1-1-1-1+2. }
	\end{array}
	\end{equation*}
\end{small}

\noindent This example and \eqref{eqn:rtp} motivate the following lemma.

\begin{lemma}\label{lem:T}
	Fix $\lambda\in\partition{[b]}$. Then 
	$T_{|\lambda|} = (-1)^{b-|\lambda|}\prod_{\tau\in\lambda}M(\tau)$ and $T_u=0$ for all $u>|\lambda|$.
\end{lemma}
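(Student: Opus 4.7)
The plan is to exploit the product structure of $\theta_\lambda$ and to show that, as a polynomial in $p^k$, each of its factors has degree exactly one, yielding both conclusions of the lemma at once.

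First, I would observe that a partition $\nu\le\lambda$ is precisely a choice of partition $\nu_\tau\in\partition{\tau}$ for each $\tau\in\lambda$, and that the quantities $p^{k|\nu|}$, $(-1)^{|\nu|-|\lambda|}$, $\prod_{\sigma\in\nu}R^\sigma_k$ and $\prod_{\tau\in\lambda}(m^\lambda_\nu(\tau)-1)!$ all factor as products over $\tau\in\lambda$. Hence
\[
\theta_\lambda \;=\; \prod_{\tau\in\lambda}\theta^\tau,\qquad\text{where}\qquad \theta^\tau\,:=\sum_{\rho\in\partition{\tau}}p^{k|\rho|}(-1)^{|\rho|-1}(|\rho|-1)!\prod_{\sigma\in\rho}R^\sigma_k.
\]
The central claim will then be that $\theta^\tau = p^k\cdot(-1)^{|\tau|-1}M(\tau)$ for every $\tau$. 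Assuming this, each factor of $\theta_\lambda$ has degree exactly one in $p^k$, so $\theta_\lambda$ has degree $|\lambda|$; the coefficient of $p^{|\lambda|k}$ in $\theta_\lambda$ equals $\prod_{\tau\in\lambda}(-1)^{|\tau|-1}M(\tau)=(-1)^{b-|\lambda|}\prod_{\tau\in\lambda}M(\tau)$, and all higher coefficients $T_u$ (for $u>|\lambda|$) vanish, giving the two assertions of the lemma.

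The main obstacle is establishing the central claim. Since $(-1)^{|\rho|-1}(|\rho|-1)!$ is the classical M\"obius value at $(\rho,\hat{1})$ on the partition lattice of $\tau$, M\"obius inversion shows that $\theta^\tau = p^k(-1)^{|\tau|-1}M(\tau)$ is equivalent to the identity
\[
p^k R^\tau_k \;=\; \sum_{\rho\in\partition{\tau}}\prod_{\sigma\in\rho}\bigl[\,p^k(-1)^{|\sigma|-1}M(\sigma)\bigr].
\]
Writing $x=p^k$ and pulling out the sign via $\prod_{\sigma\in\rho}(-1)^{|\sigma|-1}=(-1)^{|\tau|-|\rho|}$, a short substitution $x\mapsto -x$ reduces this in turn to the cleaner identity
\[
F(\tau;x)\;:=\;\sum_{\rho\in\partition{\tau}}x^{|\rho|}\prod_{\sigma\in\rho}M(\sigma)\;=\;x\prod_{j=2}^{|\tau|}\bigl(x+S_j^\tau\bigr),
\]
where $\tau=\{t_1<\cdots<t_z\}$ and $S_j^\tau:=p_{t_j}+\cdots+p_{t_z}$.

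To prove this cleaner identity I would induct on $|\tau|$, with the base case immediate. For the inductive step, I split a partition $\rho\in\partition{\tau}$ according to the part $\sigma_0\ni t_1$ containing the minimum element. The key observation is that $t_1$ does not appear in any factor of $M(\sigma_0)$, since $M(\cdot)$ is built from partial sums of \emph{top} elements only; this yields $M(\{t_1\}\cup T)=\bigl(\sum_{w\in T}p_w\bigr)\cdot M(T)$ for each nonempty $T\subseteq\tau_{>1}:=\tau\setminus\{t_1\}$, while $M(\{t_1\})=1$. Swapping the order of summation and using that $\sum_{T\in\rho'}\sum_{w\in T}p_w=\sum_{w\in\tau_{>1}}p_w=S_2^\tau$ for any $\rho'\in\partition{\tau_{>1}}$, one arrives at the recursion $F(\tau;x)=(x+S_2^\tau)\cdot F(\tau_{>1};x)$, and the inductive hypothesis completes the argument.
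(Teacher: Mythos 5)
Your argument is correct, and it takes a genuinely different route from the paper's. You exploit the fact that every ingredient of $\theta_\lambda$ (the power $p^{k|\nu|}$, the sign $(-1)^{|\nu|-|\lambda|}$, the product of the $R^\sigma_k$, and the factorials $(m^\lambda_\nu(\tau)-1)!$) is multiplicative over the blocks of $\lambda$, so that $\theta_\lambda=\prod_{\tau\in\lambda}\theta^\tau$; this reduces everything to the single-block claim $\theta^\tau=p^k(-1)^{|\tau|-1}M(\tau)$, from which both conclusions of the lemma drop out at once because each factor has degree exactly one in $p^k$. The paper instead keeps the double sum intact: it extracts $T_{|\lambda|}$ directly from Corollary~\ref{cor:theta}, and kills $T_u$ for $u>|\lambda|$ by expanding the coefficients $\mathcal{M}_{u-|\nu|}(\nu)$ via the cut machinery of Lemma~\ref{lem:M} and Corollary~\ref{cor:M} and then invoking the vanishing of $\sum_{\sigma\in\partition{[n]}}(-1)^{|\sigma|-1}(|\sigma|-1)!$ for $n\ge 2$ (the M\"obius relation on the partition lattice, stated via Stirling numbers). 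It is worth noting that your ``cleaner identity'' $p^kR^\tau_k=\sum_{\rho\in\partition{\tau}}\prod_{\sigma\in\rho}\bigl[p^k(-1)^{|\sigma|-1}M(\sigma)\bigr]$ is exactly the single-block case of Corollary~\ref{cor:M} rewritten as a generating-function identity, so the two proofs ultimately rest on the same combinatorial fact; but your proof of that fact --- peeling off the minimum element and using $M(\{t_1\}\cup T)=\bigl(\sum_{w\in T}p_w\bigr)M(T)$ --- is a single clean induction, noticeably lighter than the paper's double induction establishing $J_a=\Xi_a$ together with the partial-derivative device of Remark~\ref{rem:partial}. What your route buys is economy and structure: it would allow one to dispense with Remark~\ref{rem:partial}, Lemma~\ref{lem:M}, Corollary~\ref{cor:M} and the polylogarithm identity entirely, at the mild cost of quoting the standard multiplicative M\"obius inversion on the partition lattice (with $\mu(\rho,\hat{1})=(-1)^{|\rho|-1}(|\rho|-1)!$), which you should state or cite explicitly if you write this up.
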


In order to prove Lemma~\ref{lem:T}, we introduce some definitions.

\begin{definition}\label{def:Mcuts}
	Let $n\in\mathbb{N}$, $\alpha\in\partition{[n]}$ and $i\in\mathbb{N}_0$.
	\begin{itemize}\itemspace
		\item[(i)] Define $\mathcal{M}_i(\alpha)$ to be the coefficient of $p^{ik}$ in $\prod_{\tau\in\alpha} R^\tau_k$ (see Definition~\ref{def:R}), viewed as a polynomial in $p^k$. In particular, $\mathcal{M}_0(\alpha) = \prod_{\tau\in\alpha} (-1)^{|\tau|-1}M(\tau)=(-1)^{n-|\alpha|}\prod_{\tau\in\alpha}M(\tau)$ (see Definition~\ref{def:Mtau} for $M(\tau)$). 
		
		For example, when $\alpha=\big\{ \{1,2\}, \{3,4,5\} \big\}\in \partition{[5]}$, $\mathcal{M}_0(\alpha)=-p_2p_5(p_5+p_4)$ which is the coefficient of $p^{0\cdot k}$ in $(p^k-p_2)(p^k-p_5)(p^k-p_5-p_4)$.
		
		\item[(ii)] Define $\cut(\alpha,i) = \{\beta\in\partition{[n]} : \beta\le\alpha\text{ and }|\beta|=|\alpha|+i \}$. In other words, we may think of $\beta$ as being obtained from $\alpha$ by cutting $i$ times, e.g.
		\[ \alpha=\big\{ \{1,2\}, \{3,4,5\} \big\},\qquad \beta = \big\{ \{1\}, \{2\}, \{4\}, \{3,5\} \big\} \in \cut(\alpha,2). \]
		In particular, $\cut(\{[n]\},i) = \{\beta\in\partition{[n]} : |\beta|=1+i \}$.
	\end{itemize}
\end{definition}

\begin{remark}\label{rem:partial}
	We have that 
	\begin{equation}\label{eq:product}
	\mathcal M_0(\{[n]\})=(-1)^{n-1}M([n])=(-1)^{n-1}\prod_{a=2}^n(p_n+p_{n-1}+\dots+p_a) 
	\end{equation}
	Similarly, the coefficient $\mathcal{M}_i(\{[n]\})$ is equal to $(-1)^{n-1-i}$ times the sum of all products of $n-1-i$ of the terms in the product on the right hand side of \eqref{eq:product}. Since the term $p_n$ appears in every bracket, we may express this sum as a partial derivative with respect to $p_n$ in the following way. 
	Treat $M([n])$ as a polynomial in indeterminates $p_j$, and treat $\mathcal{M}_i(\{[n]\})$ similarly. Then 
	\[ \mathcal{M}_i(\{[n]\}) = -\frac{1}{i}\frac{\partial}{\partial p_n}\mathcal{M}_{i-1}(\{[n]\}), \]
	since each choice of $n-1-i$ terms from the product on the right hand side of \eqref{eq:product} can be obtained in $i$ different ways by first choosing $n-i$ terms and then discarding one of them.\hfill$\lozenge$
\end{remark}

\begin{lemma}\label{lem:M}
	Let $n\in\mathbb{N}$ and $i\in\mathbb{N}_0$. Then
	\[ \mathcal{M}_i(\{[n]\}) = \sum_{\mu\in \cut(\{[n]\},i)} \mathcal{M}_0(\mu). \]
\end{lemma}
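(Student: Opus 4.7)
The plan is to induct on $i$. The base case $i=0$ is immediate since $\cut(\{[n]\},0)=\{\{[n]\}\}$, so both sides reduce to $\mathcal{M}_0(\{[n]\})$. For the inductive step, Remark~\ref{rem:partial} together with the inductive hypothesis gives
\[ \mathcal{M}_i(\{[n]\}) = -\frac{1}{i}\sum_{\mu\in\cut(\{[n]\},i-1)} \frac{\partial}{\partial p_n}\mathcal{M}_0(\mu). \]
For each $\mu$, only the single factor $M(\tau^*)$ in the product $\prod_{\tau\in\mu}M(\tau)$ depends on $p_n$, where $\tau^*$ is the unique part of $\mu$ containing $n$.

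The technical core of the proof is a sub-lemma asserting that for every $\tau^*=\{s_1<\dots<s_z\}$ with $s_z=n$ and $z\geq 2$,
\[ \frac{\partial}{\partial p_n} M(\tau^*) = \sum_{\substack{A\sqcup B = \tau^*\\ A,B\neq\emptyset,\ n\in B}} M(A)\,M(B). \]
I would prove this by induction on $z$, using the factorisation
\[ M(\{s\}\cup T) = M(T)\cdot \Sigma(T) \quad\text{whenever $s<\min T$ and $T\neq\emptyset$,} \]
where $\Sigma(T):=\sum_{t\in T}p_t$, which follows directly from Definition~\ref{def:Mtau} by peeling off the outermost factor. Applying this with $s=s_1$ and $T=\tau^*\setminus\{s_1\}$, together with the product rule and the inductive hypothesis, produces
\[ \frac{\partial}{\partial p_n} M(\tau^*) = \Sigma(\tau^*\setminus\{s_1\})\sum_{(A',B')} M(A')M(B') + M(\tau^*\setminus\{s_1\}), \]
where the inner sum runs over splits of $\tau^*\setminus\{s_1\}$ with $n\in B'$. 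Separately, partitioning the splits $(A,B)$ of $\tau^*$ according to whether $s_1\in A$ or $s_1\in B$, and using the factorisation to simplify $M(A)$ or $M(B)$ in each case, gives the same expression once the two cases are combined via the identity $\Sigma(A')+\Sigma(B')=\Sigma(\tau^*\setminus\{s_1\})$ together with the boundary contribution from $A=\{s_1\}$.

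With the sub-lemma in hand, the inductive step concludes by a double-counting argument: each split $(A,B)$ of $\tau^*$ with $n\in B$ refines $\mu$ to some $\nu\in\cut(\{[n]\},i)$, and conversely each $\nu\in\cut(\{[n]\},i)$ arises from exactly $i$ such pairs $(\mu,\text{split})$, namely by merging the part of $\nu$ containing $n$ with any of the other $i$ parts. Tracking the signs via $\mathcal{M}_0(\mu)=(-1)^{n-|\mu|}\prod_{\tau\in\mu}M(\tau)$, the factor of $i$ cancels against the $\tfrac{1}{i}$ from Remark~\ref{rem:partial}, yielding the desired identity.

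The main obstacle is the sub-lemma on $\partial/\partial p_n M(\tau^*)$. The naive Leibniz expansion $\partial_{p_n}M(\tau^*) = \sum_r \prod_{r'\neq r} F_{r'}$, where the $F_r$ denote the linear factors of $M(\tau^*)$, does not correspond term-by-term to the splits $(A,B)$ of $\tau^*$, so a direct bijective proof seems elusive; the recursive approach above, in which the smallest element $s_1$ is peeled off and the splits are analysed by cases, is what makes the identification tractable.
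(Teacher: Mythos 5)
Your proposal is correct. The overall skeleton coincides with the paper's: both proofs induct on $i$, both invoke the derivative relation of Remark~\ref{rem:partial} to pass from $\mathcal{M}_{i-1}$ to $\mathcal{M}_i$, and both finish with the same multiplicity count (each $\nu\in\cut(\{[n]\},i)$ arises from exactly $i$ pairs, cancelling the $\tfrac1i$). Where you genuinely diverge is in the treatment of the single-cut identity, which is the technical heart in either version. The paper isolates the case $i=1$, writes $\mathcal{M}_1(\{[n]\})$ as a sum of terms $J_a$ obtained by dropping one bracket from $M([n])$, and proves $J_a=\Xi_a$ by a separate induction on $a$ that requires a substitution trick (replacing $p_{a-2}$ by $p_{a-1}+p_{a-2}$); it then reuses this $i=1$ case inside the step for $i>1$. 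You instead prove the equivalent statement $\partial_{p_n}M(\tau^*)=\sum_{A\sqcup B=\tau^*,\,n\in B}M(A)M(B)$ directly for an arbitrary part $\tau^*$, by induction on $|\tau^*|$ via the factorisation $M(\{s\}\cup T)=M(T)\Sigma(T)$ and a case split on whether the minimum $s_1$ lands in $A$ or $B$; the identity $\Sigma(A')+\Sigma(B')=\Sigma(T)$ plus the boundary term $A=\{s_1\}$ then matches the two product-rule terms exactly, and I checked the signs and the degenerate cases ($|T|=1$, $i\ge n$) go through. This lets you handle all $i\ge1$ in one uniform inductive step rather than separating $i=1$ from $i>1$, and your induction on the part (peeling the smallest element) is arguably cleaner than the paper's induction on the dropped bracket. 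The only cosmetic gap is that you do not explicitly note the vacuous cases where the part containing $n$ is a singleton (so the derivative and the set of splits are both zero/empty), but this costs nothing.
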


\begin{proof}
	Since the maximal power of $p^k$ in $R^{[n]}_k$ is $p^{(n-1)k}$, the statement is true when $i\ge n$ as $\cut(\{[n]\},i)=\emptyset$. Moreover, the cases $n\le 2$ and $i=0$ are trivial, so we may now assume $n\ge 3$ and $i\in[n-1]$. 
	
	We proceed by induction on $i$: first suppose $i=1$.
	Let $a\in\{2,3,\dotsc,n\}$ and write $J_a$ for the expression obtained by dropping the factor $(p_n+\dotsc+p_a)$ from the product $M([n])$. 
	That is,
	\begin{align*}
	J_a:&=\frac{M([n])}{p_n+p_{n-1}+\dotsc+p_a}\\
	&=p_n\cdots(p_n+p_{n-1}+\cdots+p_{a+1})(p_n+p_{n-1}+\cdots+p_{a-1})\cdots(p_n+p_{n-1}+\cdots+p_2).
	\end{align*}
	Recalling Remark~\ref{rem:partial}, we have that $\mathcal{M}_1(\{[n]\})=\sum_{a=2}^n (-1)^{n-2}J_a$. We claim that 
	\[ J_a=\Xi_a,\]
	where we define
	\[ \Xi_a := \sum_{Y\subseteq[a-2]} M\Big(\Big\{Y\cup\{a-1\}\Big\}\Big)\cdot M\Big(\Big\{[n]\setminus(Y\cup\{a-1\})\Big\}\Big) \]
	with $[0]:=\emptyset$. The assertion of the present lemma for $i=1$ then follows by summing $J_a=\Xi_a$ over $a\in\{2,3,\dotsc,n\}$, since $\Xi_a$ is precisely $(-1)^{n-2}\mathcal{M}_0(\mu)$ summed over those $\mu\in\cut(\{[n]\},1)$ such that, writing $\mu=\{\mu_1,\mu_2\}$ where $n\in\mu_2$, we have $\max(\mu_1)=a-1$.
	
	\smallskip
	
	We prove $J_a=\Xi_a$ by induction on $a$. Consider first the base case $a=2$. In this case, observe $J_2=p_n(p_n+p_{n-1})\cdots(p_n+p_{n-1}+\cdots+p_3)$ by definition, while
	\[ \Xi_2 = M(\{1\})\cdot M\big(\big\{[n]\setminus\{1\}\big\}\big)=1\cdot p_n(p_n+p_{n-1})\cdots(p_n+p_{n-1}+\cdots+p_3).\]
	For $a>2$, we partition the sum over $Y\subseteq[a-2]$ in $\Xi_a$ according to whether the element $a-2$ belongs to $Y$, so that $\Xi_a=\Xi_{a,1}+\Xi_{a,2}$ where 
	\[ \Xi_{a,1}=\sum_{Y\subseteq[a-3]} M\Big(\Big\{Y\cup\{a-1\}\Big\}\Big)\cdot M\Big(\Big\{[n]\setminus(Y\cup\{a-1\})\Big\}\Big)\]
	and
	\[ \Xi_{a,2}=\sum_{Y\subseteq[a-3]} M\Big(\Big\{Y\cup\{a-1,a-2\}\Big\}\Big)\cdot M\Big(\Big\{[n]\setminus(Y\cup\{a-1,a-2\})\Big\}\Big).\]
	Observe now that replacing all instances of $p_{a-1}$ by $p_{a-2}$ and vice versa in the expression for $\Xi_{a,1}$ yields $\Xi_{a-1}$. But the inductive hypothesis implies that $\Xi_{a-1}=J_{a-1}$, and we note that in the product
	\[ J_{a-1}= p_n(p_n+p_{n-1})\cdots(p_n+p_{n-1}+\cdots+p_{a})(p_n+p_{n-1}+\cdots+p_{a-2})\cdots(p_n+p_{n-1}+\cdots+p_2)\]
	the terms $p_{a-1}$ and $p_{a-2}$ appear in the same brackets. Therefore swapping $p_{a-1}$ and $p_{a-2}$ in the expression for $J_{a-1}$ leaves it unchanged, and so $\Xi_{a,1}=J_{a-1}$.
	
	Next, observe that $M(\{Y\cup\{a-1,a-2\}\})$ is equal to $p_{a-1}$ times the expression obtained by replacing all occurrences of $p_{a-2}$ with $(p_{a-1}+p_{a-2})$ in $M(\{Y\cup\{a-2\}\})$ -- this is easily seen from Definition~\ref{def:Mtau}. Moreover, by our induction hypothesis we have that
	\begin{equation}\label{eqn:sub}
	\sum_{Y\subseteq[a-3]} M\Big(\Big\{Y\cup\{a-2\}\Big\}\Big)\cdot M\Big(\Big\{[n]\setminus(Y\cup\{a-1,a-2\})\Big\}\Big)=\frac{M\Big([n]\setminus\{a-1\}\Big)}{p_n+p_{n-1}+\dotsc+p_a}.
	\end{equation}
	Replacing all occurrences of $p_{a-2}$ with $(p_{a-1}+p_{a-2})$ on the right hand side of \eqref{eqn:sub} yields 
	\[ \frac{M([n])}{(p_n+p_{n-1}+\dotsc+p_a)(p_n+p_{n-1}+\dotsc+p_a+p_{a-1})} =\frac{J_{a-1}}{p_n+p_{n-1}+\dotsc+p_a}\]
	and so 
	\[ \Xi_{a,2} = p_{a-1}\frac{J_{a-1}}{p_n+p_{n-1}+\dotsc+p_a} = J_a-J_{a-1}. \]
	So $\Xi_a=\Xi_{a,1}+\Xi_{a,2}=J_a$ as required. This proves $J_a=\Xi_a$ for all $a\in\{2,3,\dotsc,n\}$, and hence the case of $i=1$ is concluded.
	
	\smallskip
		
	Now suppose $i>1$.
	As in Remark~\ref{rem:partial}, we may express $\mathcal{M}_i(\{[n]\})$ in terms of $\tfrac{\partial}{\partial p_n}\mathcal{M}_{i-1}(\{[n]\})$, and using the induction hypothesis we obtain
	\begin{align*}
	\mathcal{M}_i(\{[n]\}) &= -\frac{1}{i}\frac{\partial}{\partial p_n}\mathcal{M}_{i-1}(\{[n]\}) = -\frac{1}{i} \sum_{\mu\in\cut(\{[n]\},i-1)} \frac{\partial}{\partial p_n}\mathcal{M}_0(\mu)\\
	&= \frac{1}{i}\sum_{\mu\in\cut(\{[n]\},i-1)} \mathcal{M}_1(\{\mu_1\})\cdot \mathcal{M}_0(\{\mu_2\})\cdots\mathcal{M}_0(\{\mu_i\})
	\end{align*}
	\noindent where $\mu:=\{\mu_1,\dotsc,\mu_i\}$ and $n\in\mu_1$.
	By using the inductive hypothesis on $\mathcal{M}_1(\{\mu_1\})$, we obtain
	\begin{align*}
	\mathcal{M}_i(\{[n]\}) &= \frac{1}{i} \sum_{\mu\in\cut(\{[n]\},i-1)}\  \sum_{\substack{\nu=\{\nu_1,\nu_2\}\\\in\cut(\{\mu_1\},1)}} \mathcal{M}_0(\{\nu_1\})\cdot \mathcal{M}_0(\{\nu_2\})\cdot \prod_{j=2}^i \mathcal{M}_0(\{\mu_j\}) \\
	&= \frac{1}{i} \sum_{\gamma\in\cut(\{[n]\},i)} N_\gamma\cdot \mathcal{M}_0(\gamma)
	\end{align*}
	
	\noindent where $N_\gamma$ is the number of ways of obtaining $\gamma\in\cut(\{[n]\},i)$ by first taking some $\mu\in\cut(\{[n]\},i-1)$ then replacing the part $\mu_1$ of $\mu$ containing the number $n$ by $\nu_1$, $\nu_2$, where $\{\nu_1,\nu_2\}\in\cut(\{\mu_1\},1)$. But for all $\gamma$, we have $N_\gamma=i$ since $|\gamma|=i+1$ and $\mu$ is uniquely determined once one of $\{\tau\in\gamma:n\notin\tau\}$ is chosen to be $\nu_2$ (assuming without loss of generality that $n\in\nu_1$). This concludes the proof.
\end{proof}

\begin{corollary}\label{cor:M}
	Let $n\in\mathbb{N}$ and $i\in\mathbb{N}_0$. Let $\alpha\in\partition{[n]}$. Then
	\[ \mathcal{M}_i(\alpha) = \sum_{\beta\in \cut(\alpha,i)} \mathcal{M}_0(\beta). \]
\end{corollary}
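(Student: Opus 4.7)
The plan is to reduce Corollary~\ref{cor:M} to Lemma~\ref{lem:M} by expanding the product $\prod_{\tau\in\alpha} R^\tau_k$ as a polynomial in $p^k$ and applying Lemma~\ref{lem:M} to each factor. First, writing each factor $R^\tau_k=\sum_{j\ge 0} p^{jk}\cdot \mathcal{M}_j(\{\tau\})$ and extracting the coefficient of $p^{ik}$ from the product yields
\[ \mathcal{M}_i(\alpha) = \sum_{\substack{(j_\tau)_{\tau\in\alpha}\\ \sum_\tau j_\tau = i}}\ \prod_{\tau\in\alpha} \mathcal{M}_{j_\tau}(\{\tau\}). \]
I would then apply Lemma~\ref{lem:M} (with $[n]$ replaced by $\tau$) to each factor to rewrite $\mathcal{M}_{j_\tau}(\{\tau\}) = \sum_{\mu_\tau\in\cut(\{\tau\},j_\tau)} \mathcal{M}_0(\mu_\tau)$, so that
\[ \mathcal{M}_i(\alpha) = \sum_{\substack{(j_\tau)\\ \sum j_\tau=i}}\ \sum_{\substack{\mu_\tau\in\cut(\{\tau\},j_\tau)\\ \tau\in\alpha}}\ \prod_{\tau\in\alpha}\mathcal{M}_0(\mu_\tau). \]

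Next, I would describe the natural bijection between the set of indexing tuples above and $\cut(\alpha,i)$: given a family $(\mu_\tau)_{\tau\in\alpha}$ with $\mu_\tau\in\cut(\{\tau\},j_\tau)$ and $\sum_\tau j_\tau=i$, the union $\beta:=\bigcup_{\tau\in\alpha}\mu_\tau$ is a partition of $[n]$ refining $\alpha$ with $|\beta| = \sum_\tau|\mu_\tau| = \sum_\tau(1+j_\tau) = |\alpha|+i$, so $\beta\in\cut(\alpha,i)$; conversely, every $\beta\in\cut(\alpha,i)$ arises uniquely this way by taking $\mu_\tau$ to be the partition of $\tau$ formed by those parts of $\beta$ contained in $\tau$.

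Finally, I would invoke multiplicativity of $\mathcal{M}_0$: using the explicit formula $\mathcal{M}_0(\gamma) = (-1)^{|\bigcup\gamma|-|\gamma|}\prod_{\rho\in\gamma} M(\rho)$ from Definition~\ref{def:Mcuts}(i), a direct computation shows
\[ \prod_{\tau\in\alpha}\mathcal{M}_0(\mu_\tau) = (-1)^{\sum_\tau(|\tau|-|\mu_\tau|)}\prod_{\tau\in\alpha}\prod_{\rho\in\mu_\tau}M(\rho) = (-1)^{n-|\beta|}\prod_{\rho\in\beta}M(\rho) = \mathcal{M}_0(\beta). \]
Substituting this identity into the expression above collapses the double sum into $\sum_{\beta\in\cut(\alpha,i)}\mathcal{M}_0(\beta)$, as required.

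I do not expect any real obstacle here: Lemma~\ref{lem:M} has already handled the hard case $\alpha=\{[n]\}$, and the remaining work is the bookkeeping above. The only delicate point is the sign check in the multiplicativity of $\mathcal{M}_0$, where one must verify that $\sum_{\tau\in\alpha}(|\tau|-|\mu_\tau|) = n - |\beta|$, which is immediate from $\sum_\tau|\tau|=n$ and $\sum_\tau|\mu_\tau|=|\beta|$.
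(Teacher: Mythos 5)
Your proof is correct and uses the same ingredients as the paper's own argument: Lemma~\ref{lem:M} applied block-by-block (with the same implicit relabelling of a general block $\tau$ as $[m]$), the decomposition of $\cut(\alpha,i)$ into cuts of the individual blocks of $\alpha$, and the multiplicativity of $\mathcal{M}_0$ over disjoint blocks. The only difference is organizational: the paper runs an induction on $|\alpha|$, peeling off one block at a time via a two-fold convolution, whereas you expand the full product $\prod_{\tau\in\alpha}R^\tau_k$ in a single multi-index step; both are sound.
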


\begin{proof}
	We proceed by induction on $|\alpha|$, with Lemma~\ref{lem:M} giving the base case $|\alpha|=1$. In particular, we may now assume $|\alpha|\ge 2$. Fix some $\tau\in\alpha$. Then
	\begin{align*}
	\sum_{\beta\in \cut(\alpha,i)} \mathcal{M}_0(\beta) &= \sum_{j\ge 0}\  \sum_{\substack{\beta\in\cut(\alpha,i)\text{ where $\tau$ is}\\\text{cut exactly j times}}} \mathcal{M}_0(\beta) =\sum_{j\ge 0}\ \sum_{\substack{\gamma\in\cut(\{\tau\},j),\\\delta\in\cut(\alpha\setminus\{\tau\},i-j)}} \mathcal{M}_0(\gamma)\cdot\mathcal{M}_0(\delta)\\ 
	&= \sum_{j\ge 0} \mathcal{M}_j(\{\tau\})\cdot\mathcal{M}_{i-j}(\alpha\setminus\{\tau\}) = \mathcal{M}_i(\alpha)
	\end{align*}
	where the third equality follows from Lemma~\ref{lem:M} and the inductive hypothesis.
\end{proof}

\begin{proof}[Proof of Lemma~\ref{lem:T}]
	The value of $T_{|\lambda|}$ follows immediately from Corollary~\ref{cor:theta}, so from now on assume $u>|\lambda|$. By Corollary~\ref{cor:theta} and Definition~\ref{def:Mcuts}, the coefficient in $\theta_\lambda$ of $p^{uk}$ is
	\begin{align}\label{eqn:Tu}
	T_u &= \sum_{\substack{\nu\le\lambda\\|\nu|\le u}} \mathcal{M}_{u-|\nu|}(\nu)\cdot (-1)^{|\nu|-|\lambda|} \prod_{\tau\in\lambda}\big(m^\lambda_\nu(\tau)-1\big)!\nonumber\\
	&= \sum_{\substack{\nu\le\lambda\\|\nu|\le u}}\ \sum_{\beta\in\cut(\nu, u-|\nu|)} \mathcal{M}_0(\beta)\cdot (-1)^{|\nu|-|\lambda|} \prod_{\tau\in\lambda}\big(m^\lambda_\nu(\tau)-1\big)!\nonumber\\
	&= \sum_{\substack{\beta\le\lambda\\|\beta|=u}} \mathcal{M}_0(\beta)\cdot  \sum_{\nu: \beta\le\nu\le\lambda} (-1)^{|\nu|-|\lambda|} \prod_{\tau\in\lambda}\big(m^\lambda_\nu(\tau)-1\big)!\nonumber\\
	&= \sum_{\substack{\beta\le\lambda\\|\beta|=u}} \mathcal{M}_0(\beta)\cdot \prod_{\tau\in\lambda}\  \sum_{\sigma\in\partition{[m^\lambda_\beta(\tau)]}} (-1)^{|\sigma|-1}(|\sigma|-1)!
	\end{align}
	where the second equality follows from Corollary~\ref{cor:M}. 
	From the following well-known identity involving Stirling numbers of the second kind $S(n,j)$ and the polylogarithm $\operatorname{Li}$,
	\[ \sum_{j=1}^n S(n,j)\cdot(j-1)!w^j = (-1)^n\operatorname{Li}_{1-n}(1+\tfrac{1}{w}), \]
	we obtain 
	\[ \sum_{\sigma\in\partition{[n]}} (-1)^{|\sigma|}(|\sigma|-1)! = 0 \]
	for all $n\in\mathbb{N}$. Substituting this into \eqref{eqn:Tu} with $n=m^\lambda_\beta(\tau)$, we obtain $T_u=0$ as desired.
\end{proof}

Since \eqref{eqn:rtp} follows immediately from Lemma~\ref{lem:T}, the proof of Theorem~\ref{thm:general-a} is finally concluded.

\bigskip

\section{Proof of Theorem~\ref{thm: apk restated}}\label{sec:final}

Theorem~\ref{thm:general-a} gives us a formula with which we can evaluate $\phi(\underline{\mathbf{s}})\up^{\fS_{ap^k}}$ on elements of certain cycle types. The strategy for the proof of Theorem~\ref{thm: apk restated} is as follows.
	
We first use Theorem~\ref{thm:general-a} to translate the equality of characters \[ \phi(\underline{\mathbf{s}})\up^{\fS_{ap^k}}=\phi(\underline{\mathbf{t}})\up^{\fS_{ap^k}}\]
into a collection of equations relating the parameters $\underline{\mathbf{s}}$ and $\underline{\mathbf{t}}$ (see Corollary~\ref{cor:4.18}). These equations, however, are in terms of the functions $C$ described in Definition~\ref{def:CXY}, and it remains to relate these functions to the entries of the sequences in $\underline{\mathbf{s}}$ and $\underline{\mathbf{t}}$ themselves. This will be achieved in Proposition~\ref{prop: claim 8} following two technical lemmas.

Proposition~\ref{prop: claim 8} tells us that if $\phi(\underline{\mathbf{s}})\up^{\fS_{ap^k}}=\phi(\underline{\mathbf{t}})\up^{\fS_{ap^k}}$ then we have equality between multisets of values obtained from certain partial sums of entries of $\underline{\mathbf{s}}$ and $\underline{\mathbf{t}}$. It still remains to show these equalities of partial sums provide enough information to conclude that Theorem~\ref{thm: apk restated} holds. This will follow from a combinatorial argument encapsulated within Lemma~\ref{lem:keylemma} and Theorem~\ref{thm: part2}.

\begin{corollary}\label{cor:4.18}
	Let $a\in[p-1]$. Let $\phi(\underline{\mathbf{s}})$, $\phi(\underline{\mathbf{t}})\in\Lin(P_{ap^k})$. Suppose that $\phi(\underline{\mathbf{s}})\up^{\fS_{ap^k}}=\phi(\underline{\mathbf{t}})\up^{\fS_{ap^k}}$. Let $b\in[a]$ and suppose $l_1,l_2,\dotsc,l_b$ are integers such that $1\le l_1<\cdots<l_b\le k$. Then
	\[ \sum_{j=1}^a C_{l_1}(\mathsf{s}^j)\cdot C_{l_2}(\mathsf{s}^j)\cdots C_{l_b}(\mathsf{s}^j) = \sum_{j=1}^a C_{l_1}(\mathsf{t}^j)\cdot C_{l_2}(\mathsf{t}^j)\cdots C_{l_b}(\mathsf{t}^j).\]
\end{corollary}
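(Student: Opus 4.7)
The plan is to proceed by induction on $b$, using Theorem~\ref{thm:general-a} to evaluate both sides of the assumed equality $\phi(\underline{\mathbf{s}})\up^{\fS_{ap^k}}=\phi(\underline{\mathbf{t}})\up^{\fS_{ap^k}}$ on an element $g\in\fS_{ap^k}$ of cycle type $(p^{l_1},\dotsc,p^{l_b})$ and then isolating the top-order term. Throughout, write $C^{\mathsf{s}}_t(i):=C_{l_t}(\mathsf{s}^i)$ and $C^{\mathsf{t}}_t(i):=C_{l_t}(\mathsf{t}^i)$, and for a subset $\tau\subseteq[b]$ set $S^{\mathsf{s}}(\tau):=\sum_{i=1}^a\prod_{t\in\tau}C^{\mathsf{s}}_t(i)$, analogously $S^{\mathsf{t}}(\tau)$. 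The goal is to show $S^{\mathsf{s}}([b])=S^{\mathsf{t}}([b])$ for every admissible choice of $l_1<\dotsb<l_b$; the base case $b=1$ is immediate from Lemma~\ref{lem: b=1} (applied coordinatewise in the direct product decomposition $P_{ap^k}\cong P_{p^k}\times\cdots\times P_{p^k}$) together with the non-vanishing of $p^k$ and the relevant centraliser factor.

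For the inductive step, fix $b\ge 2$, assume the corollary for all $b'<b$, and evaluate both induced characters at $g$. By Theorem~\ref{thm:general-a},
\[ \frac{|P_{ap^k}|}{|\mathbf{C}|}\cdot\phi(\underline{\mathbf{u}})\up^{\fS_{ap^k}}(g)=\sum_{\nu\in\partition{[b]}}(-1)^{b-|\nu|}p^{k|\nu|}\prod_{\tau\in\nu}M(\tau)\cdot S^{\mathsf{u}}(\tau), \]
for $\underline{\mathbf{u}}\in\{\underline{\mathbf{s}},\underline{\mathbf{t}}\}$. The key observation is that for every partition $\nu\in\partition{[b]}$ with $\nu\neq\{[b]\}$, each block $\tau\in\nu$ satisfies $|\tau|<b$, and the inductive hypothesis, applied to the sublist $(l_t)_{t\in\tau}$, gives $S^{\mathsf{s}}(\tau)=S^{\mathsf{t}}(\tau)$. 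Hence the contributions to the formula coming from partitions $\nu\neq\{[b]\}$ are identical for $\underline{\mathbf{s}}$ and $\underline{\mathbf{t}}$.

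Taking the difference of the two instances of the formula therefore collapses to the single term indexed by $\nu=\{[b]\}$, which equals $(-1)^{b-1}p^k M([b])\,S^{\mathsf{u}}([b])$. Since by hypothesis $\phi(\underline{\mathbf{s}})\up^{\fS_{ap^k}}(g)=\phi(\underline{\mathbf{t}})\up^{\fS_{ap^k}}(g)$, we obtain
\[ (-1)^{b-1}p^k M([b])\bigl(S^{\mathsf{s}}([b])-S^{\mathsf{t}}([b])\bigr)=0. \]
Because $M([b])=p_b(p_b+p_{b-1})\cdots(p_b+\cdots+p_2)\neq 0$ and $p^k\neq 0$, we conclude $S^{\mathsf{s}}([b])=S^{\mathsf{t}}([b])$, completing the induction.

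The argument is essentially routine once Theorem~\ref{thm:general-a} is in hand; the only point to verify carefully is that, for the inductive step, the inductive hypothesis is being applied to genuinely admissible data. This is automatic here since any sublist $(l_t)_{t\in\tau}$ with $\tau\subsetneq[b]$ of size $b'<b\le a$ is itself a strictly increasing sequence of elements of $[k]$ with $b'\in[a]$, so the hypothesis of the corollary applies verbatim at the smaller index.
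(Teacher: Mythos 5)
Your proposal is correct and follows essentially the same route as the paper: induct on $b$, evaluate both induced characters on an element of cycle type $p^{l_1}\cdots p^{l_b}$ via Theorem~\ref{thm:general-a}, use the inductive hypothesis to cancel all terms from partitions $\nu\neq\{[b]\}$, and divide by the nonzero coefficient $(-1)^{b-1}p^kM([b])$. No issues.
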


\begin{proof}
	We proceed by induction on $b$. First suppose $b=1$ and let $g$ be an element of $\fS_{ap^k}$ of cycle type $p^{l_1}$. By Theorem~\ref{thm:general-a}, 
	\[ \phi(\underline{\mathbf{s}})\up^{\fS_{ap^k}}(g) = \tfrac{|{\bf C}_{\fS_{ap^k}}(g)|}{|P_{ap^k}|} \cdot \sum_{x\in W(l_1)} \phi(\underline{\mathbf{s}})(x) = \tfrac{|{\bf C}_{\fS_{ap^k}}(g)|}{|P_{ap^k}|} \cdot p^k\cdot \sum_{i=1}^a C_{l_1}(\mathsf{s}^i), \]
	and similarly for $\underline{\mathbf{t}}$. Since $\phi(\underline{\mathbf{s}})\up^{\fS_{ap^k}}=\phi(\underline{\mathbf{t}})\up^{\fS_{ap^k}}$, the assertion follows.
	
	Now suppose $b\ge 2$. Let $g\in\fS_{ap^k}$ of cycle type $p^{l_1}\cdots p^{l_b}$. By Theorem~\ref{thm:general-a},
	\begin{equation}\label{eqn:equatingC}
	\phi(\underline{\mathbf{s}})\up^{\fS_{ap^k}}(g) = \tfrac{|{\bf C}_{\fS_{ap^k}}(g)|}{|P_{ap^k}|} \cdot \sum_{\nu\in\partition{[b]}} (-1)^{b-|\nu|}p^{k|\nu|}\prod_{\tau\in\nu}\Big[M(\tau)\cdot\sum_{i=1}^a\Big(\prod_{z\in\tau} C_{l_z}(\mathsf{s}^i)\Big)\Big].
	\end{equation}
	By the inductive hypothesis, $\sum_{i=1}^a (\prod_{z\in\tau}C_{l_z}(\mathsf{s}^i)) = \sum_{i=1}^a (\prod_{z\in\tau}C_{l_z}(\mathsf{t}^i))$ for every $\tau$ satisfying $|\tau|<b$. The only $\tau$ such that $|\tau|=b$ comes from $\nu=\{[b]\}$ (giving $\tau=[b]$). 
	Using $\phi(\underline{\mathbf{s}})\up^{\fS_{ap^k}}=\phi(\underline{\mathbf{t}})\up^{\fS_{ap^k}}$ and substituting these observations into \eqref{eqn:equatingC}, we obtain
	\[ (-1)^{b-1}p^k \cdot M([b])\cdot\sum_{i=1}^a C_{l_1}(\mathsf{s}^i)\cdots C_{l_b}(\mathsf{s}^i) = (-1)^{b-1}p^k \cdot M([b])\cdot\sum_{i=1}^a C_{l_1}(\mathsf{t}^i)\cdots C_{l_b}(\mathsf{t}^i). \]
	Since $(-1)^{b-1}p^k \cdot M([b])\ne 0$, the assertion follows.
\end{proof}

From Corollary~\ref{cor:4.18} we would like to deduce Proposition~\ref{prop: claim 8} which states, informally, that certain partial sums of the sequences $\mathsf{s}^1,\dotsc,\mathsf{s}^a$ and $\mathsf{t}^1,\dotsc,\mathsf{t}^a$ are equal. Recall from Definition~\ref{def:CXY} that the part of $C_l(\mathsf{u})$ depending on $\mathsf{u}$ is 
\[ \prod_{m=1}^{l}(p\mathsf{u}_m-1) = (-1)^l\cdot(1-p)^{\sum_{m=1}^l \mathsf{u}_m}.\]
Corollary~\ref{cor:4.18} thus implies equality between certain sums of powers of $1-p$. The following technical lemma will be used (with $q=p-1$) to help translate these equalities into more directly applicable information.

For notational convenience, we denote multisets by asterisks. 

\begin{lemma}\label{lem: signed powers}
	Let $q\in\mathbb{N}_{\ge 2}$. Let $a\in[q]$ and let $\sigma_j, \tau_j\in\mathbb{N}_0$ for $j\in[a]$. If
	\[ \sum_{j=1}^a (-q)^{\sigma_j} = \sum_{j=1}^a (-q)^{\tau_j}, \]
	then either 
	\begin{itemize}
	\item[(i)] $\{\sigma_1,\dotsc,\sigma_a\}^*=\{\tau_1,\dotsc,\tau_a\}^*$ is an equality of multisets; or 
	\item[(ii)] $a=q$ and the multisets $\{\sigma_1,\dotsc,\sigma_a\}^*$ and $\{\tau_1,\dotsc,\tau_a\}^*$ are  $\{w,w-1,\dotsc,w-1\}^*$ and $\{w-2,\dotsc,w-2\}^*$ for some $w\in\mathbb{N}_{\ge 2}$.
	\end{itemize}
\end{lemma}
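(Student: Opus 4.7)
The plan is to proceed by induction on $M := \max_j \max(\sigma_j,\tau_j)$, the largest exponent appearing in either multiset. The base case $M=0$ is trivial: both multisets consist of $a$ zeros, so (i) holds. For the inductive step, write $n_k := \#\{j : \sigma_j = k\}$, $m_k := \#\{j : \tau_j = k\}$, and $c_k := n_k - m_k$, so the hypothesis becomes $\sum_{k \geq 0} c_k (-q)^k = 0$. Reducing modulo $q$ yields $c_0 \equiv 0 \pmod q$, and since $|c_0| \leq \max(n_0,m_0) \leq a \leq q$, we conclude $c_0 \in \{-q, 0, q\}$. This trichotomy drives the whole argument.

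Suppose first $c_0 = q$ (the case $c_0 = -q$ being symmetric under swapping the roles of $\sigma$ and $\tau$). Then $n_0 = q$ and $m_0 = 0$, which forces $a = q$, $\sigma_j = 0$ for all $j$, and $\tau_j \geq 1$ for all $j$. Dividing the original identity by $-q$ gives $\sum_{j=1}^q (-q)^{\tau_j - 1} = -1$. Applying the same mod-$q$ argument to $\mu_j := \tau_j - 1$, exactly $q-1$ of the $\mu_j$ must vanish, and the surviving one is forced to equal $1$. This produces case (ii) with $w = 2$, namely the multisets $\{0, \dots, 0\}^*$ and $\{2, 1, \dots, 1\}^*$.

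Suppose now $c_0 = 0$, i.e.~$n_0 = m_0$. If $n_0 = m_0 > 0$, we discard the matched zeros (obtaining an instance with the same sum identity but smaller $a' = a - n_0 < a$, and all remaining entries $\geq 1$) and then divide through by $-q$, reducing $M$ by one. By the inductive hypothesis applied to the reduced instance, either the reduced multisets are equal (so the original multisets are equal as well, after restoring the matched zeros), or case (ii) holds. The latter would force $a' = q$, which contradicts $a' < a \leq q$. If instead $n_0 = m_0 = 0$, all exponents are $\geq 1$, and we may simply divide by $-q$ to obtain the same instance with $M$ replaced by $M-1$. By the inductive hypothesis we either obtain equal multisets (hence equal multisets in the original) or case (ii) with some $w' \geq 2$ in the reduced problem; shifting exponents back up by one yields case (ii) for the original with $w = w'+1$.

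The main obstacle is propagating case (ii) correctly through the reduction; in particular one must verify that the ``matched zeros" subcase cannot actually produce case (ii), which is what rules out the apparent loss of information when zeros are removed. The key quantitative input throughout is the bound $a \leq q$, which is sharp: it is precisely what forces $c_0 \in \{-q, 0, q\}$ (rather than allowing arbitrary multiples of $q$), and it is also what guarantees that the exceptional family in case (ii) arises only in the borderline situation $a = q$.
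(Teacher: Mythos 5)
Your proof is correct, and it takes a genuinely different route from the paper. The paper inducts on $a$: it first cancels any common value between the two multisets (the inductive hypothesis for $a-1<q$ rules out the exceptional case there, so cancellation would force full equality), reduces to the situation where $\sigma_i\ne\tau_j$ for all $i,j$, normalises so that the maximal exponent is even, and then traps the alternating sum between $q^{\sigma_1-1}$ and $aq^{\sigma_1-2}\le q^{\sigma_1-1}$; forcing equality throughout that chain produces case (ii). You instead induct on the maximal exponent and argue $q$-adically: reducing the identity $\sum_k c_k(-q)^k=0$ modulo $q$ pins $c_0=n_0-m_0$ to $\{-q,0,q\}$ (using $a\le q$), the extreme values immediately yield the exceptional configuration with $w=2$, and the value $0$ lets you strip matched zeros and divide by $-q$ to descend. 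Your observation that the zero-stripping branch strictly decreases $a$ and therefore cannot produce case (ii) in the reduced instance is exactly the point that needs checking, and it is correct. Both arguments use the bound $a\le q$ in the same essential place (yours to force $c_0\in\{-q,0,q\}$, the paper's to close the inequality chain); yours has the mild advantage of locating the exceptional family constructively rather than via a chain of tight inequalities. One cosmetic remark: in your $c_0=q$ branch the roles of $\{\sigma_j\}^*$ and $\{\tau_j\}^*$ in case (ii) come out swapped relative to the literal wording of the statement, but the hypothesis is symmetric in $\sigma$ and $\tau$ and the paper's own proof also invokes a without-loss-of-generality swap, so this is not a gap.
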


\begin{proof}
	We proceed by induction on $a$. The assertion is clear if $a=1$, so now assume $2\le a\le q$, and suppose $\{\sigma_1,\dotsc,\sigma_a\}^*\ne\{\tau_1,\dotsc,\tau_a\}^*$. If $\sigma_i=\tau_j$ for some $i,j\in[a]$, then by the inductive hypothesis for $a-1\ne q$, we have $\{\sigma_1,\dotsc, \sigma_{i-1}, \sigma_{i+1},\dotsc,\sigma_a\}^*=\{\tau_1,\dotsc,\tau_{j-1},\tau_{j+1},\dotsc,\tau_a\}^*$. But then $\{\sigma_1,\dotsc,\sigma_a\}^*=\{\tau_1,\dotsc,\tau_a\}^*$, a contradiction. Thus $\sigma_i\ne \tau_j$ for all $i,j\in[a]$. Without loss of generality suppose $\sigma_1=\max\{\sigma_i,\tau_j\}_{i,j\in[a]}$, so in particular $\tau_j<\sigma_1$ for all $j$. By multiplying by $-q$ if necessary (equivalently, by adding 1 to all $\sigma_i$ and $\tau_j$), we may further assume that $\sigma_1$ is even. Then
	\[ q^{\sigma_1-1}\le q^{\sigma_1}-(a-1)q^{\sigma_1-1}\le \sum_{j=1}^a q^{\sigma_j}(-1)^{\sigma_j} = \sum_{j=1}^a q^{\tau_j}(-1)^{\tau_j} \le aq^{\sigma_1-2} \le q^{\sigma_1-1}. \]
	Hence all inequalities in the above must hold with equalities, implying that $a=q$, $\sigma_j=\sigma_1-1$ for all $j\ne 1$, and $\tau_j=\sigma_1-2$ for all $j\in[a]$. This is exactly case (ii).
\end{proof}

Unfortunately, the existence of case (ii) of Lemma~\ref{lem: signed powers} creates a modest technical hurdle. In order to rule out this case in our application of Lemma~\ref{lem: signed powers} to the equations from Corollary~\ref{cor:4.18}, we will provide one additional equation which will imply, informally, that the sequences $\mathsf{s}^1,\dotsc,\mathsf{s}^a$ and $\mathsf{t}^1,\dotsc,\mathsf{t}^a$ have equal partial sums. This is the content of Lemma~\ref{lem: claim 7} below. In order to provide this additional equation, we use one more explicit computation of induced character values, in a case not covered by Theorem~\ref{thm:general-a}: the case where $g$ is the product of two disjoint $p^l$-cycles. 
	
This computation follows very similar lines to the calculations in Section~\ref{sec:char values}. This case is no more difficult because when $n=p^k$, $p\ge 3$ and $g$ is a product of two $p^l$-cycles, then $g$ has a fixed point in $[n]$ and the inductive argument will avoid working with the wreath product structure as before (see Remark~\ref{rem: newremark}). The assumption in the following Lemma that $p$ is an odd prime is therefore crucial, since when $p=2$ the situation is complicated by the fact that a product of two disjoint cycles of length $2^{k-1}$ in $\fS_{2^k}$ has no fixed point.

\begin{lemma}\label{lem: claim 7}
	Let $p$ be an odd prime, $a\in\{2,3,\dotsc,p-1\}$, $k\in\mathbb{N}$ and $l\in[k]$. Let $\phi(\underline{\mathbf{s}})$, $\phi(\underline{\mathbf{t}})\in\Lin(P_{ap^k})$ and suppose $\phi(\underline{\mathbf{s}})\up^{\fS_{ap^k}}=\phi(\underline{\mathbf{t}})\up^{\fS_{ap^k}}$. Then
	\[ \left\{ \sum_{m=1}^l \mathsf{s}^1_m, \dotsc, \sum_{m=1}^l \mathsf{s}^a_m \right\}^* =\ \left\{ \sum_{m=1}^l \mathsf{t}^1_m, \dotsc, \sum_{m=1}^l \mathsf{t}^a_m \right\}^*. \]
\end{lemma}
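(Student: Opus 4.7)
The plan is to compute the induced character value $\phi(\underline{\mathbf{u}})\up^{\fS_{ap^k}}(g)$ for $g\in\fS_{ap^k}$ a product of two disjoint $p^l$-cycles (a case not covered by Theorem~\ref{thm:general-a}, since the two cycle lengths coincide), and then combine this with the $b=1$ instance of Corollary~\ref{cor:4.18} to isolate an equality of the form $\sum_i C_l(\mathsf{s}^i)^2=\sum_i C_l(\mathsf{t}^i)^2$. This can then be converted into the desired multiset equality via Lemma~\ref{lem: signed powers}.

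For the induced character calculation, first decompose $P_{ap^k}=P_{p^k}^{\times a}$ and split elements of $W(l,l)$ according to whether the two $p^l$-cycles lie in distinct direct factors $P_{p^k}$ or both within the same factor. This yields
\[ \sum_{x\in W(l,l)} \phi(\underline{\mathbf{u}})(x)=\sum_{1\le i<j\le a}\Gamma_{l;k}(\mathsf{u}^i)\Gamma_{l;k}(\mathsf{u}^j)+\sum_{i=1}^a\Gamma_{l,l;k}(\mathsf{u}^i). \]
The first sum is already evaluated by Lemma~\ref{lem: b=1}. To compute the unknown $\Gamma_{l,l;k}(\mathsf{u})$ I would mimic the approach of Lemma~\ref{lem: b=1}: since $p$ is odd, $2p^l<p^{l+1}$, so whenever $l<k$ any element of cycle type $p^lp^l$ in $P_{p^k}$ has a fixed point and is therefore of the form $(f_1,\dotsc,f_p;1)$. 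Distributing the two $p^l$-cycles among the components $f_i$ (either into two distinct slots or both into one slot) produces the recursion
\[ \Gamma_{l,l;k}(\mathsf{u})=\binom{p}{2}\Gamma_{l;k-1}(\mathsf{u}^-)^2+p\,\Gamma_{l,l;k-1}(\mathsf{u}^-), \]
together with the base case $\Gamma_{l,l;l}=0$. Since $C_l$ does not depend on $\mathsf{u}_{l+1},\dotsc,\mathsf{u}_k$, an induction on $k$ combined with Lemma~\ref{lem: b=1} yields the closed formula
\[ \Gamma_{l,l;k}(\mathsf{u})=\tfrac{1}{2}(p^{2k}-p^{k+l})\,C_l(\mathsf{u})^2. \]

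Substituting this back and symmetrising the double sum, one obtains
\[ \sum_{x\in W(l,l)}\phi(\underline{\mathbf{u}})(x)=\tfrac{p^{2k}}{2}\Bigl(\sum_{i=1}^a C_l(\mathsf{u}^i)\Bigr)^2-\tfrac{p^{k+l}}{2}\sum_{i=1}^a C_l(\mathsf{u}^i)^2. \]
Evaluating this for both $\underline{\mathbf{s}}$ and $\underline{\mathbf{t}}$, the hypothesis $\phi(\underline{\mathbf{s}})\up^{\fS_{ap^k}}=\phi(\underline{\mathbf{t}})\up^{\fS_{ap^k}}$ together with the $b=1$ case of Corollary~\ref{cor:4.18} (which makes the first term of the right-hand side cancel between $\underline{\mathbf{s}}$ and $\underline{\mathbf{t}}$) delivers the key identity $\sum_{i=1}^a C_l(\mathsf{s}^i)^2=\sum_{i=1}^a C_l(\mathsf{t}^i)^2$.

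Writing $S_l(\mathsf{u}):=\sum_{m=1}^l\mathsf{u}_m$ and unpacking Definition~\ref{def:CXY}, this rewrites as $\sum_i(-(p-1))^{2S_l(\mathsf{s}^i)}=\sum_i(-(p-1))^{2S_l(\mathsf{t}^i)}$. Applying Lemma~\ref{lem: signed powers} with $q=p-1$ and the even exponents $\sigma_i=2S_l(\mathsf{s}^i)$, $\tau_i=2S_l(\mathsf{t}^i)$: the exceptional case~(ii) requires both $w$ and $w-1$ to appear among the exponents on one side, i.e.\ an even and an odd exponent to coexist, which is impossible here. Case~(i) therefore applies and yields $\{2S_l(\mathsf{s}^i)\}^*=\{2S_l(\mathsf{t}^i)\}^*$, and hence the claim. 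The main obstacle is expected to be the closed-form evaluation of $\Gamma_{l,l;k}$: the wreath-product recursion is a minor variant of Lemma~\ref{lem: b=1} but the equal cycle lengths introduce the new $\binom{p}{2}$-term and demand care when combining with Corollary~\ref{cor:4.18}; everything else is then routine manipulation of the formulae already at hand.
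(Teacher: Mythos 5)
Your proposal is correct and follows essentially the same route as the paper: the same recursion $\Gamma_{l,l;k}=\binom{p}{2}\Gamma_{l;k-1}^2+p\,\Gamma_{l,l;k-1}$, the same closed form $\tfrac{1}{2}p^k(p^k-p^l)C_l(\mathsf{u})^2$, the same cancellation via the $b=1$ case of Corollary~\ref{cor:4.18}, and the same parity argument to exclude case~(ii) of Lemma~\ref{lem: signed powers}. The only point the paper treats explicitly that you gloss over is $l=k$, where the fixed-point argument does not apply; there $X(k,k)=\emptyset$ so $\Gamma_{k,k;k}=0$, which agrees with your closed formula, and the rest of the argument is unchanged.
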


\begin{proof}	
	Let $\mathsf{u}\in\{0,1\}^k$ and let $\mathsf{u}^-=(\mathsf{u}_1,\dotsc,\mathsf{u}_{k-1})$.
	First suppose $l<k$. Recall the set $X(l,l)$ from Definition~\ref{def:CXY}, and $\Gamma_{l,l;k}(\mathsf{u})$ from Definition~\ref{def:Gamma}.
	Consider some $x\in X(l,l)$. Since $x$ must have a fixed point, $x=(f_1,\dotsc,f_p;1)$ with $f_1,\dotsc,f_p\in P_{p^{k-1}}$, and \emph{either} $f_i,f_j\in Y(l)$ for some distinct $i,j\in[p]$ and $f_h=1$ for all $h\notin\{i,j\}$, \emph{or} $f_i\in Y(l,l)$ for a unique $i\in[p]$ and $f_j=1$ for all $j\ne i$. Thus by Lemma~\ref{lem:4.3.9},
	\begin{equation}\label{eqn:delta}
	\Gamma_{l,l;k}(\mathsf{u}) = \tbinom{p}{2}\cdot\Gamma_{l;k-1}(\mathsf{u}^-) \cdot\Gamma_{l;k-1}(\mathsf{u}^-) + p\cdot\Gamma_{l,l;k-1}(\mathsf{u}^-).
	\end{equation}
	Iterating \eqref{eqn:delta} and recalling that $\Gamma_{l,l;l}(-)=0$, we obtain
	\begin{equation}
	\Gamma_{l,l;k}(\mathsf{u}) = \binom{p}{2}\sum_{i=l}^{k-1}p^{k-1-i}\cdot\Gamma_{l;i}\big((\mathsf{u}_1,\dotsc,\mathsf{u}_i)\big)^2 = \binom{p}{2}\sum_{i=l}^{k-1}p^{k-1+i}\cdot C_l(\mathsf{u})^2 = \frac{p^k(p^k-p^l)}{2}\cdot C_l(\mathsf{u})^2,
	\end{equation}
	where the second equality follows from Lemma~\ref{lem: b=1}. 
	
	\smallskip
	
	Next, let $\phi(\underline{\mathbf{u}})\in\Lin(P_{ap^k})$. 
	We consider the value of $\phi(\underline{\mathbf{u}})\up^{\fS_{ap^k}}$ on elements of cycle type $p^lp^l$.
	Let $g\in\fS_{ap^k}$ have cycle type $p^lp^l$. Recall the set $W(l,l)$ from Definition~\ref{def:W}. Then 
	\[ \phi(\underline{\mathbf{u}})\up_{P_{ap^k}}^{\fS_{ap^k}}(g) = \tfrac{|{\bf C}_{\fS_{ap^k}}(g)|}{|P_{ap^k}|}\cdot \sum_{x\in W(l,l)}\phi(\underline{\mathbf{u}})(x). \]
	Consider some $x\in W(l,l)$. Since $P_{ap^k}\cong P_{p^k}\times\cdots\times P_{p^k}$ ($a$ times), we can write $x=x_1\cdots x_a$ with each $x_i$ in a distinct direct factor $P_{p^k}$. Moreover, \emph{either} $x_i,x_j\in X(l)$ for some distinct $i,j\in[a]$ and $x_h=1$ for all $h\notin\{i,j\}$, \emph{or} $x_i\in X(l,l)$ for a unique $i\in[a]$ and $x_j=1$ for all $j\ne i$. 
	Hence
	\begin{align}\label{eq: type pl pl}
	\sum_{x\in W(l,l)}\phi(\underline{\mathbf{u}})(x) &= \sum_{\{i,j\}\subseteq[a]}\  \sum_{x_i,x_j\in X(l)} \phi(\mathsf{u}^i)(x_i)\cdot\phi(\mathsf{u}^j)(x_j) + \sum_{i=1}^a \sum_{x_i\in X(l,l)} \phi(\mathsf{u}^i)(x_i)\nonumber\\
	&= \sum_{1\le i<j\le a} \Gamma_{l;k}(\mathsf{u}^i)\cdot \Gamma_{l;k}(\mathsf{u}^j) + \sum_{i=1}^a \Gamma_{l,l;k}(\mathsf{u}^i)\nonumber\\
	&= \sum_{1\le i<j\le a} p^k C_l(\mathsf{u}^i)\cdot p^k C_l(\mathsf{u}^j) + \sum_{i=1}^a \frac{p^k(p^k-p^l)}{2}\cdot C_l(\mathsf{u^i})^2\nonumber\\ 
	&= \frac{p^{2k}}{2}\Bigg[ \bigg(\sum_{i=1}^a C_l(\mathsf{u}^i)\bigg) \cdot \bigg( \sum_{j=1}^a C_l(\mathsf{u}^j)\bigg) - \sum_{i=1}^a C_l(\mathsf{u}^i)^2 \Bigg] + \frac{p^k(p^k-p^l)}{2}\cdot\sum_{i=1}^a C_l(\mathsf{u}^i)^2\nonumber\\
	&= \frac{p^{2k}}{2}\bigg(\sum_{i=1}^a C_l(\mathsf{u}^i)\bigg) \bigg(\sum_{j=1}^a C_l(\mathsf{u}^j)\bigg) -\frac{p^k\cdot p^l}{2}\cdot \sum_{i=1}^a C_l(\mathsf{u}^i)^2.
	\end{align}
	Since $\phi(\underline{\mathbf{s}})\up^{\fS_{ap^k}}(g')=\phi(\underline{\mathbf{t}})\up^{\fS_{ap^k}}(g')$ for $g'\in\fS_{ap^k}$ of cycle type $p^l$, we have by 
	Lemma~\ref{lem: b=1} and Theorem~\ref{thm:general-a} that 
	\[ \sum_{i=1}^a C_l(\mathsf{s}^i)=\sum_{i=1}^a C_l(\mathsf{t}^i). \]
	Substituting this into \eqref{eq: type pl pl} along with the fact that $\phi(\underline{\mathbf{s}})\up^{\fS_{ap^k}}(g)=\phi(\underline{\mathbf{t}})\up^{\fS_{ap^k}}(g)$ for $g\in\fS_{ap^k}$ of cycle type $p^lp^l$, we obtain
	\[ \sum_{i=1}^a C_l(\mathsf{s}^i)^2=\sum_{i=1}^a C_l(\mathsf{t}^i)^2. \]
	By Lemma~\ref{lem: b=1}, this gives
	\[ \sum_{i=1}^a \prod_{m=1}^l (p\mathsf{s}^i_m-1)^2 = \sum_{i=1}^a \prod_{m=1}^l (p\mathsf{t}^i_m-1)^2. \]
	In other words, we have
	\[ \sum_{i=1}^a (-q)^{\sigma_i} = \sum_{i=1}^a (-q)^{\tau_i}\quad\text{where}\quad q=p-1,\quad \sigma_i=2\sum_{m=1}^l \mathsf{s}^i_m\quad \text{and}\quad \tau_i=2\sum_{m=1}^l \mathsf{t}^i_m.\]
	Therefore by Lemma~\ref{lem: signed powers} we must have $\{\sigma_1,\dotsc,\sigma_a\}^*=\{\tau_1,\dotsc,\tau_a\}^*$ (case (ii) of Lemma~\ref{lem: signed powers} is not possible as $\sigma_i$ and $\tau_i$ are even for all $i$). The assertion of the present lemma for $l<k$ then follows directly.
	
	\smallskip
	
	Finally, suppose $l=k$. The assertion in this case follows by a similar argument to the case of $l<k$, the only difference being that we have the following instead of \eqref{eq: type pl pl} because $X(k,k)=\emptyset$:
	\begin{align*}
	\sum_{x\in W(k,k)}\phi(\underline{\mathbf{u}})(x) &= \sum_{\{i,j\}\subseteq[a]}\  \sum_{x_i,x_j\in X(k)} \phi(\mathsf{u}^i)(x_i)\cdot\phi(\mathsf{u}^j)(x_j) = \sum_{1\le i<j\le a} \Gamma_{k;k}(\mathsf{u}^i)\cdot \Gamma_{k;k}(\mathsf{u}^j)\\[5pt]
	&= \frac{p^{2k}}{2}\bigg(\sum_{i=1}^a C_k(\mathsf{u}^i)\bigg) \bigg(\sum_{j=1}^a C_k(\mathsf{u}^j)\bigg) -\frac{p^{2k}}{2}\cdot \sum_{i=1}^a C_k(\mathsf{u}^i)^2.
	\end{align*}
\end{proof}

We are now ready to translate the equations from Corollary~\ref{cor:4.18} into statements concerning multisets of (sums of) partial sums of the sequences  $\mathsf{s}^1,\dotsc,\mathsf{s}^a$ and $\mathsf{t}^1,\dotsc,\mathsf{t}^a$, as promised. 

\begin{proposition}\label{prop: claim 8}
	Let $a\in[p-1]$. Let $\phi(\underline{\mathbf{s}})$, $\phi(\underline{\mathbf{t}})\in\Lin(P_{ap^k})$ and suppose $\phi(\underline{\mathbf{s}})\up^{\fS_{ap^k}}=\phi(\underline{\mathbf{t}})\up^{\fS_{ap^k}}$. Let $b\in[a]$ and let $l_1,\dotsc,l_b$ be distinct integers in $[k]$. Then $\{\sigma_1,\dotsc,\sigma_a\}^* = \{\tau_1,\dotsc,\tau_a\}^*$ where
	\[ \sigma_j = \sum_{i=1}^b\sum_{m=1}^{l_i} \mathsf{s}^j_m\quad\mathrm{and}\quad \tau_j = \sum_{i=1}^b\sum_{m=1}^{l_i} \mathsf{t}^j_m\]
	for each $j\in[a]$.
\end{proposition}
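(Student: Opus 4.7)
The plan is to turn the character-value equations provided by Corollary~\ref{cor:4.18} into a single equation of the shape $\sum_{j=1}^a (-q)^{\sigma_j}=\sum_{j=1}^a (-q)^{\tau_j}$ (with $q=p-1$), then read off the multiset identity by invoking Lemma~\ref{lem: signed powers}, using Lemma~\ref{lem: claim 7} to close off the only degenerate case that can appear.

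First, I would open the expression in Corollary~\ref{cor:4.18} by substituting the explicit form of $C_l(\mathsf{u})$ from Definition~\ref{def:CXY}(i). A short calculation, noting that each factor $p\mathsf{u}_m-1$ is $-1$ or $p-1$ according as $\mathsf{u}_m$ is $0$ or $1$, lets one rewrite $\prod_{m=1}^{l}(p\mathsf{u}_m-1)=(-1)^l(-q)^{\sum_{m=1}^l \mathsf{u}_m}$. The prefactor $p^{(p^l-1)/(p-1)-2l}$ of $C_l(\mathsf{u})$ is independent of $\mathsf{u}$. Thus, after multiplying through Corollary~\ref{cor:4.18} by the common $\mathsf{u}$-independent constants, the identity becomes
\[ \sum_{j=1}^a (-q)^{\sigma_j}=\sum_{j=1}^a (-q)^{\tau_j}, \]
with $\sigma_j,\tau_j$ exactly as in the statement of the proposition.

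In the case $a=1$ (which automatically includes $p=2$, the prime for which Lemma~\ref{lem: claim 7} is unavailable) the desired multiset equality reduces to $\sigma_1=\tau_1$, and this follows immediately from Remark~\ref{rmk:a>1}, which shows via Lemma~\ref{lem: b=1} that $\mathsf{s}^1=\mathsf{t}^1$. For $a\geq 2$ we necessarily have $p\geq 3$, hence $q\geq 2$ and $a\in[q]$, so I would apply Lemma~\ref{lem: signed powers}: case (i) of that lemma is exactly the assertion we want.

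The main obstacle will be ruling out case (ii) of Lemma~\ref{lem: signed powers}. Suppose this case occurs; then $a=p-1$, and (possibly after swapping the two sides) $\{\sigma_j\}^{*}=\{w,w-1,\ldots,w-1\}^{*}$ whereas $\{\tau_j\}^{*}=\{w-2,\ldots,w-2\}^{*}$ for some $w\geq 2$. Summing the elements of these multisets gives
\[ \sum_{j=1}^a \sigma_j - \sum_{j=1}^a \tau_j = \bigl(aw-a+1\bigr) - a(w-2) = a+1 = p \neq 0. \]
On the other hand, by Lemma~\ref{lem: claim 7} applied separately to each $l_i$, we have $\sum_{j=1}^{a}\sum_{m=1}^{l_i}\mathsf{s}^j_m=\sum_{j=1}^{a}\sum_{m=1}^{l_i}\mathsf{t}^j_m$ for every $i\in[b]$; summing over $i$ yields $\sum_j\sigma_j=\sum_j\tau_j$, which contradicts the previous display. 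Hence case (ii) cannot occur, and the proposition follows.
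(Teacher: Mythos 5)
Your proposal is correct and follows essentially the same route as the paper's proof: expand Corollary~\ref{cor:4.18} via Definition~\ref{def:CXY} into the identity $\sum_j(-q)^{\sigma_j}=\sum_j(-q)^{\tau_j}$, apply Lemma~\ref{lem: signed powers}, and exclude its case (ii) using the equality of sums supplied by Lemma~\ref{lem: claim 7}. Your explicit computation that case (ii) would force the sums to differ by $p$, and your handling of $a=1$ via Remark~\ref{rmk:a>1}, merely spell out details the paper leaves implicit.
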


\begin{proof}
By Corollary~\ref{cor:4.18} and Definition~\ref{def:CXY},
$$\sum_{j=1}^a \prod_{i=1}^b \prod_{m=1}^{l_i}(p\mathsf{s}^j_m-1) = \sum_{j=1}^a \prod_{i=1}^b \prod_{m=1}^{l_i}(p\mathsf{t}^j_m-1),$$
and hence $\sum_{j=1}^a (-p+1)^{\sigma_j} = \sum_{j=1}^a (-p+1)^{\tau_j}$. The assertion follows immediately if $a=1$, and if $a>1$ (in which case $p$ is therefore necessarily odd) then it follows from Lemma~\ref{lem: signed powers}: case (ii) of Lemma~\ref{lem: signed powers} cannot occur because $\sum_{j=1}^a \sigma_j=\sum_{j=1}^a \tau_j$ by Lemma~\ref{lem: claim 7}.
\end{proof}

While the conclusion of Theorem~\ref{thm: apk restated} does not follow instantly from Proposition~\ref{prop: claim 8}, only two more combinatorial arguments are needed to deduce that $\underline{\mathbf{s}}$ is a permutation of $\underline{\mathbf{t}}$. 
Before this, we give a definition which will simplify the notation and an example to illustrate this.

\begin{definition}\label{def:f}
	Let $b,k\in\mathbb{N}$. Given natural numbers $l_1,\dotsc,l_b\le k$ and a sequence $\mathsf{s}=(\mathsf{s}_1,\dotsc,\mathsf{s}_k)\in\{0,1\}^k$, define
	\[ f(l_1,\dotsc,l_b;\mathsf{s}) = \sum_{i=1}^b \sum_{m=1}^{l_i} \mathsf{s}_m.\]
	Let $a\in\mathbb{N}$. Given an $a$-tuple $\underline{\mathbf{s}}=(\mathsf{s}^1,\dotsc,\mathsf{s}^a)$ where $\mathsf{s}^i\in\{0,1\}^k$ for all $i$, define $f(l_1,\dotsc,l_b;\underline{\mathbf{s}})$ to be the multiset
	\[f(l_1,\dotsc,l_b;\underline{\mathbf{s}}) = \{ f(l_1,\dotsc,l_b;\mathsf{s}^1), \dotsc, f(l_1,\dotsc,l_b;\mathsf{s}^a) \}^*.\]
\end{definition}

Thus the result of Proposition~\ref{prop: claim 8} may be restated as 
\[ f(l_1,\dotsc,l_b;\underline{\mathbf{s}}) = f(l_1,\dotsc,l_b;\underline{\mathbf{t}}). \]

\begin{example}
	Let $k=5$ and $a=3$. Let $b=3$ with $l_1=2$, $l_2=4$ and $l_3=5$. Then
	\[ \begin{array}{ccccc}
	\mathsf{s}^1 = (0,0,0,0,1) &&&& f(2,4,5;\mathsf{s}^1) = 0 + 0 + 1 = 1\\	\mathsf{s}^2 = (1,0,1,1,0) &&\implies && f(2,4,5;\mathsf{s}^2) = 1 + 3 + 3 = 7\\
	\mathsf{s}^3 = (1,1,0,0,1) &&&& f(2,4,5;\mathsf{s}^3) = 2 + 2 + 3 = 7
	\end{array} \]
	and so $f(2,4,5;\underline{\mathbf{s}}) = \{1,7,7\}^*$.
	\hfill$\lozenge$
\end{example}

The following lemma shows that equality of two multisets is preserved when we add 1 to the same number of elements in each multiset.

\begin{lemma}\label{lem:keylemma}
Let $\{l_1,\dots,l_b\}^*=\{m_1,\dots,m_b\}^*$. Suppose that in addition we have $$\{l_1+1,\dots,l_c+1,l_{c+1},\dots,l_b\}^*=\{m_1+1,\dots,m_c+1,m_{c+1},\dots,m_b\}^*$$
for some $c\in[b-1]$. Then $\{l_1,\dots,l_c\}^*=\{m_1,\dots,m_c\}^*.$
\end{lemma}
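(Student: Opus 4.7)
The plan is to encode each multiset as a generating polynomial and observe that the operation ``add $1$ to each chosen element'' corresponds to multiplication by the indeterminate $x$. The two hypotheses then become polynomial identities that can be subtracted to cancel the common ``unchosen'' contribution, leaving an equation from which the desired multiset equality follows at once.

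More precisely, for a finite multiset $S$ of non-negative integers, set $g_S(x) := \sum_{s\in S} x^s$ (counted with multiplicity); two such multisets are equal if and only if their generating polynomials in $\mathbb{Z}[x]$ coincide. Write $A=\{l_1,\dotsc,l_c\}^*$, $B=\{l_{c+1},\dotsc,l_b\}^*$, $A'=\{m_1,\dotsc,m_c\}^*$ and $B'=\{m_{c+1},\dotsc,m_b\}^*$. The first hypothesis of the lemma reads
\[ g_A(x)+g_B(x) \;=\; g_{A'}(x)+g_{B'}(x), \]
while the second reads
\[ x\,g_A(x)+g_B(x) \;=\; x\,g_{A'}(x)+g_{B'}(x), \]
since shifting every element of a multiset by $+1$ multiplies its generating polynomial by $x$. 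Subtracting these identities gives
\[ (x-1)\bigl(g_A(x)-g_{A'}(x)\bigr)=0, \]
and because $\mathbb{Z}[x]$ has no zero-divisors we conclude $g_A(x)=g_{A'}(x)$, i.e.\ $A=A'$.

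There is essentially no obstacle: the content of the lemma is just that the linear map ``multiply the chosen block by $x$'' is injective on the complement of the common part. Equivalently, one could phrase the same argument directly in terms of multiplicities: letting $\mu_S(i)$ denote the multiplicity of $i$ in a multiset $S$, the two hypotheses give
\[ \mu_A(i)-\mu_{A'}(i) \;=\; \mu_A(i-1)-\mu_{A'}(i-1) \quad \text{for every }i\in\mathbb{Z}, \]
and since both sides vanish for $i$ sufficiently small, induction on $i$ (or a telescoping argument) yields $\mu_A(i)=\mu_{A'}(i)$ for all $i$, which is exactly $A=A'$.
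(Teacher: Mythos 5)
Your proof is correct, and it takes a genuinely different and noticeably shorter route than the paper's. You encode each block of the multisets as a generating polynomial $g_S(x)=\sum_{s\in S}x^s$, observe that adding $1$ to the chosen block multiplies its polynomial by $x$, and subtract the two hypotheses to get $(x-1)\bigl(g_A(x)-g_{A'}(x)\bigr)=0$; cancellation in the integral domain $\mathbb{Z}[x]$ then finishes the job. (Your equivalent multiplicity/telescoping reformulation is also fine: the difference $\mu_A(i)-\mu_{A'}(i)$ is constant in $i$ and vanishes for small $i$.) The paper instead argues by contradiction with a direct combinatorial count: it sorts the first $c$ entries, takes the largest index $j$ with $l_j\ne m_j$, and compares the sizes of the truncated multisets $S_{\ge m_j+1}$ before and after the shift, reaching a counting contradiction. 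Both arguments are elementary, but yours isolates the real content of the lemma --- injectivity of multiplication by $x-1$ --- in one line, whereas the paper's is more hands-on and stays entirely within multiset bookkeeping. The only (mild) point worth flagging is that your polynomial encoding implicitly assumes the entries are non-negative integers, which holds in the paper's application (they are partial sums of $0$--$1$ sequences) and could in any case be handled with Laurent polynomials.
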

\begin{proof}
Suppose for the sake of contradiction that $\{l_1,\dots,l_c\}^*\ne\{m_1,\dots,m_c\}^*.$ Let us suppose without loss of generality that $l_1\le \dots \le l_c$ and $m_1\le\dots\le m_c$. Let $j\le c$ be maximal such that $l_j\neq m_j$, and without loss of generality we assume that $l_j<m_j$.

Given a multiset $S$ and $v\in\mathbb{N}$ we let $S_{\ge v}=\{x: x\in S,\ x\ge v\}^*$. Observe that since $\{l_1,\dots,l_b\}^*=\{m_1,\dots,m_b\}^*$ we have that $\{l_1,\dots,l_b\}_{\ge v}^*=\{m_1,\dots,m_b\}_{\ge v}^*$ for any $v$. Similarly 
$$\{l_1+1,\dots,l_c+1,l_{c+1},\dots,l_b\}_{\ge v}^*=\{m_1+1,\dots,m_c+1,m_{c+1},\dots,m_b\}_{\ge v}^*$$
for any $v$. 
We now consider $v=m_j+1$. Note that 
$$|\{l_1+1,\dots,l_c+1,l_{c+1},\dots,l_b\}_{\ge m_j+1}^*|-|\{l_1,\dots,l_b\}_{\ge m_j+1}^*|$$
$$=|\{l_{j+1}+1,\dots,l_c+1\}_{\ge m_j+1}^*|-|\{l_{j+1},\dots,l_c\}_{\ge m_j+1}^*|$$
by cancelling off equal elements in the two multisets and noting that $l_i<m_j$ for all $i\le j$. 
Moreover, 
\begin{align*}
&\quad\ |\{m_1+1,\dots,m_c+1,m_{c+1},\dots,m_b\}_{\ge m_j+1}^*|-|\{m_1,\dots,m_b\}_{\ge m_j+1}^*|\\
&\ge |\{m_{j}+1,\dots,m_c+1\}_{\ge m_j+1}^*|-|\{m_j,\dots,m_c\}_{\ge m_j+1}^*|\\ 
&= 1+|\{m_{j+1}+1,\dots,m_c+1\}_{\ge m_j+1}^*|-|\{m_{j+1},\dots,m_c\}_{\ge m_j+1}^*|\\
&=1+|\{l_{j+1}+1,\dots,l_c+1\}_{\ge m_j+1}^*|-|\{l_{j+1},\dots,l_c\}_{\ge m_j+1}^*|
\end{align*}
by maximality of $j$. In particular, we have
$$|\{m_1+1,\dots,m_c+1,m_{c+1},\dots,m_b\}_{\ge m_j+1}^*|-|\{m_1,\dots,m_b\}_{\ge m_j+1}^*|$$
$$\ge 1+|\{l_1+1,\dots,l_c+1,l_{c+1},\dots,l_b\}_{\ge m_j+1}^*|-|\{l_1,\dots,l_b\}_{\ge m_j+1}^*|$$
which is a contradiction. 
\end{proof}

We are finally ready to deduce Theorem~\ref{thm: apk restated} from the equalities of the functions $f$ given by Proposition~\ref{prop: claim 8} and Definition~\ref{def:f}.

\begin{theorem}\label{thm: part2}
Let $a,k\in\mathbb{N}$. Let $\underline{\mathbf{s}}=(\mathsf{s}^1,\dotsc,\mathsf{s}^a)$ and $\underline{\mathbf{t}}=(\mathsf{t}^1,\dotsc,\mathsf{t}^a)$ where $\mathsf{s}^i,\mathsf{t}^i\in\{0,1\}^k$ for all $i$. Suppose that for any distinct integers $l_1,l_2,\dotsc,l_b\in[k]$ such that $b\in[a]$ we have
\begin{equation}\label{eq: equal f sums}
f(l_1,\dotsc,l_b;\underline{\mathbf{s}}) = f(l_1,\dotsc,l_b;\underline{\mathbf{t}}).
\end{equation}
Then there exists a permutation $\sigma\in\fS_a$ such that $\mathsf{s}^i = \mathsf{t}^{\sigma(i)}$ for all $i$.
\end{theorem}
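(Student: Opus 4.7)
My approach is induction on $k$. The base case $k = 1$ is immediate: the hypothesis with $b = 1, l_1 = 1$ directly reads $\{\mathsf{s}^i_1\}^* = \{\mathsf{t}^i_1\}^*$, which is the desired multiset equality (since each sequence is determined by its single coordinate). For the inductive step with $k \ge 2$, I would first reduce to the truncations $\bar{\mathsf{s}}^i := (\mathsf{s}^i_1, \dotsc, \mathsf{s}^i_{k-1})$ and $\bar{\mathsf{t}}^i$: since $f(L;\mathsf{s}^i) = f(L;\bar{\mathsf{s}}^i)$ whenever $L \subseteq [k-1]$, the theorem at parameters $(a, k-1)$ applies to the truncations, providing (after reindexing) $\bar{\mathsf{s}}^i = \bar{\mathsf{t}}^i$ for every $i$. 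I would then partition $[a]$ into equivalence classes $C_j = \{i : \bar{\mathsf{s}}^i = \bar v^j\}$ and, setting $n_j := |\{i \in C_j : \mathsf{s}^i_k = 1\}|$ and $m_j := |\{i \in C_j : \mathsf{t}^i_k = 1\}|$, reduce the theorem to the sub-claim $n_j = m_j$ for every class.

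To establish the sub-claim, the main tool is Lemma~\ref{lem:keylemma}. For each $L' \subseteq [k-1]$ with $|L'| \le a - 1$, the hypothesis applied at $L = L' \cup \{k\}$ reads
\[ \{f(L'; \bar{\mathsf{s}}^i) + S_{k-1}(\bar{\mathsf{s}}^i) + \mathsf{s}^i_k : i \in [a]\}^* = \{f(L'; \bar{\mathsf{s}}^i) + S_{k-1}(\bar{\mathsf{s}}^i) + \mathsf{t}^i_k : i \in [a]\}^*. \]
Because $\bar{\mathsf{s}}^i = \bar{\mathsf{t}}^i$, the pre-increment multisets coincide term-by-term on both sides. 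Reindexing so that those $i$ with $\mathsf{s}^i_k = 1$ (respectively $\mathsf{t}^i_k = 1$) appear first on each side, Lemma~\ref{lem:keylemma} supplies
\[ \{f(L'; \bar{\mathsf{s}}^i) + \alpha^i : i \in S^{\mathsf{s}}\}^* = \{f(L'; \bar{\mathsf{s}}^i) + \alpha^i : i \in S^{\mathsf{t}}\}^*, \]
where $\alpha^i := S_{k-1}(\bar{\mathsf{s}}^i)$ and $S^{\mathsf{s}}, S^{\mathsf{t}}$ denote the index sets $\{i : \mathsf{s}^i_k = 1\}$ and $\{i : \mathsf{t}^i_k = 1\}$. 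The common cardinality $c := |S^{\mathsf{s}}| = |S^{\mathsf{t}}|$ follows from the case $L = \{k\}$, and the extremal cases $c \in \{0, a\}$ already force $n_j = m_j$ trivially.

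The hardest part will be closing the argument when $0 < c < a$. My plan is to re-interpret the displayed equality as the hypothesis of a smaller instance of Theorem~\ref{thm: part2}: encode the restricted tuples $(\bar{\mathsf{s}}^i)_{i \in S^{\mathsf{s}}}$ and $(\bar{\mathsf{s}}^i)_{i \in S^{\mathsf{t}}}$ as length-$k$ tuples by appending a constant final bit so that the shifted quantity $f(L'; \bar{\mathsf{s}}^i) + \alpha^i$ matches $f(L' \cup \{k\}; \cdot)$ on these enlarged sequences up to an irrelevant offset. This yields the Theorem~\ref{thm: part2} hypothesis at $(c, k)$ for subsets $L \subseteq [k]$ containing $k$ with $|L| \le a$; however, the hypothesis for subsets $L \subseteq [k-1]$ requires an additional combinatorial input. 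To supply it I would apply Lemma~\ref{lem:keylemma} a second time, exploiting the hypothesis at subsets of the form $L'' \cup \{k-1\}$ combined with the equivalence class structure, and use a Möbius-style cancellation on the Boolean lattice of subsets to peel off the offset $\alpha^i$. Once the full hypothesis is verified, an inductive invocation of Theorem~\ref{thm: part2} at $(c, k)$ with $c < a$ delivers the restricted multiset equality, forcing $n_j = m_j$ and completing the induction on $k$.
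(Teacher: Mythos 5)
Your opening moves are sound: restricting the hypothesis to $L\subseteq[k-1]$ and applying the statement at $(a,k-1)$ to the truncations, and then the first application of Lemma~\ref{lem:keylemma} --- with base multiset $\{f(L';\bar{\mathsf{s}}^i)+\alpha^i : i\in[a]\}^*$, which is literally identical on both sides since $\bar{\mathsf{s}}^i=\bar{\mathsf{t}}^i$, and with $|S^{\mathsf{s}}|=|S^{\mathsf{t}}|=c$ coming from $L=\{k\}$ --- correctly yields
\[
\{f(L';\bar{\mathsf{s}}^i)+\alpha^i : i\in S^{\mathsf{s}}\}^* \;=\; \{f(L';\bar{\mathsf{s}}^i)+\alpha^i : i\in S^{\mathsf{t}}\}^*
\]
for all $L'\subseteq[k-1]$ with $|L'|\le a-1$. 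The gap is everything after that. This information is the hypothesis of Theorem~\ref{thm: part2} at $(c,k)$ for the bit-appended sequences \emph{only for those $L$ containing $k$}; for $L\subseteq[k-1]$ you would need $\{f(L;\bar{\mathsf{s}}^i): i\in S^{\mathsf{s}}\}^*=\{f(L;\bar{\mathsf{s}}^i): i\in S^{\mathsf{t}}\}^*$, which (taking all such $L$) is essentially equivalent to the very statement $n_j=m_j$ you are trying to prove. The proposed remedy --- ``a M\"obius-style cancellation on the Boolean lattice to peel off the offset $\alpha^i$'' --- is not a proof and does not obviously exist: the offset $\alpha^i=\sum_{m=1}^{k-1}\mathsf{s}^i_m$ genuinely varies with $i$, and multiset equalities cannot be subtracted or M\"obius-inverted term by term. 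A second application of Lemma~\ref{lem:keylemma} requires a $0/1$ increment applied to a known number of elements of an already-established equality of base multisets, and no such configuration is exhibited. There is also a structural problem with the induction: you announce induction on $k$ but then wish to invoke the theorem at $(c,k)$ with the same $k$ and smaller $a$, which at minimum requires a double induction on $(a,k)$ --- and even granting that, the $(c,k)$ instance is unavailable because its hypothesis has not been verified.

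The difficulty is created by stripping the \emph{last} coordinate. The paper strips the \emph{first}: since every inner sum in $f(l_1,\dotsc,l_b;\mathsf{s})$ starts at $m=1$, the first bit contributes $b\,\mathsf{s}^i_1$ to $f(l_1,\dotsc,l_b;\mathsf{s}^i)$, so comparing $L$ with $L\cup\{1\}$ produces exactly the $0/1$ increment $\mathsf{s}^i_1$ required by Lemma~\ref{lem:keylemma}; crucially, once the lemma splits the index set according to $\mathsf{s}^i_1=1$ or $0$, the residual offset $b\,\mathsf{s}^i_1$ is \emph{constant} on each piece (equal to $b$ or $0$) and cancels, leaving clean instances of the hypothesis at $(|I|,k-1)$ and $(a-|I|,k-1)$ for an induction on $a+k$. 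To salvage your route you would have to formulate and prove a strengthened version of the theorem whose hypothesis involves only sets $L$ containing the top index; as written, the argument does not go through.
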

\begin{proof}
We prove the assertion for $(a,k)$ by induction on $a+k$. When $k=1$ and $a$ is arbitrary, the assertion is clear since the single term of each sequence $\mathsf{s}^i$ is simply $f(1;\mathsf{s}^i)$. When $a=1$ and $k$ is arbitrary, note that $\mathsf{s}^1_1=f(1;\mathsf{s}^1)=f(1;\mathsf{t}^1)=\mathsf{t}^1_1$, and $\mathsf{s}^1_r = f(r;\mathsf{s}^1)-f(r-1;\mathsf{s}^1) = f(r;\mathsf{t}^1)-f(r-1;\mathsf{t}^1) = \mathsf{t}^1_r$ for all $r\in\{2,3,\dotsc,k\}$, so $\mathsf{s}^1=\mathsf{t}^1$ as required. 

Now suppose $a,k\ge 2$. Write $\hat{\mathsf{s}}^i$ for the sequence $(\mathsf{s}^i_j)_{j=2}^k\in\{0,1\}^{k-1}$ and let $\underline{\hat{\mathsf{s}}} = (\hat{\mathsf{s}}^1,\dotsc,\hat{\mathsf{s}}^a)$. Define $\hat{\mathsf{t}}^i$ and $\underline{\hat{\mathsf{t}}}$ similarly. 

First suppose that $\mathsf{s}^i_1=\mathsf{t}^i_1=z$ for all $i\in[a]$, for some $z\in\{0,1\}$. Observe that for any distinct integers $l_1,\dotsc,l_b\in[k-1]$ such that $b\in[a]$ we have
\[ f(l_1+1,\dotsc,l_b+1;\mathsf{s}^i) = f(l_1,\dotsc,l_b;\hat{\mathsf{s}}^i) + bz, \]
and similarly
\[ f(l_1+1,\dotsc,l_b+1;\mathsf{t}^i) = f(l_1,\dotsc,l_b;\hat{\mathsf{t}}^i) + bz. \]
Thus by (\ref{eq: equal f sums}), we have that
\[ f(l_1,\dotsc,l_b;\underline{\hat{\mathsf{s}}}) = f(l_1,\dotsc,l_b;\underline{\hat{\mathsf{t}}}).\] 
By the inductive hypothesis for $(a,k-1)$, there exists a permutation $\sigma\in\operatorname{Sym}[a]$ such that $\hat{\mathsf{s}}^i = \hat{\mathsf{t}}^{\sigma(i)}$ for all $i\in[a]$. Therefore $\mathsf{s}^i = \mathsf{t}^{\sigma(i)}$ as required.

Otherwise, we may now suppose that not all $\mathsf{s}^i_1$ and $\mathsf{t}^i_1$ are equal. Let $I_s=\{i\in[a]: \mathsf{s}^i_1=1\}$ and define $I_t$ similarly. Since $f(1;\underline{\mathbf{s}})=f(1;\underline{\mathbf{t}})$, then $|I_s|=|I_t|$, so we may without loss of generality reorder $\underline{\mathbf{t}}$ to assume that $\mathsf{s}^i_1=\mathsf{t}^i_1$ for all $i\in[a]$. Let $I=I_s=I_t$ and note that $|I|\in[a-1]$.

For any distinct integers $l_1,\dotsc,l_b\in[k-1]$ such that $b\in[a-1]$ we have
\[ f(l_1+1,\dotsc,l_b+1;\mathsf{s}^i) = b\mathsf{s}^i_1 + f(l_1,\dotsc,l_b;\hat{\mathsf{s}}^i) \]
and
\[ f(l_1+1,\dotsc,l_b+1;\mathsf{t}^i) = b\mathsf{s}^i_1 + f(l_1,\dotsc,l_b;\hat{\mathsf{t}}^i) \]
since $\mathsf{s}^i_1=\mathsf{t}^i_1$. Thus
\begin{equation}\label{eq:1}
\begin{split}
f(l_1+1,&\dotsc,l_b+1;\underline{\mathbf{s}}) = \{b\mathsf{s}^i_1+ f(l_1,\dotsc,l_b;\hat{\mathsf{s}}^i)\ :\ i\in[a] \}^*\\
&= f(l_1+1,\dotsc,l_b+1;\underline{\mathbf{t}}) = \{b\mathsf{s}^i_1+ f(l_1,\dotsc,l_b;\hat{\mathsf{t}}^i)\ :\ i\in[a] \}^*.
 \end{split}
\end{equation}
In addition,
\begin{equation*}
\begin{split}
f(1, &l_1+1,\dotsc,l_b+1;\underline{\mathbf{s}}) = \{(b+1)\mathsf{s}^i_1 + f(l_1,\dotsc,l_b;\hat{\mathsf{s}}^i)\ :\ i\in[a] \}^*\\
&=f(1,l_1+1,\dotsc,l_b+1;\underline{\mathbf{t}}) = \{(b+1)\mathsf{s}^i_1 + f(l_1,\dotsc,l_b;\hat{\mathsf{t}}^i)\ :\ i\in[a] \}^*.
\end{split}
\end{equation*}
Therefore, by Lemma~\ref{lem:keylemma}, we have that
$$\{b+f(l_1,\dotsc,l_b;\hat{\mathsf{s}}^i)\ :\ i\in I \}^* = \{b+f(l_1,\dotsc,l_b;\hat{\mathsf{t}}^i)\ :\ i\in I \}^*,$$
which implies 
\begin{equation}\label{eq:2}
\{f(l_1,\dotsc,l_b;\hat{\mathsf{s}}^i)\ :\ i\in I \}^* = \{f(l_1,\dotsc,l_b;\hat{\mathsf{t}}^i)\ :\ i\in I \}^*
\end{equation}
and therefore by (\ref{eq:1}) also
\begin{equation}\label{eq:3}
\{f(l_1,\dotsc,l_b;\hat{\mathsf{s}}^i)\ :\ i\in [a]\setminus I \}^* = \{f(l_1,\dotsc,l_b;\hat{\mathsf{t}}^i)\ :\ i\in [a]\setminus I \}^*.
\end{equation}
Let $\hat{\underline{\mathbf{s}}}^{(1)}$ and $\hat{\underline{\mathbf{s}}}^{(0)}$ be the sequences $(\hat{\mathsf{s}}^i)_{i\in I}$ and $(\hat{\mathsf{s}}^i)_{i\notin I}$ respectively, and define $\hat{\underline{\mathbf{t}}}^{(1)}$ and $\hat{\underline{\mathbf{t}}}^{(0)}$ similarly. Since~(\ref{eq:2}) and~(\ref{eq:3}) hold for any valid choice of $\{l_i\}$, this tells us that for any distinct integers $l_1,\dotsc,l_b\in[k-1]$ such that $b\in[a-1]$ we have
$$f(l_1,\dotsc,l_b;\hat{\underline{\mathbf{s}}}^{(1)}) = f(l_1,\dotsc,l_b;\hat{\underline{\mathbf{t}}}^{(1)})\quad\mathrm{and}\quad f(l_1,\dotsc,l_b;\hat{\underline{\mathbf{s}}}^{(0)}) = f(l_1,\dotsc,l_b;\hat{\underline{\mathbf{t}}}^{(0)}).$$
Since $|I|$ and $a-|I|$ are both at most $a-1$, we may apply the inductive hypotheses for $(|I|,k-1)$ and $(a-|I|,k-1)$ to obtain permutations $\sigma_1\in\operatorname{Sym}I$ such that $\hat{\mathsf{s}}^i=\hat{\mathsf{t}}^{\sigma_1(i)}$ for all $i\in I$ (and hence $\mathsf{s}^i=\mathsf{t}^{\sigma_1(i)}$) and $\sigma_0\in\operatorname{Sym}([a]\setminus I)$ such that $\hat{\mathsf{s}}^i=\hat{\mathsf{t}}^{\sigma_0(i)}$ for all $i\notin I$ (and hence $\mathsf{s}^i=\mathsf{t}^{\sigma_0(i)}$). Finally, let $\sigma=\sigma_0\cdot\sigma_1\in\operatorname{Sym}[a]$, so $\mathsf{s}^i=\mathsf{t}^{\sigma(i)}$ for all $i$ as desired.
\end{proof}

\begin{proof}[Proof of Theorem~\ref{thm: apk restated}]
This follows from Proposition~\ref{prop: claim 8} and Theorem~\ref{thm: part2}.
\end{proof}

\begin{proof}[Proof of Theorem~\ref{thm: apk}]
	This is equivalent to Theorem~\ref{thm: apk restated}.
\end{proof}

\bigskip


\appendix
\section{Examples for Proposition~\ref{prop: b>1}}\label{sec:appendix}

We follow the notation in the statement of Proposition~\ref{prop: b>1}.

\begin{example}\label{ex:gamma b=2}
	In this example, we prove that Proposition~\ref{prop: b>1} holds when $b=2$, i.e.~that
	\[ \Gamma_{l_1,l_2;k}(\mathsf{u}) = p^k\cdot C_{l_1}(\mathsf{u})\cdot C_{l_2}(\mathsf{u})\cdot (p^k-p^{l_2}). \]

	First, we have by combining Definitions~\ref{def:Gamma} and~\ref{def:CXY} that
	\begin{equation}\label{eqn:b=2 initial}
	\tag{\ref{ex:gamma b=2}(a)}
	\Gamma_{l_1,l_2;k}(\mathsf{u}) = \sum_{x\in X(l_1,l_2)}\phi(\mathsf{u})(x).
	\end{equation}
	Consider some $x\in X(l_1,l_2)$. Since $l_1<l_2<k$, then $x$ must have a fixed point. Therefore $x=(f_1,\dotsc,f_p;1)$ with $f_1,\dotsc,f_p\in P_{p^{k-1}}$, and
	\begin{itemize}
		\item[(i)] \emph{either} $f_i\in Y(l_1)$, $f_j\in Y(l_2)$ for some distinct $i,j\in[p]$ and $f_h=1$ for all $h\notin\{i,j\}$,
		\item[(ii)] \emph{or} $f_i\in Y(l_1,l_2)$ for a unique $i\in[p]$ and $f_j=1$ for all $j\ne i$.
	\end{itemize}
	By Lemma~\ref{lem:4.3.9}, in case (i) we have $\phi(\mathsf{u})(x) = \phi(\mathsf{u}^-)(f_i)\cdot \phi(\mathsf{u}^-)(f_j)$, while in case (ii) we have $\phi(\mathsf{u})(x) = \phi(\mathsf{u}^-)(f_i)$.
	Since there are $p(p-1)$ choices for the ordered pair $(i,j)$ in case (i) and $p$ choices for $i$ in case (ii),
	\begin{align}\label{eqn:b=2 cycle types}
	\tag{\ref{ex:gamma b=2}(b)}
	\Gamma_{l_1,l_2;k}(\mathsf{u}) 
	&= p(p-1)\cdot\Gamma_{l_1;k-1}(\mathsf{u}^-)\cdot\Gamma_{l_2;k-1}(\mathsf{u}^-) + p\cdot\Gamma_{l_1,l_2;k-1}(\mathsf{u}^-).
	\end{align}
	Iterating \eqref{eqn:b=2 cycle types} and recalling that $\Gamma_{l_1,l_2;l_2}(-)=0$, we therefore have that
	\begin{equation}\label{eqn:b=2c}
	\tag{\ref{ex:gamma b=2}(c)}
	\Gamma_{l_1,l_2;k}(\mathsf{u}) = \sum_{i=l_2}^{k-1} p^{k-i}(p-1) \cdot\Gamma_{l_1;i}\big((\mathsf{u}_1,\dotsc,\mathsf{u}_i)\big)\cdot\Gamma_{l_2;i}\big((\mathsf{u}_1,\dotsc,\mathsf{u}_i)\big),
	\end{equation}
	From Lemma~\ref{lem: b=1}, we know that $\Gamma_{l_j;i}\big((\mathsf{u}_1,\dotsc,\mathsf{u}_i)\big) = p^iC_{l_j}(\mathsf{u})$ for any $j$, and so
	\begin{equation}\label{eqn:b=2d}
	\tag{\ref{ex:gamma b=2}(d)}
	\Gamma_{l_1,l_2;k}(\mathsf{u}) = \sum_{i=l_2}^{k-1} p^{k-i}(p-1)\cdot p^i C_{l_1}(\mathsf{u}) \cdot p^i C_{l_2}(\mathsf{u}).
	\end{equation}
	Define $\Sigma_2$ by setting $\Gamma_{l_1,l_2;k}(\mathsf{u}) = \Sigma_2\cdot p^k C_{l_1}(\mathsf{u}) C_{l_2}(\mathsf{u})$, i.e.
	\begin{equation}\label{eqn:b=2e}
	\tag{\ref{ex:gamma b=2}(e)}
	\Sigma_2:= \sum_{i=l_2}^{k-1} p^{-i}(p-1)\cdot p^i\cdot p^i.
	\end{equation}
	Since
	\begin{equation}\label{eqn:b=2}
	\tag{\ref{ex:gamma b=2}(f)}
	\Sigma_2 = \sum_{i=l_2}^{k-1} (p-1) p^i = p^k-p^{l_2},
	\end{equation}
	we obtain
	\[  \Gamma_{l_1,l_2;k}(\mathsf{u}) = \sum_{i=l_2}^{k-1} p^{k-i}(p-1)\cdot p^i C_{l_1}(\mathsf{u}) \cdot p^i C_{l_2}(\mathsf{u})
	= p^k\cdot C_{l_1}(\mathsf{u})\cdot C_{l_2}(\mathsf{u})\cdot (p^k-p^{l_2}). \]
	This concludes the case of $b=2$.\hfill$\lozenge$
\end{example}

\begin{example}\label{ex:gamma b=3}
	We prove that Proposition~\ref{prop: b>1} holds when $b=3$, i.e.~that
	\[ \Gamma_{l_1,l_2,l_3;k}(\mathsf{u}) = p^k\cdot C_{l_1}(\mathsf{u})\cdot C_{l_2}(\mathsf{u})\cdot C_{l_3}(\mathsf{u})\cdot(p^k-p^{l_3})(p^k-p^{l_3}-p^{l_2}), \]
	using the case of $b=2$. We thereby illustrate a specific example of the inductive step in the proof of Proposition~\ref{prop: b>1}. In the calculations below, we will associate certain terms with partitions $\nu$ of the set $[b]=\{1,2,3\}$. This is to help illustrate the analogy between the case of $b=3$ and the general proof for all $b\ge 3$ in Section~\ref{sec:char values}. Moreover, equations (\ref{ex:gamma b=3}(a)--(f)) below are labelled as such to illustrate their correspondence to equations (\ref{ex:gamma b=2}(a)--(f)) above.
	
	\smallskip
	
	From Definitions~\ref{def:Gamma} and~\ref{def:CXY}, we have
	\begin{equation}\label{eqn:eg1}
	\tag{\ref{ex:gamma b=3}(a)}
	\Gamma_{l_1,l_2,l_3;k}(\mathsf{u}) = \sum_{x\in X(l_1,l_2,l_3)}\phi(\mathsf{u})(x).
	\end{equation}
	Consider some $x\in X(l_1,l_2,l_3)$. 
	Since $x$ has a fixed point, $x$ must be of the form $x=(f_1,\dots,f_p;1)$ with $f_1,\dotsc,f_p\in P_{p^{k-1}}$, and so
	\[ \phi(\mathsf{u})(x) = \phi(\mathsf{u}^-)(f_1)\cdot \phi(\mathsf{u}^-)(f_2)\cdots\phi(\mathsf{u}^-)(f_p) \]
	by Lemma~\ref{lem:4.3.9}.
	Moreover, the product of the cycle types of $f_i$ over all $i$ equals $p^{l_1}p^{l_2}p^{l_3}$ since $x\in X(l_1,l_2,l_3)$. 
	We can thus associate to $x$ a partition $\nu$ of the set $\{1,2,3\}$ according to which $p^{l_w}$ ($w\in\{1,2,3\}$) occur in the cycle type of $f_1,\dotsc,f_p$ as follows.
	
	For each $i$, say $f_i\in Y(\{l_w : w\in \nu_i \})$ for some subset $\nu_i\subseteq\{1,2,3\}$, and $\nu_1,\dotsc,\nu_p$ are disjoint with $\nu_1\cup\cdots\cup\nu_p=\{1,2,3\}$. (If $\nu_i=\emptyset$ then $f_i=1$.) In other words, $\nu=\{\nu_i : \nu_i\ne\emptyset\}\in\partition{\{1,2,3\}}$ describes how the cycle lengths $p^{l_w}$ are grouped together to give the cycle types of $f_1,\dotsc,f_p$.
	Given $\nu$, there are $p(p-1)\cdots(p-|\nu|+1)$ many ways of assigning the elements of $\nu$ to $f_1,\dotsc,f_p$.
	
	Hence, by grouping $x\in X(l_1,l_2,l_3)$ according to its associated $\nu\in\partition\{1,2,3\}$, \eqref{eqn:eg1} may be rewritten as follows:
	\begin{small}
		\begin{equation}\label{eqn:eg2}
		\tag{\ref{ex:gamma b=3}(b)}
		\begin{array}{rll}
		\Gamma_{l_1,l_2,l_3;k}(\mathsf{u}) 
		%
		%
		= & p\cdot\Gamma_{l_1,l_2,l_3;k-1}(\mathsf{u}^-) & \quad\nu=\big\{\{1,2,3\}\big\}\\[4pt]
		& +p(p-1)\cdot\Gamma_{l_1,l_2;k-1}(\mathsf{u}^-)\cdot \Gamma_{l_3;k-1}(\mathsf{u}^-) & \quad\nu=\big\{ \{1,2\},\{3\} \big\}\\[4pt]
		& +p(p-1)\cdot\Gamma_{l_1,l_3;k-1}(\mathsf{u}^-)\cdot \Gamma_{l_2;k-1}(\mathsf{u}^-) & \quad\nu=\big\{ \{1,3\},\{2\} \big\}\\[4pt]
		& +p(p-1)\cdot\Gamma_{l_2,l_3;k-1}(\mathsf{u}^-)\cdot \Gamma_{l_1;k-1}(\mathsf{u}^-) & \quad\nu=\big\{ \{2,3\},\{1\} \big\}\\[4pt]
		& +p(p-1)(p-2)\cdot \Gamma_{l_1;k-1}(\mathsf{u}^-)\cdot \Gamma_{l_2;k-1}(\mathsf{u}^-)\cdot \Gamma_{l_3;k-1}(\mathsf{u}^-) & \quad\nu=\big\{ \{1\},\{2\},\{3\} \big\}
		\end{array}
		\end{equation}
	\end{small}

	Let $\mathsf{v}_i=(\mathsf{u}_1,\dotsc,\mathsf{u}_i)$. Iterating \eqref{eqn:eg2} and recalling that $\Gamma_{l_1,l_2,l_3;l_3}(-)=0$, we therefore have that
	\begin{small}
		\begin{equation}\label{eqn:eg3}
		\tag{\ref{ex:gamma b=3}(c)}
		\begin{array}{lll}
		\Gamma_{l_1,l_2,l_3;k}(\mathsf{u}) = \sum_{i=l_3}^{k-1} p^{k-i} [ & (p-1)\cdot \Gamma_{l_1,l_2;i}(\mathsf{v}_i)\cdot \Gamma_{l_3;i}(\mathsf{v}_i) & \quad \nu=\big\{\{1,2\},\{3\}\big\} \\[4pt]
		& + (p-1)\cdot \Gamma_{l_1,l_3;i}(\mathsf{v}_i)\cdot \Gamma_{l_2;i}(\mathsf{v}_i) & \quad \nu=\big\{\{1,3\},\{2\}\big\} \\[4pt]
		& + (p-1)\cdot \Gamma_{l_2,l_3;i}(\mathsf{v}_i)\cdot \Gamma_{l_1;i}(\mathsf{v}_i) & \quad \nu=\big\{\{2,3\},\{1\}\big\} \\[4pt]
		& + (p-1)(p-2)\cdot \Gamma_{l_1;i}(\mathsf{v}_i)\cdot \Gamma_{l_2;i}(\mathsf{v}_i)\cdot  \Gamma_{l_3;i}(\mathsf{v}_i)\ ] & \quad \nu=\big\{\{1\}, \{2\}, \{3\}\big\}
		\end{array}
		\end{equation}
	\end{small}

	We use Lemma~\ref{lem: b=1} and the $b=2$ case of Proposition~\ref{prop: b>1} to deduce that
	\begin{small}
		\begin{equation*}
		\begin{array}{rll}
		\Gamma_{l_1,l_2,l_3;k}(\mathsf{u}) = \sum_{i=l_3}^{k-1} p^{k-i} \big[ & \multicolumn{2}{l}{ (p-1)\cdot p^i\ C_{l_1}(\mathsf{u})\ C_{l_2}(\mathsf{u})\ (p^i-p^{l_2})\cdot p^i\ C_{l_3}(\mathsf{u}) }\\[4pt]
		& \multicolumn{2}{l}{ + (p-1)\cdot p^i\ C_{l_1}(\mathsf{u})\ C_{l_3}(\mathsf{u})\ (p^i-p^{l_3})\cdot p^i\ C_{l_2}(\mathsf{u}) }\\[4pt]
		& \multicolumn{2}{l}{ + (p-1)\cdot p^i\ C_{l_2}(\mathsf{u})\ C_{l_3}(\mathsf{u})\ (p^i-p^{l_3})\cdot p^i\ C_{l_1}(\mathsf{u}) }\\[4pt]
		& \multicolumn{2}{l}{ + (p-1)(p-2)\cdot p^i\ C_{l_1}(\mathsf{u})\cdot p^i\ C_{l_2}(\mathsf{u})\cdot p^i\ C_{l_3}(\mathsf{u})\quad \big] }
		\end{array}
		\end{equation*}
		\begin{equation}\label{eqn:eg4}
		\tag{\ref{ex:gamma b=3}(d)}
		\begin{array}{rll}		
		= \sum_{i=l_3}^{k-1} p^{k-i}\ \cdot C_{l_1}(\mathsf{u})\ C_{l_2}(\mathsf{u})\ C_{l_3}(\mathsf{u}) \big[ & (p-1)\cdot p^{2i}\ (p^i-p^{l_2})  & \qquad \nu=\big\{\{1,2\},\{3\}\big\} \\[4pt]
		& +(p-1)\cdot p^{2i}\ (p^i-p^{l_3}) & \qquad \nu=\big\{\{1,3\},\{2\}\big\} \\[4pt]
		& +(p-1)\cdot p^{2i}\ (p^i-p^{l_3}) & \qquad \nu=\big\{\{2,3\},\{1\}\big\} \\[4pt]
		& +(p-1)(p-2)\cdot p^{3i}\qquad \big] & \qquad \nu=\big\{ \{1\},\{2\},\{3\} \big\}
		\end{array}
		\end{equation}
	\end{small}

	Thus, it remains to prove that $\Sigma_3 = (p^k-p^{l_3})(p^k-p^{l_3}-p^{l_2})$, where we define $\Sigma_3$ as
	\begin{small}
		\begin{equation}\label{eqn:eg5}
		\tag{\ref{ex:gamma b=3}(e)}
		\Sigma_3:= \sum_{i=l_3}^{k-1} p^{-i} \big[ (p-1)p^{2i}(p^i-p^{l_2}) + (p-1)p^{2i}(p^i-p^{l_3}) + (p-1)p^{2i}(p^i-p^{l_3}) + (p-1)(p-2)p^{3i} \big]
		\end{equation}
	\end{small}
	
	Let $Q_i=(p-1)p^i$, and recall that $\sum_{i=l_3}^{k-1} Q_i = p^k-p^{l_3}$ (see~\eqref{eqn:b=2} from the case of $b=2$). Note further that
	\begin{equation}\label{eqn:eg6}
	\tag{\ref{ex:gamma b=3}(f)}
	\sum_{j=l_3}^{i-1} Q_j = p^i-p^{l_3} \quad\text{for all }i\in\mathbb{N}\text{ such that }i>l_3.
	\end{equation}
	We group the terms appearing in $\Sigma_3$ by their associated $\nu\in\partition{\{1,2,3\}}$ according to the partition $\gamma\in\partition{\{2,3\}}$ obtained from $\nu$ by removing 1. That is, we group the terms as follows:
	\begin{itemize}
		\item[(i)] $\gamma = \big\{\{2\},\{3\} \big\}$ : $\nu=\big\{ \{1,2\},\{3\} \big\},\ \nu=\big\{\{1,3\},\{2\} \big\},\ \nu=\big\{\{1\},\{2\},\{3\} \big\}$, and
		\item[(ii)] $\gamma = \big\{ \{2,3\} \big\}$ : $\nu=\big\{ \{2,3\},\{1\} \big\}$.
	\end{itemize}
	By first summing together the terms corresponding to the same $\gamma$, we obtain
	\begin{small}
		\begin{align}\label{eqn:eg7}
		\Sigma_3 &= \sum_{i=l_3}^{k-1} \big[ \underbrace{(p-1) p^i (p^{i+1}-p^{l_3}-p^{l_2})}_{\gamma = \{\{2\},\{3\} \} } + \underbrace{(p-1) p^i (p^i-p^{l_3})}_{\gamma = \{ \{2,3\} \}} \big]\nonumber\\
		&= \sum_{i=l_3}^{k-1}\big[ Q_i\cdot (p^{i+1}-p^{l_3}-p^{l_2}) + (p-1) p^i (p^i-p^{l_3}) \big]\nonumber\\
		&= (-p^{l_3}-p^{l_2})\sum_{i=l_3}^{k-1} Q_i + \sum_{i=l_3}^{k-1} \big[ p^{i+1} Q_i + (p-1) p^i (p^i-p^{l_3}) \big]\nonumber\\
		&= (-p^{l_3}-p^{l_2})(p^k-p^{l_3}) + \sum_{i=l_3}^{k-1} \left[ p^{i+1} Q_i + (p-1) p^i \sum_{j=l_3}^{i-1} Q_j \right]\nonumber\\
		&= (-p^{l_3}-p^{l_2})(p^k-p^{l_3}) + \sum_{h=l_3}^{k-2} Q_h\cdot\left( p^{h+1} + (p-1)\sum_{z=h+1}^{k-1}p^z \right) + p^k Q_{k-1}\nonumber\\
		&= (-p^{l_3}-p^{l_2})(p^k-p^{l_3}) + p^k \sum_{h=l_3}^{k-1} Q_h\nonumber\\
		&= (-p^{l_3}-p^{l_2})(p^k-p^{l_3}) + p^k(p^k-p^{l_3}) = (p^k-p^{l_3})(p^k-p^{l_3}-p^{l_2}),\nonumber\tag{\ref{ex:gamma b=3}(g)}
		\end{align}
	\end{small}
	as desired.
	\hfill$\lozenge$
\end{example}

\bigskip


\end{document}